\theoremstyle{plain}
\newtheorem{theorem}{Theorem}[section]
\newtheorem{corollary}[theorem]{Corollary}
\newtheorem{lemma}[theorem]{Lemma}
\newtheorem{proposition}[theorem]{Proposition}
\theoremstyle{definition}
\newtheorem{definition}[theorem]{Definition}
\newtheorem{example}[theorem]{Example}
\newcommand{\Ae}{{\mathcal A}}
\newcommand{\Ce}{{\mathcal C}}
\newcommand{\De}{{\mathcal D}}
\newcommand{\Ee}{{\mathcal E}}
\newcommand{\Fe}{{\mathcal F}}
\newcommand{\Ke}{{\mathcal K}}
\newcommand{\Le}{{\mathcal L}}
\newcommand{\Se}{{\mathcal S}}
\newcommand{\Xe}{{\mathcal X}}
\newcommand{\Ze}{{\mathcal Z}}
\newcommand{\Sets}{{\bf Set}}
\newcommand{\Ssets}{{\bf sSet}}
\newcommand{\Z}{{\mathbb Z}}
\newcommand{\Sph}{{\mathbb S}}
\newcommand{\Map}{{\rm map}}
\newcommand{\Cn}{C(n)}
\newcommand{\Cnpo}{C(n+1)}
\newcommand{\Cnpt}{C(n+2)}
\newcommand{\Czero}{C(0)}
\newcommand{\Cone}{C(1)}
\newcommand{\Exinf}{{\rm Ex}^{\infty}}
\newcommand{\colim}{{\rm colim}}
\newcommand{\Ar}{{\bf Arr}\,}
\numberwithin{equation}{section}
\begin{document}

\title[Definable orthogonality classes are small]{Definable orthogonality classes 
in accessible categories are small}

\author{Joan Bagaria, Carles Casacuberta, \\ A. R. D. Mathias, and Ji\v{r}\'{\i} Rosick\'{y}}
\thanks{The authors were supported by the Spanish 
Ministry of Science and Innovation under grants MTM2007-63277, MTM2008-03389, MTM2010-15831 and MTM2011-25229, 
by the Generalitat de Catalunya under grants 2005~SGR~606, 2005~SGR~738, 2009~SGR~119 and 2009~SGR~187, and by the 
Ministry of Education of the Czech Republic under project MSM0021622409.
This research was supported through the Research in Pairs programme by the
Mathematisches Forschungsinstitut Oberwolfach in 2008.}

\date{\today }
\subjclass[2000]{03E55, 03C55, 18A40, 18C35, 55P60}
\keywords{}

\begin{abstract}
We lower substantially the strength of the assumptions
needed for the validity of certain results in category theory and 
homotopy theory which were known to follow from Vop\v{e}nka's principle.
We prove that the necessary large-cardinal hypotheses depend on the complexity
of the formulas defining the given classes, in the sense of the
L\'evy hierarchy. For example, the statement that, for a class
$\Se$ of morphisms in a locally presentable category~$\Ce$ of structures,
the orthogonal class of objects $\Se^{\perp}$ is a small-orthogonality class
(hence reflective) can be proved in ZFC if
$\Se$ is~{\boldmath$\Sigma_1$}, while it follows from the existence of a proper class
of supercompact cardinals if $\Se$ is~{\boldmath$\Sigma_2$}, and from the
existence of a proper class of what we call $C(n)$\nobreakdash-extendible cardinals if $\Se$ is 
{\boldmath$\Sigma_{n+2}$} for $n\ge 1$. These cardinals form a new hierarchy, and we show that Vop\v{e}nka's
principle is equivalent to the existence of $C(n)$\nobreakdash-extendible cardinals for all~$n$.

As a consequence, we prove that the existence
of cohomological localizations of simplicial sets, a long-standing open problem in algebraic
topology, is implied by the existence of arbitrarily large supercompact cardinals.
This follows from the fact
that the class of $E^*$\nobreakdash-equiv\-al\-ences is {\boldmath$\Sigma_2$}\nobreakdash-def\-in\-able,
where $E$ denotes a spectrum treated as a parameter. 
In contrast with this fact, the class of $E_*$\nobreakdash-equiv\-al\-ences is 
{\boldmath$\Sigma_1$}\nobreakdash-def\-in\-able, from which it follows (as is well known) 
that the existence of homological localizations is provable in~ZFC.
\end{abstract}

\maketitle

\section*{Introduction}
\label{introduction}

The answers to certain questions in category theory turn out to depend on set theory. 
A~typical example is whether every full limit-closed subcategory of a complete category $\Ce$ is reflective. 
On the one hand, there are counterexamples involving the category of topological spaces 
and continuous functions~\cite{TAR}. 
On the other hand, as explained in~\cite{AR}, an affirmative answer to this question 
for locally presentable categories is implied by
a large-cardinal axiom called Vop\v{e}nka's principle
(stating that, for every proper class of structures of the same type, 
there exists a nontrivial elementary embedding between two of them).

Large cardinals were used in a similar way in~\cite{CSS}
to show that the existence of cohomological localizations, a famous unsolved problem,
follows from Vop\v{e}nka's principle.
Other relevant consequences of Vop\v{e}nka's principle in algebraic topology 
were found in \cite{CCh}, \cite{CGR}, \cite{Ch2}, \cite{RT}.
However, the precise consistency strength of many implications of
this axiom in category theory or homotopy theory is not known,
and in some cases the question of whether such statements are provable in ZFC
remains unanswered. A~relevant step in this direction was made in~\cite{Prz}.

In another direction,
it was pointed out in \cite{BCM} that certain results about accessible categories
that follow from Vop\v{e}nka's principle are still true under much weaker large-cardinal assumptions.
This claim is based on the following finding, which is the subject of the present article:
\emph{the assumptions needed to infer reflectivity
or smallness of orthogonality classes in accessible categories
may depend on the complexity of the formulas in the language of set theory defining these classes}.
Here ``complexity'' is meant in the sense of the L\'evy hierarchy \cite[Ch.\,13]{J2}.
Recall that $\Sigma_n$ formulas and $\Pi_n$ formulas are defined inductively as follows: 
$\Pi_0$ formulas are the same as $\Sigma_0$ formulas, namely formulas in which
all quantifiers are bounded; $\Sigma_{n+1}$ formulas are of the form 
$\exists x\, \varphi$ where $\varphi$ is~$\Pi_n$, and $\Pi_{n+1}$ formulas are of the form 
$\forall x\, \varphi$ where $\varphi$ is~$\Sigma_n$. 

For example, as we prove in this article, 
if $\Se$ is a full limit-closed subcategory of
a locally presentable category~$\Ce$ of structures,
and $\Se$ can be defined with a $\Sigma_2$ formula (possibly with parameters),
then the existence of a proper class of supercompact cardinals suffices
to ensure reflectivity of~$\Se$. Moreover, remarkably, if $\Se$ can be defined with a $\Sigma_1$ formula, then the reflectivity of $\Se$ is provable in~ZFC.

In case of a more complex definition of~$\Se$, 
its~reflectivity follows from the existence of a proper class of what 
we call \emph{$\Cn$\nobreakdash-extendible cardinals}, for some~$n$. These cardinals form a natural hierarchy
ranging from extendible cardinals \cite[20.22]{J2} when $n=1$ to Vop\v{e}nka's principle.
Indeed, as stated in Corollary~\ref{corollary2bis} below, 
Vop\v{e}nka's principle is equivalent to the claim that
there exists a $\Cn$\nobreakdash-extendible cardinal for every~$n<\omega$.
We denote by $\Cn$ the proper class of cardinals $\alpha$ such that $V_{\alpha}$
is a $\Sigma_n$\nobreakdash-elementary submodel of the set-theoretic universe~$V$, and say that a cardinal $\kappa$
is $\Cn$\nobreakdash-extendible if $\kappa\in\Cn$ and for all $\lambda > \kappa$ in $\Cn$
there is an elementary embedding $j\colon V_{\lambda}\to V_{\mu}$ for some $\mu\in\Cn$
with critical point~$\kappa$, such that $j(\kappa)\in\Cn$ and $j(\kappa)>\lambda$.

By way of this approach, we prove that the existence of cohomological
localizations of simplicial sets follows from the existence of a proper class of supercompact cardinals.
This result uses the fact, proved in Theorem~\ref{cohomology} below,
that for every (Bousfield--Friedlander) spectrum $E$ the class of $E^*$\nobreakdash-acyclic 
simplicial sets (where $E^*$ denotes the reduced cohomology theory represented by~$E$) can be defined by means 
of a $\Sigma_2$ formula with $E$ as a parameter.
However, the class of $E_*$\nobreakdash-acyclic simplicial sets (where $E_*$ now denotes
homology) can be defined with a $\Sigma_1$ formula. This is consistent with the fact that the
existence of homological localizations can be proved in~ZFC, as done indeed by Bousfield
in~\cite{BouTop}; see also~\cite{AF}.

The reason why classes of homology acyclics have lower complexity than classes of cohomology acyclics is that, for a fibrant simplicial set $Y$ with basepoint, the statement ``all pointed maps $f\colon \Sph^n\to Y$ are nullhomotopic'', where $\Sph^n$ is the simplicial $n$\nobreakdash-sphere, is absolute between transitive models of~ZFC,
since a simplicial map $\Sph^n\to Y$ is determined by a single $n$\nobreakdash-simplex of $Y$ satisfying certain conditions expressible in terms of $Y$ with bounded quantifiers; cf.\,\cite[3.6]{May}.
However, if $X$ and $Y$ are simplicial sets with basepoints $x_0$ and $y_0$,
then the statement ``all pointed maps $f\colon X\to Y$ are nullhomotopic''
involves unbounded quantifiers, since it is formalized, for example, by stating that
\[
\forall f \, (\mbox{$f$ is a map from $X$ to $Y$} \to \exists h\, (\mbox{$h$ is a homotopy from $f$ to $y_0$})).
\]

Therefore, for a spectrum~$E$, there might exist $E^*$\nobreakdash-acyclic simplicial sets in a 
transitive model of ZFC containing $E$ that fail to be $E^*$\nobreakdash-acyclic in some larger model,
while the class of $E_*$\nobreakdash-acyclic simplicial sets is absolute.
See Section~\ref{cohomologicallocalizations} for a detailed discussion of these facts.

Another consequence of this article is that the main theorem of \cite{BCM}
can now be proved for reflections, not necessarily epireflections.
Thus, if there are arbitrarily large supercompact cardinals, then every reflection $L$
on an accessible category of structures is an $\Fe$\nobreakdash-reflection for some set of morphisms~$\Fe$,
provided that the class of $L$\nobreakdash-equivalences is {\boldmath$\Sigma_{2}$};
see Corollary~\ref{onemore} below.
(Boldface types {\boldmath$\Sigma_n$} or {\boldmath$\Pi_n$}
are used to denote the fact that the corresponding formulas may contain parameters.)

We also prove that the Freyd--Kelly orthogonal subcategory problem \cite{FK},
asking if $\Se^{\perp}$ is reflective for a class of morphisms~$\Se$ in a suitable category,
has an affirmative answer in ZFC for {\boldmath$\Sigma_1$} classes in locally presentable categories of structures.
It is also true for {\boldmath$\Sigma_2$} classes if a proper class of super\-compact
cardinals is assumed to exist, and for {\boldmath$\Sigma_{n+2}$} classes if there is a proper class
of $\Cn$\nobreakdash-exten\-dible cardinals for~$n\ge 1$. We say that $\Se$ is \emph{definable with 
sufficiently low complexity} to encompass all these cases in a single phrase.

Essentially the same arguments hold in the homotopy category of simplicial sets,
hence yielding a simpler and more accurate answer than in \cite{CSS} (where Vop\v{e}nka's principle
was used) to Farjoun's question in \cite{Far1} of whether every homotopy reflection
on simplicial sets is an $f$\nobreakdash-localization for some map~$f$.
Localizations with respect
to sets of maps were constructed in \cite{BouJPAA}, \cite{Far2}, \cite{Hir}, and the extension
to proper classes of maps was carried out in \cite{CSS} using Vop\v{e}nka's principle.
Here we prove that localizations with respect to proper classes of maps exist whenever
the given classes are definable with sufficiently low complexity.

We warn the reader that
in this article, as well as in~\cite{BCM}, complexity of classes
of objects or morphisms in an accessible category $\Ce$
is meant under the assumption that $\Ce$ is accessibly embedded into a category of structures.
This happens canonically with the category of simplicial sets and with the category
of Bousfield--Friedlander spectra, or, more generally, with categories of models of
basic theories in any language.
Terminology and background can be found in \cite[5.B]{AR}, where it is proved that every
accessible category is equivalent to one which is accessibly embedded into a
category of structures.

\bigskip

\noindent
\textbf{Acknowledgements} \
We are much indebted to the referee for a deep and careful reading of the manuscript and a number of pertinent corrections.

\section{Categories of structures}
\label{categoriesofstructures}

Most of the results in this article refer to categories of \emph{structures} (possibly many-sorted,
in a language of any cardinality). For the convenience of the reader, we start by recalling 
terminology and background about structures and models in this section.
Additional details can be found, among many other sources, in~\cite[Ch.\,5]{AR} and~\cite[Ch.\,12]{J2}.

For a regular cardinal~$\lambda$,
a \emph{$\lambda$\nobreakdash-ary $S$\nobreakdash-sorted signature} $\Sigma$
consists of a set $S$ of \emph{sorts}, a set $\Sigma_{\rm op}$ of \textit{operation symbols},
another set $\Sigma_{\rm rel}$ of \textit{relation symbols}, and an \emph{arity} function that assigns to each
operation symbol an ordinal $\alpha<\lambda$, a sequence $\langle s_i : i\in\alpha \rangle$ of \emph{input sorts} 
and an \emph{output sort} $s\in S$,
and to each relation symbol an ordinal $\beta<\lambda$ and
a sequence of sorts $\langle s_j : j\in\beta \rangle$.
An operation symbol with $\alpha=\emptyset$ is called a \emph{constant symbol}.
A~signature $\Sigma$ is called \emph{operational} if $\Sigma_{\rm rel}=\emptyset$ and \emph{relational} if $\Sigma_{\rm op}=\emptyset$.

Given an $S$\nobreakdash-sorted signature~$\Sigma$, a \emph{$\Sigma$\nobreakdash-structure} is a triple
\[
X=\left\langle \{ X_s : s\in S\},\, \{\sigma_X : \sigma\in\Sigma_{\rm op}\},\, \{\rho_X : \rho\in\Sigma_{\rm rel}\} \right\rangle
\]
consisting of an \emph{underlying $S$\nobreakdash-sorted set} or \emph{universe},  denoted by $\{X_s : s\in S\}$ or $(X_s)_{s\in S}$, together with a function 
\[
\sigma_X\colon \prod_{i\in\alpha}X_{s_i}\longrightarrow X_s
\]
for each operation symbol $\sigma\in\Sigma_{\rm op}$ 
of arity $\langle s_i : i\in\alpha \rangle\to s$ (including
a distinguished element of $X_s$ for each constant symbol of sort~$s$),
and a set 
\[
\rho_X\subseteq \prod_{j\in\beta}X_{s_j}
\]
for each relation symbol $\rho\in\Sigma_{\rm rel}$ of arity $\langle s_j : j\in\beta\rangle$. 

A \emph{homomorphism} $f\colon X\to Y$ between two
$\Sigma$\nobreakdash-structures is an $S$\nobreakdash-sorted function 
$( f_s \colon X_s\to Y_s)_{s\in S}$ preserving operations and relations. 
For each signature~$\Sigma$, the category of $\Sigma$\nobreakdash-structures and
their homomorphisms will be denoted by~${\bf Str}\,\Sigma$.

Given a $\lambda$\nobreakdash-ary $S$\nobreakdash-sorted signature~$\Sigma$, 
the \emph{language} $\Le_{\lambda}(\Sigma)$ consists of sets of \emph{variables}, \emph{terms}, and \emph{formulas},
which are defined as follows. There is a family $W=\{ W_s : s\in S \}$ 
of sets of cardinality~$\lambda$, the elements of $W_s$ being \emph{variables} of sort~$s$. 
One defines \emph{terms} by declaring that each variable is a term and, for each operation symbol $\sigma\in\Sigma_{\rm op}$ of arity 
$\langle s_i : i\in\alpha \rangle\to s$ and each collection of terms $\tau_i$
of sort~$s_i$, the expression $\sigma(\tau_i)_{i\in\alpha}$
is a term of sort~$s$. \emph{Atomic formulas} are expressions of the form $\tau_1=\tau_2$ and
$\rho(\tau_j)_{j\in\beta}$, where $\rho\in\Sigma_{\rm rel}$ is a relation symbol of arity 
$\langle s_j : j\in\beta\rangle$
and each $\tau_j$ is a term of sort $s_j$ with $j\in\beta$. \emph{Formulas} are built in finitely many steps from the atomic formulas 
by means of logical connectives and quantifiers. Thus, if $\{ \varphi_i :i\in I \}$ are formulas and $|I|<\lambda$,
then so are the conjunction $\bigwedge_{i\in I}\varphi_i$ and the disjunction $\bigvee_{i\in I}\varphi_i$.
Quantification is allowed over sets of variables of cardinality smaller than~$\lambda$; that is,
$\left(\forall (x_i)_{i\in I}\right) \varphi$ and $\left(\exists (x_i)_{i\in I}\right) \varphi$ are formulas if 
$\varphi$ is a formula and $|I|<\lambda$. 

Variables that appear unquantified in a formula are called \emph{free}. If a formula is denoted by
$\varphi(x_i)_{i\in I}$, it is meant that each $x_i$ is a free variable.

Each language $\Le_{\lambda}(\Sigma)$ determines a \emph{satisfaction relation}
between $\Sigma$\nobreakdash-struc\-tures and formulas with an assignment for their free variables.
If $\varphi(x_i)_{i\in I}$ is a formula where each $x_i$ is a free variable of sort $s_i$ 
and $X$ is a $\Sigma$\nobreakdash-structure, a \emph{variable assignment}, denoted by $x_i\mapsto a_i$, 
is a function $a\colon I\to\cup_{s\in S}\,X_s$ such that $a(i)\in X_{s_i}$ for all~$i$.
Satisfaction of a formula $\varphi$ in a $\Sigma$\nobreakdash-structure $X$
is defined inductively, starting with the atomic formulas and quantifying over subsets of $\cup_{s\in S}\,X_s$
of cardinality smaller than~$\lambda$; see~\cite[\S 5.26]{AR} for details.
We write $X\models\varphi(a_i)_{i\in I}$ if $\varphi$ is satisfied in $X$ under 
an assignment $x_i\mapsto a_i$ for all its free variables~$x_i$.

A formula without free variables is called
a \emph{sentence}. A set of sentences is called a \emph{theory}.
A \emph{model} of a theory $T$ in a language $\Le_{\lambda}(\Sigma)$ is a $\Sigma$\nobreakdash-structure satisfying all
sentences of~$T$. For each theory~$T$, we denote by ${\bf Mod}\,T$ the full subcategory of
${\bf Str}\,\Sigma$ consisting of all models of~$T$.

A language $\Le_{\lambda}(\Sigma)$ is called \emph{finitary} if $\lambda=\omega$ (the least
infinite cardinal); otherwise it is \emph{infinitary}.
An especially important finitary language is the \emph{language of set theory}. This is the 
first-order finitary language corresponding to the signature with one sort, namely ``sets'', 
and one binary relation symbol (``membership''). Hence the atomic formulas 
are $x=y$ and $x\in y$, where $x$ and $y$ are sets.

Define, recursively on the class of ordinals, $V_0=\emptyset$,
$V_{\alpha +1}={\mathcal P}(V_{\alpha})$ for all~$\alpha$, 
where ${\mathcal{P}}$ denotes the power-set operation, and 
$V_{\lambda}=\bigcup_{\alpha < \lambda}V_{\alpha}$ 
if $\lambda$ is a limit ordinal. 
Then every set is an element of some~$V_{\alpha}$; see~\cite[Lemma~9.3]{J1} or~\cite[Lemma~6.3]{J2}.
The \emph{rank} of a set~$X$ is the least ordinal $\alpha$ such that $X\in V_{\alpha +1}$. 
Hence $V_{\alpha}$ is the set of all sets whose rank is less than~$\alpha$.
The \emph{universe} $V$ of all sets is the union of $V_{\alpha}$ for all ordinals~$\alpha$.

Everything in this article is formulated in ZFC (Zermelo--Fraenkel set theory with the axiom of choice). 
Thus, a \emph{class} consists of all sets for which a certain formula of the language of set theory is satisfied,
possibly with parameters. More precisely, a class $\Ce$ is \emph{defined} by a formula
$\varphi(x,y_1,\dots,y_n)$ with \emph{parameters} $p_1,\dots,p_n$ if 
\[
\Ce=\{x : \varphi(x,p_1,\dots,p_n)\}, 
\]
where satisfaction, if unspecified, is meant in the universe~$V$.
The sets $p_1,\dots,p_n$ are fixed values of $y_1,\dots,y_n$ under every variable assignment.
To simplify the notation, we often replace $p_1,\dots,p_n$ by a single parameter $p=\{p_1,\dots,p_n\}$.
A class which is not a set is called a \emph{proper~class}.
Each set $A$ is definable with $A$ itself as a parameter by \hbox{$A=\{x : x\in A\}$}.

In this article, a \emph{model of ZFC} will be a pair $\langle M,\in\rangle$ where $M$ is a
set or a proper class and $\in$ is the restriction of the membership relation to~$M$,
in which the formalized ZFC axioms are satisfied. Thus, if we neglect the fact that $M$ can be a proper class, 
we may view $\langle M,\in\rangle$ as a $\Sigma$\nobreakdash-structure where $\Sigma$ is the relational signature
of the language of set theory, and in fact a model of the theory consisting of the formalized ZFC axioms.
In particular, $\langle V,\in\rangle$ itself is such a model.

A class $M$ is \emph{transitive} if every element of an element of $M$ is an element of~$M$.
We shall always assume that models of ZFC are transitive,
but not necessarily inner (a model is called \emph{inner} 
if it is transitive and contains all the ordinals).

\section{The L\'evy hierarchy}
\label{Levyhierarchy}

In this section we specialize to the language of set theory.
Thus, given two classes $M\subseteq N$, 
we say that a formula $\varphi(x_1,\ldots,x_k)$
is \emph{absolute between $M$ and~$N$} if, for all $a_1,\ldots,a_k$ in~$M$,
\[
N\models \varphi(a_1,\ldots,a_k)\; \mbox{ if and only if }\; M\models \varphi(a_1,\ldots,a_k).
\]

We say that a formula $\varphi(x_1,\ldots,x_k)$ is \emph{upward absolute} for transitive
models of some theory~$T$ if,
given any two such models $M\subseteq N$ and given $a_1,\ldots,a_k\in M$ for which
$\varphi(a_1,\ldots,a_k)$ is true in~$M$, 
$\varphi(a_1,\ldots,a_k)$ is also true in~$N$.
And we say that $\varphi$ is \emph{downward absolute}
if, in the same situation, if $\varphi(a_1,\dots,a_k)$ holds in $N$ then it holds in~$M$.
A~formula is \emph{absolute} if it is both upward and downward absolute.
If $T$ is unspecified, then it should be understood that $T$ is by default the set of
all formalized ZFC axioms. If it is meant, on the contrary, that $T=\emptyset$, then we speak of absoluteness between transitive classes.

A class $\Ce$ is \emph{upward absolute} between transitive classes $M\subseteq N$ if it is definable, possibly with a set $p$ of parameters, by a formula that is upward absolute between $M$ and~$N$. \emph{Downward absolute} classes are defined analogously, and we say that $\Ce$ is \emph{absolute} between $M$ and $N$ if it is upward absolute and downward absolute, hence allowing the possibility that 
\[
\Ce=\{x:\varphi(x,p)\}=\{x:\psi(x,p)\}
\]
where $\varphi$ is upward absolute and $\psi$ is downward absolute. In this situation, $N\models x\in\Ce$ if and only if $M\models x\in\Ce$, assuming that $p\in M$.

The following terminology is due to L\'evy; see \cite[Ch.\,13]{J2}.
A~formula of the language of set theory is said to be 
$\Sigma_0$ if all its quantifiers are bounded, that is, of the form $\exists x\in a$ 
or $\forall x\in a$.
Then \emph{$\Sigma_n$ formulas} and \emph{$\Pi_n$ formulas} are defined inductively as follows: 
$\Pi_0$ formulas are the same as $\Sigma_0$ formulas; $\Sigma_{n+1}$ formulas are of the form 
$(\exists x_1\dots x_k)\, \varphi$, where $\varphi$ is $\Pi_n$; 
and $\Pi_{n+1}$ formulas are of the form $(\forall x_1\dots x_k)\, \varphi$, where $\varphi$ is~$\Sigma_n$. 
We say that a formula is $\Sigma_n\wedge\Pi_n$ if it is a conjunction of a $\Sigma_n$ formula and a $\Pi_n$ formula.

Classes can be defined by distinct formulas and, more generally, properties and
mathematical statements can be formalized in the language of set theory in many different ways.
We say that a class $\Ce$ is \emph{{\boldmath$\Sigma_n$}\nobreakdash-definable} (or, shortly, that $\Ce$ is~{\boldmath$\Sigma_n$}) 
if there is a $\Sigma_n$ formula $\varphi(x,y)$ such that
$\Ce=\{ x : \varphi (x,p) \}$ for a set $p$ of parameters.
Similarly, a class is {\boldmath$\Pi_n$}
if it can be defined by some $\Pi_n$ formula with parameters.
A~class is called {\boldmath$\Delta_n$} if it is both {\boldmath$\Sigma_n$} and~{\boldmath$\Pi_n$}.
For notational convenience, if no parameters are involved, then we write that a class $\Ce$ is $\Sigma_n$, $\Pi_n$ or $\Delta_n$, using lightface types. 

The same terminology is used with statements or informal expressions; for example, ``$\lambda$ is a cardinal'' is a $\Pi_1$ statement \cite[Lemma~13.13]{J2}, while ``$f$ is a function'', ``$\alpha$~is an ordinal'' or ``$\omega$~is the least nonzero limit ordinal'' are $\Delta_0$ statements \cite[Lemma~12.10]{J2}.

If a class $\Ce$ is {\boldmath$\Sigma_1$} with a set $p$ of parameters, 
then it is upward absolute for transitive classes containing~$p$.
In fact, given a $\Sigma_1$ formula $\exists x\,\varphi(x,y)$ where $\varphi$ is $\Sigma_0$
and given a set $p$ of parameters, suppose that
$M\subseteq N$ are transitive classes with $p\in M$. Then, if 
$M\models \exists x\,\varphi(x,p)$, we may infer that $N\models \exists x\,\varphi(x,p)$
as well, since if $a\in M$ witnesses that $\varphi(a,p)$ holds in~$M$, then $a\in N$ and 
$\varphi(a,p)$ also holds in~$N$, since $\varphi$ is absolute.

Conversely, if a class $\Ce$ is upward absolute for transitive models of some finite fragment
${\rm ZFC}^*$ of~ZFC, then it is~{\boldmath$\Sigma_1$}. 
To prove this claim, suppose that $\Ce$ is defined by a formula $\varphi(x,y)$ that is upward absolute for transitive models
of ${\rm ZFC}^*$ with a set $p$ of parameters. Then $\Ce$ is also defined by the following $\Sigma_1$ formula:
\begin{equation}
\label{trick}
\exists M\, [\mbox{$M$ is transitive} \, \wedge \, \{x,p\}\subset M \, \wedge \, 
M\models \left(\varphi(x,p)\wedge\left(\textstyle\bigwedge {\rm ZFC}^*\right)\right)].
\end{equation}
Indeed, if $a\in\Ce$ then $\varphi(a,p)$ holds in~$V$, and it follows from the 
Reflection Principle \cite[Theorem~12.14]{J2} that there is an ordinal $\alpha$ with $\{a,p\}\in V_\alpha$ 
such that $V_\alpha \models \varphi(a,p)$ and all the sentences in the finite set ${\rm ZFC}^*$ 
are satisfied in~$V_{\alpha}$, so $V_\alpha$ witnesses~\eqref{trick}. 
And, if a set $M$ witnesses \eqref{trick} for some variable assignment $x\mapsto a$, 
then, since $\varphi(x,y)$ is upward absolute for transitive models of~${\rm ZFC}^*$, 
we infer that $\varphi(a,p)$ holds in~$V$, that is, $a\in \Ce$.

Similarly, if a class $\Ce$ is defined by a $\Pi_1$ formula with parameters, then it is
downward absolute for transitive classes containing the parameters, 
and, if $\Ce$ is downward absolute for transitive models of some finite
fragment of~ZFC, then it is~{\boldmath$\Pi_1$}, analogously as in~\eqref{trick}.
We conclude that {\boldmath$\Delta_1$}~classes are absolute for transitive classes containing the parameters.

The following are examples of nonabsoluteness which will be relevant in this article.

\begin{example}
The class of topological spaces is~$\Pi_1$, since
the union of every collection of open sets must be open.
Thus, a topology on a set $X$ in some model of ZFC
may fail to be a topology on $X$ in a larger model.
However, the class of simplicial sets is~$\Delta_0$ 
(see Section~\ref{cohomologicallocalizations}).
\end{example}

\begin{example}
\label{example2}
Let $\mathcal{C}$ be the class of all abelian groups of the form 
$\Z^{\kappa}$, where $\kappa$ is a cardinal. 
Then $A\in \mathcal{C}$ if and only if
\[
\exists x\,(\mbox{$x$ is a cardinal}\wedge \forall y\,(y\in A\leftrightarrow \mbox{$y$ is a function
from $x$ to $\Z$})),
\]
which is a $\Sigma_2$ formula, since the expression written within the outer parentheses is~$\Pi_1$.
In every model of ZFC with measurable cardinals, the following sentence is true:
\[
\begin{array}{c}
\exists \kappa\, \exists f\, (\mbox{$\kappa$ is an infinite cardinal} \, \wedge \, 
\mbox{$f$ is a group homomorphism} \\[0.1cm] \mbox{from $\Z^{\kappa}$ to $\Z$} \, \wedge \,
f(\Z^{<\kappa})=0 \, \wedge \, f\ne 0),
\end{array}
\]
while if this holds then the smallest $\kappa$ with this property is measurable, according to~\cite{E}; 
see \cite{EM} for further details.
Therefore, this sentence is false in a model of ZFC without measurable cardinals while it is true
in a model of ZFC with measurable cardinals.
\end{example}

\begin{example}
\label{example3}
For a cardinal $\lambda$ and a set~$X$, we denote by ${\mathcal P}_{\lambda}(X)$
the set of all subsets of $X$ whose cardinality is smaller than~$\lambda$.
Note first that, although the statement ``$A$ is a subset of~$B$'' is~$\Delta_0$,
the statement ``$A$ is the set of all subsets of~$B$''
is formalized with the following $\Pi_1$ formula: 
\[
\forall a\in A\,(a\subseteq B) \, \wedge \, \forall x\,(x\subseteq B \to x\in A).
\]
This statement cannot be formalized with any upward absolute formula, since,
if we pick a countable transitive model $M$ of~ZFC and $A$ is the set of all subsets of the natural numbers $\mathbb{N}$ in~$M$,
then $A$ cannot be the set of all subsets of $\mathbb{N}$ in the universe~$V$, since $A$ is countable.

The assertion ``$x$ is finite'' is $\Delta_1$, since
it is equivalent to the statement that there exists a bijection between $x$
and a finite ordinal (which is~$\Sigma_1$) and it is also equivalent
to the statement that every injective function from $x$ to itself is surjective
(which is~$\Pi_1$). Note also that, if a set $x$ is finite and each of its
elements belongs to a model $M$ of~ZFC, then we may infer that $x\in M$ 
using the pairing and union axioms.
 From this fact it follows that the statement $A={\mathcal P}_{\omega}(B)$
---that is, ``$A$ is the set of all finite subsets of~$B$''---
is absolute for transitive models of a suitable finite fragment of~ZFC, hence~$\Delta_1$.
Nevertheless,
if $M$ and $N$ are just transitive classes with 
$M\subset N$ and $B\in M$, it can happen that the claim
``${\mathcal P}_{\omega}(B)$ exists'' is true in $N$ but not in~$M$, as discussed in~\cite[Sections 5 and~6]{ARDM}.

For a cardinal $\lambda>\omega$, the expression $A={\mathcal P}_{\lambda}(B)$ 
can be formalized by claiming that $\lambda$ is a cardinal and
$\forall x\,(x\in A \leftrightarrow (x\subseteq B\,\wedge\,|x|<\lambda))$.
The clause $|x|<\lambda$ is, on one hand, equivalent to
\[
(\exists\alpha\in\lambda)\,\exists f\,(\mbox{$f$ is a bijective function from $x$ to $\alpha$}),
\]
which is~$\Sigma_1$, and on the other hand it is the negation of $\lambda\le |x|$, hence
equivalent to the $\Pi_1$ claim that there is no injective function from $\lambda$ to~$x$. 
Therefore, the statement $A={\mathcal P}_{\lambda}(B)$ is~$\Pi_1$.
\end{example}

\section{Complexity of categories}
\label{definablecategories}

In order to simplify expressions, if $\Ce$ is a category
we shall denote by $X\in\Ce$ the statement that $X$ is an object of $\Ce$
and by $f\in\Ce(X,Y)$ the claim that $X$ and $Y$ are objects of $\Ce$
and $f$ is a morphism from $X$ to~$Y$.

\begin{definition}
\label{definablecategory}
{\rm
For $n\ge 0$,
a category $\Ce$ is called {\boldmath$\Sigma_n$}\nobreakdash-\emph{definable} (shortly,~{\boldmath$\Sigma_n$}) with a set $p$ of parameters
if there is a $\Sigma_n$ formula $\varphi$ of the language
of set theory such that $\varphi(X,Y,Z,f,g,h,i,p)$
is true if and only if $f\in\Ce(X,Y)$, $g\in\Ce(Y,Z)$, $h$ is the composite of $f$ and~$g$, and $i$ is the identity of~$X$.
}
\end{definition}

If a category $\Ce$ is {\boldmath$\Sigma_n$} with a set $p$ of parameters,
then there are $\Sigma_n$ formulas
$\psi_{\rm Ob}(x,y)$ and $\psi_{\rm Mor}(x,y,z,t)$
such that
$\psi_{\rm Ob}(X,p)$ is true if and only if $X\in\Ce$
and
$\psi_{\rm Mor}(X,Y,f,p)$ is true if and only if $f\in\Ce(X,Y)$.
Specifically, from a formula $\varphi$ as in~Definition~\ref{definablecategory}
we can choose $\psi_{\rm Mor}(x,y,z,t)$ to be
$
\exists i\,\varphi(x,x,y,i,z,z,i,t)
$,
and next choose $\psi_{\rm Ob}(x,y)$ to be $\exists z\,\psi_{\rm Mor}(x,x,z,y)$.

If $\Ce$ is~{\boldmath$\Sigma_n$}, then the statement $F=\Ce(X,Y)$ 
is formalized with the following $\Sigma_n\wedge\Pi_{n}$ formula:
\[
(\forall f\in F)\, f\in\Ce(X,Y)\, \wedge\, \forall g\, (g\in\Ce(X,Y)\to g\in F).
\]

We say that a category is \emph{{\boldmath$\Pi_n$}}
for $n\ge 0$ if there are $\Pi_n$ formulas
defining its objects, morphisms, composition and identities. 
A category will be called {\boldmath$\Delta_n$}
if it is both {\boldmath$\Sigma_n$} and~{\boldmath$\Pi_n$}.

A~category is \emph{upward absolute} for transitive classes if its objects, morphisms, composition and identities can be defined by formulas that are upward absolute for transitive classes. \emph{Downward absolute} categories are defined in the same way, and a category will be called \emph{absolute} if it is both upward absolute and downward absolute. Thus, {\boldmath$\Delta_1$} categories are absolute for transitive classes containing the involved parameters.

If $\Ce$ is a subcategory of the category of sets,
then composition and identities in $\Ce$ are prescribed by those of sets. 
Therefore, the complexity of a subcategory of sets is the same if defined
as in Definition~\ref{definablecategory} or if simply treated as a class
of sets together with a class of functions.

Many important categories which cannot be embedded into $\Sets$ have nevertheless
a complexity in our sense. For example, the homotopy category of simplicial sets 
cannot be embedded into $\Sets$ according to~\cite{F}, and yet it can be defined
with a~$\Sigma_2$ formula, since $\mu$ is a morphism from $X$ to $Y$ if and only if there exists 
a simplicial map $f$ from $X$ to a fibrant replacement of $Y$ 
such that $\mu$ is the set of all simplicial maps homotopic to~$f$,
and composition is defined accordingly
(fibrant replacements are discussed in Section~\ref{cohomologicallocalizations}).

For a category $\Ce$ and an object $A$ of~$\Ce$, we denote by $(\Ce\downarrow A)$
the \emph{slice category} whose objects are pairs $\langle X,f\rangle$ where
$f\in\Ce(X,A)$ and whose morphisms $\langle X,f\rangle\to\langle X',f'\rangle$
are morphisms $g\in\Ce(X,X')$ such that $f=f'\circ g$.
Dually, the objects of the \emph{coslice category} 
$(A\downarrow\Ce)$ are pairs $\langle X,f\rangle$ where $f\in\Ce(A,X)$, with 
corresponding morphisms.
Both $(\Ce\downarrow A)$ and $(A\downarrow\Ce)$ are definable with the same complexity as~$\Ce$,
with $A$ as an additional parameter.
Slice and coslice categories are (non-full) subcategories of the
\emph{category of arrows} $\Ar\Ce$, whose objects are triples $\langle A,B,f\rangle$ 
with $f\in\Ce(A,B)$ and where a morphism $f\to g$ is a commutative square
\[
\xymatrix{ 
A\ar[r]\ar[d]_f & C\ar[d]^g \\
B\ar[r] & D\rlap{.}
}
\]

\begin{lemma}
\label{relativizing}
If $\Sigma$ is any signature, then there is a signature $\Sigma'$ such that
$\Ar{\bf Str}\,\Sigma$ fully embeds into ${\bf Str}\,\Sigma'$, and,
if $A$ is a $\Sigma$\nobreakdash-structure, then there is a signature
$\Sigma''$ such that $(A\downarrow{\bf Str}\,\Sigma)$ fully embeds into ${\bf Str}\,\Sigma''$.
In both cases, the embedding preserves complexity.
\end{lemma}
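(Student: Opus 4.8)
The plan is, in each case, to ``flatten'' the relevant morphism category into a single category of structures by adjoining symbols that record the extra datum (the map~$f$), and then to check that the flattening and its inverse are given by formulas with bounded quantifiers, so that the L\'evy complexity of a definable subcategory is transported faithfully in both directions.

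For the arrow category, let $\Sigma'$ be the signature whose sort set is the disjoint union $S_0\sqcup S_1$ of two copies of the sort set $S$ of~$\Sigma$; whose operation and relation symbols are two disjoint copies of those of~$\Sigma$ --- the first batch acting on the $S_0$-sorts, the second on the $S_1$-sorts, with the inherited arities --- together with one new unary operation symbol $u_s$ of arity $\langle s_0\rangle\to s_1$ for each $s\in S$ (so $\Sigma'$ is $\lambda$-ary if $\Sigma$ is). To an object $\langle A,B,f\rangle$ of $\Ar{\bf Str}\,\Sigma$ associate the $\Sigma'$-structure $E\langle A,B,f\rangle$ whose $s_0$-component is $A_s$, whose $s_1$-component is $B_s$, in which the two copies of the symbols of $\Sigma$ are interpreted as in $A$ and in $B$ respectively, and in which $u_s$ is interpreted as the $s$-component $f_s\colon A_s\to B_s$ of~$f$; to a morphism of $\Ar{\bf Str}\,\Sigma$, that is, a commutative square consisting of two $\Sigma$-homomorphisms, associate the evident $(S_0\sqcup S_1)$-sorted map. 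This $E$ is visibly a faithful functor, injective on objects, and it is full because a $\Sigma'$-homomorphism $E\langle A,B,f\rangle\to E\langle C,D,g\rangle$ is precisely a pair of maps that preserve every copy of the operations and relations of $\Sigma$ --- hence are $\Sigma$-homomorphisms on the $S_0$- and $S_1$-halves --- and that preserve the symbols $u_s$, which says exactly that the resulting square commutes.

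For the coslice category, with $A$ a fixed $\Sigma$-structure, let $\Sigma''$ be $\Sigma$ enlarged by one new constant symbol $c_a$ of sort~$s$ for each $s\in S$ and each $a\in A_s$ (again $\lambda$-ary if $\Sigma$ is). To $\langle X,f\rangle$ in $(A\downarrow{\bf Str}\,\Sigma)$ associate the $\Sigma''$-structure that is $X$ as a $\Sigma$-structure and interprets each $c_a$ as $f(a)\in X_s$, and to a coslice morphism $g$ associate $g$ itself. For $\Sigma$-homomorphisms $g\colon X\to X'$ one has $g\circ f=f'$ if and only if $g$ carries the interpretation of each $c_a$ in $X$ to its interpretation in $X'$, that is, if and only if $g$ is a $\Sigma''$-homomorphism between the associated structures; hence this $E$ is again full, faithful and injective on objects.

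It remains to see that the embeddings preserve complexity, and this is the part requiring care. In both cases the maps $Y\mapsto E(Y)$ on objects and on morphisms, their inverses on the image, and the predicate ``$M$ lies in the image of $E$'' --- for the arrow case, ``$M$ is a $\Sigma'$-structure and $(u_s^M)_{s\in S}$ is a $\Sigma$-homomorphism from the $S_0$-reduct of $M$ to its $S_1$-reduct'', and for the coslice case, ``$M$ is a $\Sigma''$-structure and $a\mapsto c_a^M$ is a $\Sigma$-homomorphism from $A$ to the $\Sigma$-reduct of $M$'' --- are expressible by $\Sigma_0$ formulas with $\Sigma$, and (in the coslice case) $A$, as parameters, since each merely rearranges, or quantifies boundedly over, the data of $M$ and the set-many symbols of $\Sigma$. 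Granting this, if a subcategory $\De$ of $\Ar{\bf Str}\,\Sigma$ or of $(A\downarrow{\bf Str}\,\Sigma)$ is defined, in the sense of Definition~\ref{definablecategory}, by $\Sigma_n$ formulas (respectively $\Pi_n$ formulas), then substituting these $\Sigma_0$ translations into those formulas, and using that the L\'evy classes $\Sigma_n$ and $\Pi_n$ are closed under bounded quantification and under conjunction with $\Sigma_0$ formulas, shows that $E(\De)$ is a subcategory of ${\bf Str}\,\Sigma'$ (respectively ${\bf Str}\,\Sigma''$) of the same complexity; conversely, composing the formulas defining $E(\De)$ with the $\Sigma_0$ description of $E$ recovers $\De$ with the same complexity. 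The only genuinely delicate point, then, is to confirm that ``$M$ is a $\Sigma$-structure'' and ``$h$ is a $\Sigma$-homomorphism'' are indeed $\Sigma_0$ for a fixed set-sized signature~$\Sigma$ used as a parameter, so that the equivalence of categories $\Ar{\bf Str}\,\Sigma\simeq E(\Ar{\bf Str}\,\Sigma)$, and its coslice analogue, carry the L\'evy hierarchy in both directions.
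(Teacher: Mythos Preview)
Your construction is essentially the paper's: for the arrow category you duplicate the sorts and symbols and add unary operations $u_s$ encoding the arrow, exactly as the paper does with its $\mu_s$; for the coslice you adjoin one symbol per element of~$A$. The only visible difference is that you use \emph{constant} symbols $c_a$ while the paper uses unary \emph{relation} symbols $\rho_a$, restricted afterwards to singletons; both work, and your choice is arguably tidier since the singleton condition is built in.

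One comment on your closing worry. The predicate ``$M$ is a $\Sigma$-structure'' is \emph{not} $\Sigma_0$ in the paper's sense---Proposition~\ref{nonabsolute} shows it is $\Pi_1$, because asserting that the domain of each $\sigma_M$ is the full product $\prod_i M_{s_i}$ needs an unbounded universal quantifier. But this does not damage your argument. What matters is that, \emph{given} a $\Sigma'$-structure~$M$, extracting the triple $(X^0,X^1,f)$ is bounded in the data of~$M$, and the homomorphism condition on $f$ can be checked by quantifying over the (set-sized) graphs of the operations $\sigma^0_M$, $\sigma^1_M$, which are already part of~$M$---hence is $\Sigma_0$ relative to~$M$. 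Moreover, when you pass from a $\Sigma_n$ class $\De\subseteq\Ar{\bf Str}\,\Sigma$ to its image, the clause ``$(X^0,X^1,f)\in\De$'' already forces $f$ to be a homomorphism, so you need not impose the image predicate separately. The paper's own justification is terser but amounts to the same thing.
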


\begin{proof}
Let $S$ be the set of sorts of~$\Sigma$. 
Consider a new set of sorts $S'$ with two elements $s^0$ and $s^1$ for each $s\in S$, and let $\Sigma'$ be the $S'$\nobreakdash-sorted signature with the following operation symbols and relation symbols. The set $\Sigma'_{\rm op}$ has two symbols $\sigma^0$ and $\sigma^1$ of respective arities $\langle (s_i)^0:i\in\alpha\rangle \to s^0$ and $\langle (s_i)^1:i\in\alpha\rangle \to s^1$ 
for each symbol $\sigma\in\Sigma_{\rm op}$ of arity $\langle s_i:i\in\alpha\rangle \to s$, and an additional symbol $\mu_s$ of arity $s^0\to s^1$ for each $s\in S$.
The set $\Sigma'_{\rm rel}$ has two symbols $\rho^0$ and $\rho^1$ of respective arities $\langle (s_j)^0:j\in\beta\rangle$ and $\langle (s_j)^1:j\in\beta\rangle$ for each symbol $\rho\in\Sigma_{\rm rel}$ of arity $\langle s_j:j\in\beta\rangle$.

Then a $\Sigma'$\nobreakdash-structure is a pair of
$\Sigma$\nobreakdash-structures $X^0$ and $X^1$ together with an $S$\nobreakdash-sorted function 
$\mu\colon X^0\to X^1$. Therefore, $\Ar{\bf Str}\,\Sigma$ is canonically
isomorphic to the full subcategory of ${\bf Str}\,\Sigma'$
whose objects are triples $\langle X^0,X^1,\mu\rangle$ for which $\mu$ is a homomorphism of $\Sigma$\nobreakdash-structures.

For the second claim, define,
as in \cite[1.57(2)]{AR}, a signature $\Sigma''$ by adding to $\Sigma$ a new relation symbol
$\rho_a$ of arity $s$ for each element $a\in A_s$.
It then follows that $(A\downarrow{\bf Str}\,\Sigma)$ is canonically isomorphic
to the full subcategory of ${\bf Str}\,\Sigma''$ whose objects are
those $Y\in{\bf Str}\,\Sigma$ for which $(\rho_a)_Y$ consists
of a single element of $Y_s$ for each $a\in A_s$ and the function 
$\rho_Y\colon A\to Y$ given by $\rho_Y(a)=(\rho_a)_Y$ is a homomorphism of $\Sigma$\nobreakdash-structures. 

Both embeddings preserve complexity due to their canonical nature.
In more detail, suppose given a {\boldmath$\Sigma_n$} class $\Fe$ of
objects in $\Ar{\bf Str}\,\Sigma$. Then its image $\Fe'$ in ${\bf Str}\,\Sigma'$ is defined as the class of $\Sigma'$\nobreakdash-structures
\begin{multline*}
X=
\langle
\{ X_{s^0}:s\in S \}\cup
\{ X_{s^1}:s\in S \}, \\
\{ (\sigma^0)_X:\sigma\in\Sigma_{\rm op} \}\cup
\{ (\sigma^1)_X:\sigma\in\Sigma_{\rm op} \}\cup
\{ (\mu_s)_X:s\in S \}, \\
\{ (\rho^0)_X:\rho\in\Sigma_{\rm rel} \}\cup
\{ (\rho^1)_X:\rho\in\Sigma_{\rm rel} \}
\rangle
\end{multline*}
for which the triple consisting of
\begin{align*}
X^0 & =
\langle
\{ X_{s^0}:s\in S \}, \,
\{ (\sigma^0)_X:\sigma\in\Sigma_{\rm op} \}, \,
\{ (\rho^0)_X:\rho\in\Sigma_{\rm rel} \}
\rangle,
\\[0.1cm]
X^1 & =
\langle
\{ X_{s^1}:s\in S \}, \,
\{ (\sigma^1)_X:\sigma\in\Sigma_{\rm op} \}, \,
\{ (\rho^1)_X:\rho\in\Sigma_{\rm rel} \}
\rangle,
\end{align*}
together with the $S$\nobreakdash-sorted function $f\colon X^0\to X^1$ given by $f_s=(\mu_s)_X$ for all $s\in S$ is in the class~$\Fe$. Hence, $\Fe'$ is also {\boldmath$\Sigma_n$}, and analogously with~{\boldmath$\Pi_n$}. 

The argument for $(A\downarrow{\bf Str}\,\Sigma)$ is similar.
\end{proof}

\begin{proposition}
\label{nonabsolute}
If $\Sigma$ is a $\lambda$\nobreakdash-ary signature for a regular cardinal~$\lambda$, then the following assertions hold:
\begin{itemize}
\item[{\rm (a)}]
The category ${\bf Str}\,\Sigma$ of $\Sigma$\nobreakdash-structures
is {\boldmath$\Pi_1$} with parameters $\{\lambda,\Sigma\}$, and it is absolute
between transitive classes closed under sequences of length less than $\lambda$ and containing the parameters.
\item[{\rm (b)}]
More generally, the category ${\bf Mod}\,T$ of models of a theory $T$
in $\Le_{\lambda}(\Sigma)$ is {\boldmath$\Delta_2$} 
with parameters $\{\lambda,\Sigma,T\}$, and it is absolute
between transitive classes closed under sequences of length less than $\lambda$ and containing the parameters.
\end{itemize}
\end{proposition}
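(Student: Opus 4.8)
The plan is to write down explicit defining formulas for the objects, morphisms, composition and identities of each of the two categories, read off their L\'evy complexity, and then establish absoluteness by checking that the only set-theoretic operations occurring in those formulas which are not themselves absolute --- infinite products of the sort-universes, spaces of variable assignments, and satisfaction functions --- are computed in the same way in any transitive class that is closed under sequences of length less than~$\lambda$.

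For~(a), an object of ${\bf Str}\,\Sigma$ is a triple as in Section~\ref{categoriesofstructures}. Taking $\{\lambda,\Sigma\}$ as a parameter (so that the arity function is available), the requirements that each $X_s$ be a set, that $\rho_X\subseteq\prod_{j\in\beta}X_{s_j}$ for each relation symbol~$\rho$, and that $\sigma_X$ be a function into $X_s$ whose domain is included in $\prod_{i\in\alpha}X_{s_i}$ for each operation symbol~$\sigma$, are all $\Sigma_0$, the relevant quantifiers being bounded by $X$, by the $\rho_X$, by the $\sigma_X$ and by~$\Sigma$. The only clause that is not bounded is that the domain of each $\sigma_X$ be the \emph{whole} product $\prod_{i\in\alpha}X_{s_i}$; for infinite arities this is genuinely $\Pi_1$, as it states that every tuple $\langle a_i:i\in\alpha\rangle$ with $a_i\in X_{s_i}$ belongs to that domain. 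Hence ``$X\in{\bf Str}\,\Sigma$'' is $\Pi_1$, and so is ``$f\in{\bf Str}\,\Sigma(X,Y)$'': the latter asserts that $X$ and $Y$ are $\Sigma$-structures and that $f=(f_s)_{s\in S}$ is an $S$-sorted function preserving operations and relations, the preservation clauses being bounded quantifications over $\Sigma$ and over the $\sigma_X,\rho_X$ together with the formation of the sequences $\langle f_{s_i}(a_i):i\in\alpha\rangle$ of length $<\lambda$, which is $\Delta_1$. Composition and identities are $\Sigma_0$, being given by ``$h_s=g_s\circ f_s$ for all $s$'' and ``$i_s$ is the identity map of $X_s$ for all $s$''. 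Thus ${\bf Str}\,\Sigma$ is $\Pi_1$ with parameters $\{\lambda,\Sigma\}$. Since $\Pi_1$ classes are downward absolute between transitive classes, it remains to check upward absoluteness: if $M$ is transitive and closed under sequences of length $<\lambda$ and $X\in M$, then every tuple $\langle a_i:i\in\alpha\rangle$ with $\alpha<\lambda$ and $a_i\in X_{s_i}$ (hence $a_i\in M$ by transitivity) lies in~$M$, so $M$ computes the true product $\prod_{i\in\alpha}X_{s_i}$, and likewise the sequences $\langle f_{s_i}(a_i):i\in\alpha\rangle$ lie in~$M$. Therefore totality of the $\sigma_X$ and the preservation conditions on morphisms are absolute between any two such classes containing $\{\lambda,\Sigma\}$, and hence so is ${\bf Str}\,\Sigma$.

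For~(b), ${\bf Mod}\,T$ is the full subcategory of ${\bf Str}\,\Sigma$ on the structures satisfying every sentence of~$T$, so ``$X\in{\bf Mod}\,T$'' is the conjunction of the $\Pi_1$ statement ``$X\in{\bf Str}\,\Sigma$'' with the statement that $X\models\varphi$ for every $\varphi\in T$, while ``$f\in{\bf Mod}\,T(X,Y)$'' adds to ``$f\in{\bf Str}\,\Sigma(X,Y)$'' the requirement that $X$ and $Y$ be models of~$T$, composition and identities being as before. Everything thus reduces to the complexity and absoluteness of the satisfaction relation for $\Le_{\lambda}(\Sigma)$. A satisfaction function over $X$ for the subformulas of the sentences of~$T$ is a function $\Phi$ assigning to each such subformula the set of assignments of its free variables into $\bigcup_{s\in S}X_s$ that make it true, subject to the Tarski recursion clauses. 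The clauses for atomic formulas, for connectives, and for the $<\lambda$-ary conjunctions and disjunctions are bounded, but the clause for an infinitary quantifier $\exists(x_i)_{i\in I}\psi$ or $\forall(x_i)_{i\in I}\psi$ quantifies over all assignments of the $x_i$ into $\bigcup_{s\in S}X_s$, so ``$\Phi$ is a satisfaction function'' is $\Pi_1$, and (much as in Example~\ref{example3}) it cannot be rendered $\Sigma_0$ or $\Sigma_1$ in general. Consequently ``$X\models\varphi$ for all $\varphi\in T$'' is equivalent both to the $\Sigma_2$ statement that there is such a function $\Phi$ recording, for every $\varphi\in T$, that $\varphi$ holds under the empty assignment, and --- since such a $\Phi$ provably exists and is unique --- to the $\Pi_2$ statement obtained by quantifying universally over~$\Phi$; hence it is $\Delta_2$. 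Conjoining with the $\Pi_1$ clauses of~(a), and again noting that composition and identities are $\Sigma_0$, we get that ${\bf Mod}\,T$ is $\Delta_2$ with parameters $\{\lambda,\Sigma,T\}$. For absoluteness, let $M\subseteq N$ be transitive classes closed under sequences of length $<\lambda$ and containing $\{\lambda,\Sigma,T\}$; exactly as in~(a), for $X\in M$ every assignment of a set of fewer than $\lambda$ variables into $\bigcup_{s\in S}X_s$ lies in~$M$, so $M$ and $N$ compute the same spaces of assignments, and therefore the same (unique) satisfaction functions; hence the satisfaction relation, and with it membership of objects and morphisms in ${\bf Mod}\,T$, is absolute between $M$ and~$N$.

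I expect the main obstacle to be the treatment of the satisfaction relation for the infinitary language in part~(b): pinning down that its complexity is exactly $\Delta_2$ --- not $\Delta_1$, because evaluating an infinitary quantifier implicitly invokes a space of sequences of length $<\lambda$, an operation of genuine $\Pi_1$ complexity in the sense of Example~\ref{example3}, and not worse, because a single satisfaction function handles all of $T$ at once --- and showing that closure under sequences of length $<\lambda$ is exactly the hypothesis that makes those spaces, and hence the whole satisfaction predicate, absolute. The remaining work is careful bookkeeping with bounded quantifiers together with the fact that $\Pi_1$ formulas are downward absolute.
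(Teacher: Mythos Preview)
Your argument for part~(a) is essentially the paper's: both isolate the unbounded universal quantifier over tuples $\langle a_i:i\in\alpha\rangle$ (the paper phrases it as $\sigma_X(x)\in X_s$ for all $x$, you as totality of $\sigma_X$), and the absoluteness argument is the same observation that closure under $<\lambda$-sequences makes those products absolute.

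For part~(b) you take a genuinely different route. The paper first observes that satisfaction is absolute between transitive classes closed under $<\lambda$-sequences, and then invokes the Reflection Principle trick of~\eqref{trick}: ``$X\models T$'' holds iff some (equivalently, every) transitive $M$ closed under $<\lambda$-sequences, containing the parameters and modelling a suitable finite fragment $\mathrm{ZFC}^*$, thinks so. This yields $\Sigma_2$ and $\Pi_2$ formulations at once, with no need to analyse the satisfaction recursion. Your approach is more direct: you introduce an explicit satisfaction function $\Phi$ and argue that ``$\Phi$ is a satisfaction function'' is $\Pi_1$, whence $\exists\Phi$ and $\forall\Phi$ give $\Sigma_2$ and $\Pi_2$. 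This is self-contained and avoids Reflection, which is a genuine advantage; the paper's method, on the other hand, is a reusable device that applies uniformly to any predicate absolute for the relevant class of models.

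There is one technical point to tighten. With your encoding ($\Phi(\psi)$ = set of satisfying assignments), the $\leftarrow$ direction of the Tarski clause for $\forall(x_i)_{i\in I}\psi$ reads
\[
\forall a\,\bigl[(\forall b\,(b\text{ extends }a\to b\in\Phi(\psi)))\to a\in\Phi(\forall(x_i)\psi)\bigr],
\]
which is $\forall a\,[\Pi_1\to\Delta_0]$, hence $\Pi_2$, not $\Pi_1$. The fix is easy: let $\Phi$ also record, for each subformula $\psi$, the full set $A_\psi$ of assignments of its free variables (or equivalently encode $\Phi$ as a function from pairs $(\psi,a)$ to $\{0,1\}$). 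Then ``$A_\psi$ contains every assignment'' is the only genuinely $\Pi_1$ clause, and all the Tarski recursion conditions become bounded by quantifying over $A_\psi$. With that adjustment your argument goes through and gives $\Delta_2$ as claimed.
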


\begin{proof}
In order to claim that $X$ is a $\Sigma$\nobreakdash-structure,
we need to formalize the following statement: ``$\lambda$ is a regular cardinal,
and $\Sigma=\langle S,\Sigma_{\rm op},\Sigma_{\rm rel},{\rm ar}\rangle$ is a $\lambda$\nobreakdash-ary signature,
and $X=\langle\{ X_s: s\in S\},\,\{ \sigma_X: \sigma\in\Sigma_{\rm op}\},\,\{ \rho_X:\rho\in\Sigma_{\rm op}\}\rangle$ is a $\Sigma$\nobreakdash-structure''.
Writing down that $\lambda$ is a regular cardinal is $\Pi_1$
by \cite[Lemma~13.13]{J2}, and adding that $\Sigma$
is a $\lambda$\nobreakdash-ary signature does not increase complexity. 
The assertion that $X$ is a $\Sigma$\nobreakdash-structure 
includes the $\Pi_1$ formula
\[
\begin{array}{c}
(\forall\sigma\in\Sigma_{\rm op})\,(\forall \alpha\in\lambda)\,(\forall x)\,
[[\mbox{$x$ is a function $\alpha\to\cup_{s\in S}\,X_s$}
\\ [0.1cm]
\wedge\, \mbox{${\rm ar}(\sigma)=(\langle s_i:i\in\alpha\rangle\to s)$} \,\wedge\,
(\forall i\in\alpha)\,x(i)\in X_{s_i}]\to \sigma_X(x)\in X_s].
\end{array}
\]
Hence, the whole statement is~$\Pi_1$. Similarly, the assertion
that $f\colon X\to Y$ is a homomorphism of $\Sigma$\nobreakdash-structures is~$\Pi_1$,
since we need to impose that $f(\sigma_X(x))=\sigma_Y(f(x))$ for all functions
$x\colon\alpha\to\cup_{s\in S}\,X_s$ with $x(i)\in X_{s_i}$ for all $i\in\alpha$, for each
operation symbol $\sigma$ of arity $\langle s_i:i\in\alpha\rangle\to s$. 
Stating that $f(x)\in\rho_Y$ for every $x\in\rho_X$ and each relation symbol $\rho$ 
does not require unbounded quantifiers.

If $\lambda=\omega$, then we can omit the clause ``$\lambda$ is a regular cardinal''
and there is only need to quantify over finite sequences
in $\cup_{s\in S}\,X_s$, which is~$\Delta_1$, as discussed in Example~\ref{example3}.

In order to state that $X$ is a model of a theory~$T$, we need to assert that
``$X$ is a $\lambda$\nobreakdash-ary $\Sigma$\nobreakdash-structure, and
$T$ is a set of sentences of the language of~$\Sigma$,
and every sentence of $T$ is satisfied in~$X$''. 
If $\lambda=\omega$, then this is again~$\Delta_1$, since satisfaction
of sentences of a finitary language in $X$ only depends on finite subsets of~$X$. 
For an arbitrary regular cardinal~$\lambda$, the last two clauses
are absolute between transitive classes that are closed under sequences
of length less than~$\lambda$.
Hence, by the Reflection Principle, $X$ is a model of $T$ if and only if every $\varphi\in T$ is a sentence of the language of~$\Sigma$, and $X$ is a $\Sigma$\nobreakdash-structure, 
and there is a finite fragment ${\rm ZFC}^*$ of ZFC such that
\begin{equation}
\label{trick1}
\begin{array}{c}
\exists M\, (\mbox{$M$ is transitive and closed under $<\!\lambda$-sequences} 
\\[0.1cm]
\wedge\,\{\lambda,\Sigma,T,X\}\subset M \,
\wedge\, M\models\textstyle\bigwedge{\rm ZFC}^*
\,\wedge\, M\models (\forall\varphi\in T)\, X\models \varphi),
\end{array}
\end{equation}
which can be replaced with
\begin{equation}
\label{trick2}
\begin{array}{c}
\forall M\, ((\mbox{$M$ is transitive and closed under $<\!\lambda$-sequences}
\\[0.1cm]
\wedge\,\{\lambda,\Sigma,T,X\}\subset M \,
\wedge\, M\models\textstyle\bigwedge{\rm ZFC}^* 
) \to M\models (\forall\varphi\in T)\, X\models \varphi).
\end{array}
\end{equation}
Since \eqref{trick1} is $\Sigma_2$ and \eqref{trick2} is~$\Pi_2$,
the statement ``$X$ is a model of~$T$'' is~$\Delta_2$. And a morphism
between models of $T$ is just a homomorphism of $\Sigma$\nobreakdash-struc\-tures,
so the proof is complete.
\end{proof}

\section{Supporting elementary embeddings}
\label{elementaryembeddings}

An \emph{elementary embedding} of a $\Sigma$\nobreakdash-structure $X$ into another 
$\Sigma$\nobreakdash-structure $Y$ (where $X$ and $Y$ can be proper classes)
is a function $j\colon X\to Y$ that preserves and reflects truth.
That is, for every formula $\varphi(x_i)_{i\in I}$ of the language of $\Sigma$
and all $\{ a_i: i\in I \}$ in~$X$, the sentence 
$\varphi(a_i)_{i\in I}$ is satisfied in $X$ if and only if $\varphi(j(a_i))_{i\in I}$ is satisfied in~$Y$.

In what follows, we consider elementary embeddings between structures of the language of set theory.
If $j\colon V\to M$ is a nontrivial elementary embedding
of the universe $V$ of all sets into a transitive class~$M$, then its \emph{critical point}
(i.e., the least ordinal moved by~$j$) is a measurable cardinal.
In fact, the existence of a nontrivial elementary embedding
of the set-theoretic universe into a transitive class is equivalent to the existence of 
a measurable cardinal \cite[Lemma~17.3]{J2}.

Given a subcategory $\Ce$ of the category of sets and an elementary embedding $j\colon V\to M$,
we say that $j$ is \emph{supported} by~$\Ce$ if, for every object $X$ in~$\Ce$,
the set $j(X)$ is also in $\Ce$ and the restriction function 
$j\restriction X : X\to j(X)$ is a morphism in~$\Ce$.

\begin{theorem}
\label{lemma0}
Let $j\colon V\to M$ be an elementary embedding with critical point~$\kappa$. 
Let $\Sigma$ be a $\lambda$\nobreakdash-ary signature in~$V_{\kappa}$ for
a regular cardinal $\lambda < \kappa$ such that 
$M$ is closed under sequences
of length less than~$\lambda$.
If $X$ is a $\Sigma$\nobreakdash-structure, then $j(X)$ is also
a $\Sigma$\nobreakdash-structure and $j\restriction X : X\to j(X)$ 
is an elementary embedding of $\Sigma$\nobreakdash-structures. 
\end{theorem}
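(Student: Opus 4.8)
The plan is to show that $j\restriction X$ has the defining property of an elementary embedding of $\Sigma$-structures, working formula by formula on the structure of formulas in $\Le_\lambda(\Sigma)$. First I would observe that since $\Sigma\in V_\kappa$ and $\kappa$ is the critical point of $j$, we have $j(\Sigma)=\Sigma$ and $j(\lambda)=\lambda$ (as $\lambda<\kappa$), and likewise every sort, operation symbol, relation symbol, and the arity function are fixed by~$j$; this uses that elements of $V_\kappa$ are pointwise fixed. By elementarity of $j$ applied to the statement ``$X$ is a $\Sigma$-structure'' — which is first-order in the language of set theory with parameters $\Sigma$, $\lambda$, $X$ — and using $j(\Sigma)=\Sigma$, $j(\lambda)=\lambda$, we conclude that $M$ believes $j(X)$ is a $\Sigma$-structure; then Proposition~\ref{nonabsolute}(a), together with the hypothesis that $M$ is closed under $<\!\lambda$-sequences, upgrades this to: $j(X)$ genuinely is a $\Sigma$-structure in~$V$. (For $\lambda=\omega$ the closure hypothesis is automatic.)

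Next I would check that $j\restriction X$ is in fact a homomorphism of $\Sigma$-structures, i.e.\ that it commutes with the operations $\sigma_X,\sigma_{j(X)}$ and sends tuples in $\rho_X$ to tuples in $\rho_{j(X)}$. For a constant or finitary operation this is immediate from elementarity applied to the atomic fact $\sigma_X(a_1,\dots,a_n)=b$; for an operation $\sigma$ of infinite arity $\alpha<\lambda$ one argues that a tuple $\langle a_i:i\in\alpha\rangle$ in $X$ maps under $j$ to $\langle j(a_i):i\in\alpha\rangle$, which is legitimate precisely because $M$ is closed under $<\!\lambda$-sequences so that $j\restriction$(such a tuple) lies in $M$, and then elementarity of $j$ transfers $\sigma_X(\langle a_i\rangle)=b$ to $\sigma_{j(X)}(j(\langle a_i\rangle))=j(b)$; and the observation that $j$ of the sequence coincides with the sequence of the $j(a_i)$ requires only that $\alpha<\kappa$ so that $j$ fixes $\alpha$ pointwise. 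The relation symbols are handled the same way. This establishes that terms are interpreted compatibly, which is the base case of the promised induction.

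The heart of the argument is the induction on the complexity of formulas $\varphi$ of $\Le_\lambda(\Sigma)$, proving: for every variable assignment $x_i\mapsto a_i$ in $X$ (with $i$ ranging over a set of size $<\lambda$), $X\models\varphi(a_i)_i$ if and only if $j(X)\models\varphi(j(a_i))_i$. Atomic formulas are covered by the previous paragraph. Boolean combinations — including the $<\!\lambda$-ary conjunctions and disjunctions allowed in $\Le_\lambda(\Sigma)$ — are routine, using that $j$ fixes the index set $I\in V_\kappa$. The quantifier step is where the real work lies: for $\varphi=(\exists(x_i)_{i\in I})\psi$ with $|I|<\lambda$, one direction is easy (a witness in $X$ maps to a witness in $j(X)$, again using $<\!\lambda$-closure of $M$ to see that the $j$-image of a witnessing $<\!\lambda$-sequence lands in $M$), but the converse requires producing, from a witnessing $<\!\lambda$-sequence $\langle b_i:i\in I\rangle$ in $j(X)\subseteq M$, a corresponding sequence in $X$. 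Here is the main obstacle, and the resolution is exactly the $<\!\lambda$-closure of $M$ combined with the regularity of $\lambda$: there is no a priori reason a tuple in $j(X)$ is in the range of $j$, so one cannot simply ``pull back'' the witness. The standard fix is to run the induction not as a plain biconditional but as the statement ``$j\restriction X$ is $\Sigma_n$-elementary'' by induction on $n$ in the L\'evy-style quantifier hierarchy for $\Le_\lambda(\Sigma)$, or — more in the spirit of this paper — to note that satisfaction of a fixed formula $\varphi$ in a $\Sigma$-structure is, by Proposition~\ref{nonabsolute} and the closure of $M$, absolute between $M$ and~$V$, so it suffices to prove the elementarity statement with satisfaction computed in~$M$; inside $M$, the embedding $j\restriction X\colon X\to j(X)$ and the elementarity of $j\colon V\to M$ let one transfer the existence of a witness in $M$ for $j(X)$ back to a witness for $X$ in~$V$ via elementarity of $j$ itself applied to the (first-order, set-theoretic) statement ``there exists a $<\!\lambda$-sequence of elements of $X$ satisfying $\psi$ in the $\Sigma$-structure $X$''. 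I would present this cleanly by: (i) fixing $\varphi$; (ii) writing $S_\varphi(X,a)$ for the $\Delta_2$ (by Proposition~\ref{nonabsolute}) set-theoretic predicate ``$X\models\varphi(a)$''; (iii) applying elementarity of $j\colon V\to M$ to $S_\varphi$; and (iv) using the $<\!\lambda$-closure of $M$ to replace $M$-satisfaction by true satisfaction. I expect the bookkeeping of assignments of length $<\lambda$ (making sure $j$ fixes the relevant index sets and that the induced assignments are legitimate $<\!\lambda$-sequences in $M$) to be the only genuinely delicate point, and it is dispatched precisely by the hypotheses $\Sigma\in V_\kappa$, $\lambda<\kappa$, and $M^{<\lambda}\subseteq M$.
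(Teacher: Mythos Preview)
Your proposal is correct and follows essentially the same route as the paper: fix $\Sigma$ and $\lambda$ via $\Sigma\in V_\kappa$, use elementarity of $j$ plus Proposition~\ref{nonabsolute} to see that $j(X)$ is a $\Sigma$\nobreakdash-structure in~$V$, then prove elementarity of $j\restriction X$ by induction on formula complexity, resolving the converse of the existential step exactly as you describe --- absoluteness of satisfaction between $M$ and $V$ (from $<\!\lambda$\nobreakdash-closure) followed by elementarity of $j$ applied to the set-theoretic satisfaction statement. The paper's write-up is terser (it folds the homomorphism check into the atomic case and does not mention the L\'evy-hierarchy alternative), but the substance is the same.
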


\begin{proof}
First, observe that $j(\lambda)=\lambda$ and hence $\lambda$ is also a regular cardinal in~$M$. 
Next, $j(\Sigma)=\Sigma$ as $\Sigma\in V_{\kappa}$. Therefore, since $j$ is an elementary embedding, 
if $X$ is a $\Sigma$\nobreakdash-structure then $j(X)$ is a $\Sigma$\nobreakdash-structure in~$M$. 
It follows that $j(X)$ is also a $\Sigma$\nobreakdash-structure in~$V$, 
because, by Proposition~\ref{nonabsolute}, being a $\lambda$\nobreakdash-ary $\Sigma$\nobreakdash-structure is absolute for transitive classes
containing $\lambda$ and closed under sequences
of length less than~$\lambda$.
 
We next check, by induction on the complexity of formulas of~$\mathcal{L}_\lambda (\Sigma)$, 
that $j\restriction X$ is an elementary embedding of $\Sigma$\nobreakdash-structures.
For atomic formulas, let $\sigma \in \Sigma_{\rm op}$ be an operation symbol with arity 
$\langle s_i:i\in \alpha\rangle\to s$ where $\alpha<\lambda$, so $j(\alpha)=\alpha$. 
Thus, if $a_i\in X_{s_i}$ for all $i\in \alpha$, and $a\in X_s$, then, since $j$ is elementary,
$X\models (\sigma_X(a_i)_{i\in\alpha}=a)$ if and only if
\[
M\models \Big( j(X)\models (\sigma_{j(X)}(j(a_i))_{i\in\alpha}=j(a))\Big).
\]
Since the statement $j(X)\models (\sigma_{j(X)}(j(a_i))_{i\in\alpha}=j(a))$ 
is absolute for transitive classes, it holds in $M$ if and only if it holds in~$V$, as needed.
Relation symbols $\rho\in \Sigma_{\rm rel}$ are dealt with similarly,
and the cases of negation and conjunction are immediate. Thus, there only remains to consider existential formulas. 
If $X\models \exists x\, \varphi (x, a)$ for some $a\in X$, then there exists $b\in X$ such  that 
$X\models \varphi (b,a)$. By induction hypothesis, $j(X)\models \varphi (j(b),j(a))$; hence 
$j(X)\models \exists x\, \varphi (x,j(a))$. 
For the converse, observe first that, since $M$ is transitive and closed under sequences
of length less than~$\lambda$, 
satisfaction in $j(X)$ of formulas of $\mathcal{L}_\lambda (\Sigma)$ is absolute between $M$ and~$V$.
Hence, if $j(X)\models \exists x\, \varphi (x,j(a))$
for some $a\in X$, then  $M\models (j(X)\models \exists x\, \varphi (x,j(a)))$, and, 
by elementarity of~$j$, we conclude that $X\models \exists x\, \varphi(x,a)$.
\end{proof}

Since elementary embeddings of $\Sigma$\nobreakdash-structures are homomorphisms,
Theorem~\ref{lemma0} tells us that categories of structures support
elementary embeddings with sufficiently large critical point.
The following generalization of this fact is a 
more accurate restatement of \cite[Proposition~4.4]{BCM}.

\begin{theorem}
\label{lemma1}
Let $\Ce$ be a class of $\Sigma$\nobreakdash-structures for some 
$\lambda$\nobreakdash-ary signature~$\Sigma$, where $\lambda$ is a regular cardinal.
Suppose that $\Ce$ is {\boldmath$\Sigma_1$} with a set $p$ of parameters.
Let $j\colon V\to M$ be an elementary embedding with critical point~$\kappa>\lambda$
such that $M$ is closed under sequences
of length less than~$\lambda$ and $\{p,\Sigma\}\in V_{\kappa}$.
If $X\in\Ce$, then $j(X)\in\Ce$ and $j\restriction X : X\to j(X)$ is 
an elementary embedding of $\Sigma$\nobreakdash-structures. 
\end{theorem}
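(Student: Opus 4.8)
The plan is to decompose the conclusion into its two parts. The assertion that $j(X)$ is a $\Sigma$\nobreakdash-structure and that $j\restriction X\colon X\to j(X)$ is an elementary embedding of $\Sigma$\nobreakdash-structures should follow directly from Theorem~\ref{lemma0}: indeed $X$ is a $\Sigma$\nobreakdash-structure because it belongs to the class $\Ce$ of $\Sigma$\nobreakdash-structures, and the hypotheses of Theorem~\ref{lemma0} are exactly the ones assumed here, namely that $\lambda<\kappa$ is regular, that $\Sigma\in V_\kappa$ (which follows from $\{p,\Sigma\}\in V_\kappa$ since $\kappa$ is a cardinal), and that $M$ is closed under sequences of length less than~$\lambda$. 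So the real content to establish is the membership $j(X)\in\Ce$.

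For that, first I would record that $j$ fixes the parameters: since $p\in V_\kappa$ and $\kappa$ is the critical point of~$j$, we have $j(p)=p$, just as $j(\Sigma)=\Sigma$ in the proof of Theorem~\ref{lemma0}. Next, fix a $\Sigma_1$ formula $\varphi(x,y)$ with $\Ce=\{x:\varphi(x,p)\}$. From $X\in\Ce$ we get $V\models\varphi(X,p)$, and elementarity of~$j$ yields $M\models\varphi(j(X),j(p))$, that is, $M\models\varphi(j(X),p)$. Finally I would invoke the upward absoluteness of $\Sigma_1$ formulas for transitive classes containing their parameters, as established in Section~\ref{Levyhierarchy}: applying it to the transitive classes $M\subseteq V$ with the parameter $p\in M$ (note $p=j(p)$ lies in the range of~$j$, hence in~$M$), we conclude $V\models\varphi(j(X),p)$, that is, $j(X)\in\Ce$.

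There is no serious obstacle in this argument; it amounts to combining elementarity with $\Sigma_1$ upward absoluteness. The only points that require a little care are the routine bookkeeping that $j$ fixes $p$ and $\Sigma$, and the observation that the hypothesis that $M$ be closed under $<\lambda$\nobreakdash-sequences plays no role in proving $j(X)\in\Ce$ and is needed only in order to quote Theorem~\ref{lemma0} for the elementary\nobreakdash-embedding part of the statement.
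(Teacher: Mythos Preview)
Your proposal is correct and follows essentially the same approach as the paper: the paper's own proof consists of a single sentence saying to repeat the steps of Theorem~\ref{lemma0} and use upward absoluteness of $\Sigma_1$ formulas to obtain $j(X)\in\Ce$, which is exactly what you have spelled out in detail. Your explicit bookkeeping that $j(p)=p$ and that $p\in M$ (so that upward absoluteness from $M$ to $V$ applies) is precisely the content that the paper leaves implicit.
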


\begin{proof}
The proof follows the same steps as the proof of Theorem~\ref{lemma0},
using the fact that $\Sigma_1$ formulas are upward absolute to infer
that $j(X)\in\Ce$ for every $X\in\Ce$.
\end{proof}

\section{Vop\v{e}nka's principle and supercompact cardinals}
\label{VPandsupercompactcardinals}

For any two structures $M\subseteq N$ of the language of set theory
and $n<\omega$, we write $M\preceq_n N$ and say that
$M$ is a \emph{$\Sigma_n$\nobreakdash-elementary substructure} of~$N$ if, for every $\Sigma_n$ formula 
$\varphi (x_1,\ldots ,x_k)$ and all $a_1,\ldots ,a_k \in M$,
\[
N\models \varphi (a_1,\ldots ,a_k)\; \mbox{ if and only if }\; M\models \varphi (a_1,\ldots ,a_k).
\]

For a cardinal $\lambda$, we denote by $H(\lambda)$ the set of all sets whose transitive closure 
has cardinality less than~$\lambda$. Thus $H(\lambda)$ is a transitive set contained
in~$V_{\lambda}$, and, if $\lambda$ is strongly inaccessible, then $H(\lambda)=V_{\lambda}$; 
see~\cite[Lemma~6.2]{Kun2}.

A~class $C$ of ordinals is \emph{unbounded} if it contains arbitrarily 
large ordinals, and it is \emph{closed} if, for every ordinal~$\alpha$, 
if $\bigcup(C\cap \alpha)=\alpha$ then $\alpha \in C$. The abbreviation \emph{club} means
closed and unbounded. As a consequence of the Reflection Principle \cite[Theorem~12.14]{J2},
for every $n$ there exists a club class of cardinals $\lambda$ such that $H(\lambda)\preceq_{n}V$.
In addition, if $\lambda$ is uncountable, then $H(\lambda)\preceq_{1}V$.

In what follows, structures are meant to be sets, not proper classes.
We say that $X$ and $Y$ are \emph{structures of the same type} if
they are both $\Sigma$\nobreakdash-structures for some signature~$\Sigma$.
\emph{Vop\v{e}nka's principle} is the following assertion; 
compare with \cite[Ch.\,6]{AR} or \cite[(20.29)]{J2}:

\medskip

\noindent
VP: 
\emph{For every proper class $\Ce$ of structures of the same type, there exist
distinct $X$ and $Y$ in $\Ce$ and an elementary embedding of $X$ into~$Y$}.

\medskip

This is a statement involving classes. In the language of set theory, 
one can also formulate~VP, but as an axiom schema, that is, an infinite set of axioms;
namely, one axiom for each formula $\varphi (x,y)$ of the language of set theory with two free variables,
as follows: 
\[
\begin{array}{ccc}
\forall x\, [(\forall y\, \forall z\, ((\varphi (x,y)\wedge \varphi(x,z))\to \mbox{$y$
and $z$ are structures of the same type}) \\[0.1cm]
\wedge\, \forall \alpha\, (\mbox{$\alpha$ is an ordinal}\to
\exists y\, (\text{rank}(y)>\alpha\, \wedge\, \varphi(x,y))))\to \\[0.1cm]
\exists y\,\exists z\,(\varphi(x,y)\,\wedge\, \varphi(x,z)\,\wedge\, y\ne z\, \wedge\, \exists e\,
(\mbox{$e\colon y\to z$ is elementary}))].
\end{array}
\]
In this article, VP will be understood as this axiom schema,
and similarly with the variants of VP defined below.

In the statement of VP, the requirement that there is an elementary embedding between two \emph{distinct} structures 
is sometimes replaced by the requirement that there is a \emph{nontrivial} elementary embedding
between two possibly equal structures. 
It follows from \cite{BT} that it is consistent with ZFC to assume that the two formulations
are equivalent. Equivalence can be proved
using rigid graphs, as in~\cite[\S 6.A]{AR}, although this seems to require the use of global choice.

The theory $\text{ZFC} + \text{VP}$ is very strong. It implies, for instance, that the class of 
extendible cardinals is stationary, that is, every club proper class contains 
an extendible cardinal~\cite{M}. The consistency of $\text{ZFC} + \text{VP}$ 
follows from that of ZFC plus the existence of an almost-huge cardinal; see \cite{J2} or~\cite{K}.

If $\lambda$ and $\nu$ are cardinals, we denote by $\nu^{<\lambda}$ the union of $\nu^{\alpha}$ for all $\alpha<\lambda$. If $f\colon A\to B$ is a homomorphism of structures and $M$ is any set, when we write that $f\in M$ we mean that $A,B\in M$ and $\{(a,f(a)):a\in A\}\in M$.

\begin{theorem}
\label{vpsigma1}
Let $\Ce$ be a full subcategory of $\Sigma$\nobreakdash-structures definable by a $\Sigma_1$ formula with a set $p$ of parameters for some 
$\lambda$\nobreakdash-ary signature~$\Sigma$. Let $\kappa$ be a regular cardinal bigger than $\lambda$ such that $\{p,\Sigma\}\in H(\kappa)$ and with the property that $\nu^{<\lambda} <\kappa$ for all $\nu <\kappa$. Then the following hold:
\begin{itemize}
\item[{\rm (a)}]
For every homomorphism $g\colon A\to Y$ of $\Sigma$\nobreakdash-structures with $A\in H(\kappa)$ and $Y\in\Ce$ there is a homomorphism $f\colon A\to X$ with $X\in\Ce\cap H(\kappa)$ and a commutative~triangle
\[
\xymatrix{ 
& A\ar[dl]_f\ar[dr]^g & \\
X\ar[rr]^e & & Y
}
\]
where $e$ is an elementary embedding.
\item[{\rm (b)}]
Every object $Y\in\Ce$ has a subobject $X\in\Ce\cap H(\kappa)$.
\end{itemize}
\end{theorem}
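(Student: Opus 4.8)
The plan is to derive both statements from the existence of a suitable elementary embedding applied to a Löwenheim–Skolem style argument inside $H(\kappa)$. First I would observe that the cardinal hypotheses on $\kappa$ — regularity, $\{p,\Sigma\}\in H(\kappa)$, and $\nu^{<\lambda}<\kappa$ for all $\nu<\kappa$ — are exactly the conditions guaranteeing that $H(\kappa)$ is closed under $<\lambda$-sequences and contains $\lambda$, $\Sigma$, $p$, so that by Proposition~\ref{nonabsolute} the notion of being a $\Sigma$-structure is absolute between $H(\kappa)$ and $V$, and by Theorem~\ref{lemma1} any elementary embedding with critical point above $\lambda$ and with $\{p,\Sigma\}\in V_{\mathrm{crit}}$ is supported by $\Ce$ in the relevant sense. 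The key point is that $\Ce$ is $\Sigma_1$, hence \emph{upward absolute}, so membership in $\Ce$ is preserved when we pass from a small model up to $V$.

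For part (a): given $g\colon A\to Y$ with $A\in H(\kappa)$ and $Y\in\Ce$, I would take a regular cardinal $\theta$ large enough that $A$, $Y$, $g$, $\Sigma$, $p$ all lie in $H(\theta)$, and inside $H(\theta)$ build an increasing elementary chain (or just a single elementary submodel) $N\preceq H(\theta)$ with $A\cup\{A,Y,g,\Sigma,p\}\subseteq N$, $|N|<\kappa$, and — crucially — $N$ closed under $<\lambda$-sequences; this last closure is arranged by iterating the Skolem-hull construction $\lambda$ many times and using $\nu^{<\lambda}<\kappa$ to keep the cardinality below $\kappa$. Let $\pi\colon N\to \bar N$ be the transitive collapse. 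Since $A$ and its elements are all in $N$ and $|A|<\kappa$, one checks $\pi$ fixes $A$ (and fixes $\Sigma$, $p$, $\lambda$ as these are in $H(\kappa)\subseteq$ fixed points). Set $X=\pi(Y)$ and $f=\pi(g)\colon A\to X$; the inverse of the collapse restricted to $X$ is an elementary embedding $e\colon X\to Y$ with $g=e\circ f$. Because $N\preceq H(\theta)$ and "$Y\in\Ce$" — or rather the $\Sigma_1$ witness for $Y\in\Ce$ with parameter $p$ — reflects down to $\bar N$ and is then upward absolute from the transitive set $\bar N$ to $V$, we get $X\in\Ce$; and $X\in H(\kappa)$ since $|\bar N|<\kappa$. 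The triangle and the elementarity of $e$ complete (a).

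For part (b): apply (a) with $A$ taken to be a sufficiently small substructure of $Y$ that generates enough of $Y$ — concretely, pick any $A\in H(\kappa)$ that is a $\Sigma$-substructure of $Y$ (for instance the substructure generated by a single element, using $\lambda$-ary operations and $\nu^{<\lambda}<\kappa$ to see such substructures live in $H(\kappa)$) and let $g\colon A\hookrightarrow Y$ be the inclusion. Part (a) produces $f\colon A\to X$ and an elementary embedding $e\colon X\to Y$ with $e\circ f=g$; in particular $e$ is a monomorphism, so $X$ (via $e$) is a subobject of $Y$, with $X\in\Ce\cap H(\kappa)$ as required.

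The main obstacle, and the step deserving the most care, is the closure of the elementary submodel $N$ under $<\lambda$-sequences while keeping $|N|<\kappa$: this is where the hypothesis $\nu^{<\lambda}<\kappa$ for all $\nu<\kappa$ is indispensable, and it is what allows the transitive collapse $\bar N$ to be a transitive set closed under $<\lambda$-sequences, so that Proposition~\ref{nonabsolute} applies and the absoluteness/upward-absoluteness bookkeeping for "$X$ is a $\Sigma$-structure" and "$X\in\Ce$" goes through. Everything else — the collapse fixing $A$, $\Sigma$, $p$, the factorization of $g$, the elementarity of $e$ — is routine once the model $N$ has been built with the right properties.
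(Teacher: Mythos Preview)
Your proposal is correct and follows essentially the same L\"owenheim--Skolem plus transitive collapse argument as the paper: build a small elementary submodel $N\preceq H(\theta)$ closed under $<\lambda$-sequences (using $\nu^{<\lambda}<\kappa$), collapse, and use upward absoluteness of $\Sigma_1$ to see $X=\pi(Y)\in\Ce$. Two small points: for the collapse to fix $A$, $\Sigma$, $p$ you need the full \emph{transitive closure} of $\{A,\Sigma,p\}$ inside $N$, not merely $A\cup\{A,\Sigma,p\}$ --- the paper makes this explicit; and for part~(b) the paper simply takes $A=\emptyset$ (equivalently, drops $A$ from the construction), which is cleaner than producing a small substructure, since the elementary embedding $e$ is already injective and hence exhibits $X$ as a subobject of $Y$.
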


\begin{proof}
We only have to prove (a), since (b) then follows with $A=\emptyset$. Note that every elementary embedding of $\Sigma$\nobreakdash-structures is an injective homomorphism and, since $\Ce$ is a full subcategory, $e\colon X\to Y$ is in~$\Ce$, so $X$ is a subobject of $Y$, since, in a subcategory of sets, every injective morphism is a monomorphism; see~\cite[Proposition~7.37]{AHS}.

Thus, suppose that $\Ce$, viewed as a class, is definable as $\mathcal{C}=\{ x : \varphi (x, p)\}$, where $\varphi$ is $\Sigma_1$ 
and $p\in H(\kappa)$. 
Given $g\colon A\to Y$ with $A\in H(\kappa)$ and $Y\in\mathcal{C}$, let $\mu$ be a regular cardinal bigger than $\kappa$ such that $Y\in H(\mu)$ 
and such that $H(\mu)\models \varphi(Y,p)$. 

In this situation, the L\"owen\-heim--Skolem Theorem implies the existence of an elementary substructure $\langle N,\in\rangle$ 
of~$\langle H(\mu),\in\rangle$ 
of cardinality smaller than~$\kappa$ and  closed under sequences of length less than~$\lambda$ (here we use the assumption that $\nu^{<\lambda} <\kappa$ for all $\nu <\kappa$) such that $g\in N$ and with the transitive closure 
of $\{p,\Sigma,A\}$ contained in~$N$. 
By elementarity, $g$ is a homomorphism of $\Sigma$\nobreakdash-structures in $N$ and $N\models \varphi(Y,p)$.

Let $M$ be the transitive collapse of~$N$,
and let $j\colon M\to N$ be the isomorphism given by the collapse;
that is, $j$ is inverse to the function $\pi\colon N\to M$ given by $\pi(x)=\{\pi(z) : z\in x\}$;
see~\cite[6.13]{J2}. Since $N$ is closed under sequences of length less than~$\lambda$, so is $M$, and the critical point of $j$ is greater than or equal to $ \lambda$. And since $N$ contains the transitive closure of $\{p,\Sigma,A\}$, we have that $\pi(p)=p$, $\pi(\Sigma)=\Sigma$ and $\pi(A)=A$. Moreover, the restriction $j\restriction A$ is the identity.

Now let $X\in M$ be such that $j(X)=Y$ and let $f\colon A\to X$ be such that $j(f)=g$. Then $X\in H(\kappa)$ since $|M|<\kappa$ and $M$ is transitive. 
Since $j$ is an isomorphism and $j(p)=p$, we infer that $M\models \varphi (X,p)$,
and hence, as $\Sigma_1$ formulas are upward absolute for transitive classes, we  conclude that $X\in \mathcal{C}$ in~$V$. Since $j(\Sigma)=\Sigma$ and $M$ and $N$ are closed under sequences of length less than~$\lambda$, the object $X$ is a $\Sigma$\nobreakdash-structure and, since $j$ is an isomorphism,
the restriction $e=j\restriction X$ is an elementary embedding, hence a homomorphism of $\Sigma$\nobreakdash-structures.
Moreover, $f$ is also a homomorphism and the
triangle commutes since $f$ has been defined so that $g(a)=j(f(a))$ for all $a\in A$.
\end{proof}

Recall that a cardinal $\kappa$ is \emph{$\lambda$\nobreakdash-supercompact} if there is an 
elementary embedding $j\colon V\to M$ with $M$ transitive and 
with critical point~$\kappa$, such that $j(\kappa)>\lambda$ and $M$ is closed under $\lambda$\nobreakdash-sequences.
Note that it then follows that $H(\lambda)\in M$. 
A~cardinal $\kappa$ is called \emph{supercompact} if it is $\lambda$\nobreakdash-supercompact for all ordinals~$\lambda$.

The following theorem is an upgraded version of \cite[Theorem~4.5]{BCM},
where a similar result was proved for absolute classes.

\begin{theorem}
\label{theorem1}
Let $\Ce$ be a full subcategory of $\Sigma$\nobreakdash-structures definable by a $\Sigma_2$ formula with a set $p$ of parameters.
Suppose that there exists a supercompact cardinal~$\kappa$ bigger than 
the rank of $p$ and~$\Sigma$.
Then the following hold:
\begin{itemize}
\item[{\rm (a)}]
For every homomorphism $g\colon A\to Y$ of $\Sigma$\nobreakdash-structures with $A\in V_{\kappa}$ and $Y\in\Ce$ there is a homomorphism $f\colon A\to X$ with $X\in\Ce\cap V_{\kappa}$ and an elementary embedding $e\colon X\to Y$ with $e\circ f=g$. 
\item[{\rm (b)}]
Every object $Y\in\Ce$ has a subobject $X\in\Ce\cap V_{\kappa}$.
\end{itemize}
\end{theorem}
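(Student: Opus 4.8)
First I would follow the pattern of the proof of Theorem~\ref{vpsigma1}: it suffices to prove~(a), since statement~(b) then follows by applying~(a) with $A$ the initial $\Sigma$\nobreakdash-structure (which lies in $V_\kappa$ because $\Sigma\in V_\kappa$), using that an elementary embedding of $\Sigma$\nobreakdash-structures is an injective homomorphism and hence, $\Ce$ being a full subcategory of ${\bf Str}\,\Sigma$, a monomorphism in~$\Ce$, so that the resulting $X$ is a subobject of~$Y$. The only feature of a $\Sigma_2$ formula that the argument will use is that the defining formula of~$\Ce$ can be written as $\varphi(x,p)=\exists z\,\psi(z,x,p)$ with $\psi$ a $\Pi_1$ formula; the point to overcome is that such a $\varphi$ is neither upward nor downward absolute, so the collapse\nobreakdash-of\nobreakdash-an\nobreakdash-elementary\nobreakdash-submodel trick of Theorem~\ref{vpsigma1} is not available, and this is exactly where a supercompact cardinal is needed.

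For the set-up, given $g\colon A\to Y$ with $A\in V_\kappa$ and $Y\in\Ce$, I would fix a set $b$ witnessing that $\psi(b,Y,p)$ holds in~$V$, choose a strong limit cardinal $\lambda$ greater than~$\kappa$ and than the ranks of $b$, $Y$ and~$g$, and use the supercompactness of~$\kappa$ to obtain an elementary embedding $j\colon V\to M$ with critical point~$\kappa$, with $j(\kappa)>\lambda$, and with $M$ closed under $\lambda$\nobreakdash-sequences. Because $\lambda$ is a strong limit and $M^{\lambda}\subseteq M$, one gets $V_{\lambda}\subseteq M$, so that $b$, $Y$, $A$ and~$g$ all belong to~$M$; and because $\kappa$ is the critical point, $j$ fixes every element of $V_{\kappa}$, whence $j(p)=p$, $j(\Sigma)=\Sigma$, $j(A)=A$, $j(a)=a$ for all $a\in A$, and $j(g)=(j\restriction Y)\circ g$. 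Writing $\lambda_0$ for the regular cardinal with $\Sigma$ being $\lambda_0$\nobreakdash-ary, we have $\lambda_0<\kappa$ since $\kappa>\mathrm{rank}(\Sigma)$, so $M$ is in particular closed under sequences of length less than~$\lambda_0$.

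The core of the proof is a reflection argument. Let $\Theta$ be the statement, with parameters $\kappa$, $p$, $\Sigma$, $A$, $Y$, $g$, asserting that there exist $X$, $f$, $e$ with $\mathrm{rank}(X)<\kappa$, with $\varphi(X,p)$, with $f$ a homomorphism of $\Sigma$\nobreakdash-structures from $A$ to~$X$, with $e$ an elementary embedding of $\Sigma$\nobreakdash-structures from $X$ to~$Y$, and with $e\circ f=g$ (the predicate ``$e$ is an elementary embedding of $\Sigma$\nobreakdash-structures'' being first-order expressible through the satisfaction relation of $\Le_{\lambda_0}(\Sigma)$, as in Theorem~\ref{lemma0}). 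Then $j(\Theta)$ is the same statement with $\kappa$, $Y$, $g$ replaced by $j(\kappa)$, $j(Y)$, $j(g)$, the remaining parameters being fixed by~$j$, and I would show that $M\models j(\Theta)$ by taking $X=Y$, $f=g$, $e=j\restriction Y$. Here $\mathrm{rank}(Y)<\lambda<j(\kappa)$; the formula $\varphi(Y,p)$ holds in~$M$ because $\psi$ is $\Pi_1$, hence downward absolute, and $\psi(b,Y,p)$ holds in~$V$ with $b\in M$; $M$ sees $g$ as a homomorphism of $\Sigma$\nobreakdash-structures from $A$ to~$Y$ by the absoluteness in Proposition~\ref{nonabsolute}(a), since $M$ is transitive, closed under $<\!\lambda_0$\nobreakdash-sequences and contains~$\Sigma$; and $j\restriction Y\colon Y\to j(Y)$ is an elementary embedding of $\Sigma$\nobreakdash-structures by Theorem~\ref{lemma0}, a fact that holds in~$M$ as well because satisfaction of formulas of $\Le_{\lambda_0}(\Sigma)$ is absolute between $M$ and~$V$. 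Finally $e\circ f=(j\restriction Y)\circ g=j(g)$. By elementarity of~$j$, $\Theta$ holds in~$V$, and this is precisely the content of~(a): the resulting $X$ has rank below~$\kappa$, so $X\in V_\kappa$, and $\varphi(X,p)$ holds in~$V$, so $X\in\Ce$.

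I expect the main obstacle to be arranging that $Y$ itself may serve as the ``one level up'' witness inside~$M$ for the $\Sigma_2$\nobreakdash-property of belonging to~$\Ce$. This is what forces the witness $b$, together with $Y$ and~$g$, to lie in~$M$, and hence forces $\lambda$ to be a strong limit large enough that $V_\lambda\subseteq M$; it is also the reason the argument must reflect the $\Pi_1$ subformula $\psi$ downward into~$M$ rather than the $\Sigma_2$ formula~$\varphi$ itself. A secondary subtlety, already present in Theorem~\ref{lemma0}, is to make sure that the assertion ``$j\restriction Y$ is an elementary embedding of $\Sigma$\nobreakdash-structures'' is genuinely true in~$M$ and not merely in~$V$, which is exactly what the absoluteness of infinitary satisfaction between $M$ and~$V$ provides.
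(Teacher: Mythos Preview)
Your proof is correct and follows the same overall architecture as the paper's: choose a $\lambda$-supercompactness embedding $j\colon V\to M$ with $j(\kappa)>\lambda$ for suitably large~$\lambda$, exhibit $X=Y$, $f=g$, $e=j\restriction Y$ as witnesses in~$M$ for the $j$-image of the desired statement, and pull back by elementarity. The one point where you diverge is the verification that $M\models\varphi(Y,p)$. The paper takes $\mu$ with $H(\mu)\preceq_2 V$, argues that $H(\mu)\preceq_1 M$ (since $H(\mu)^M=H(\mu)$ and $\Sigma_1$ formulas go up from $M$ to $V$ and back down to $H(\mu)$), and then transfers the $\Sigma_2$ fact $\varphi(Y,p)$ upward from $H(\mu)$ to~$M$. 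You instead fix a concrete $\Pi_1$ witness $b$ for $\exists z\,\psi(z,Y,p)$, arrange $b\in M$ by taking $\lambda$ strong limit so that $V_\lambda\subseteq M$, and invoke downward absoluteness of~$\psi$. Your route is slightly more direct and avoids the detour through $\Sigma_2$-correct $H(\mu)$; the paper's route, on the other hand, packages the transfer in a way that generalizes more visibly to the $\Sigma_{n+2}$ case treated later via $C(n)$-extendibles, where one reflects through $V_\lambda$ with $\lambda\in C(n+2)$ rather than carrying individual witnesses.
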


\begin{proof}
As with Theorem~\ref{vpsigma1}, we only have to prove~(a), since (b) follows by taking $A=\emptyset$.
Suppose that $\kappa$ is a supercompact cardinal for which $\{p,\Sigma,A\}\in V_{\kappa}$.
Then, since $\kappa$ is strongly inaccessible, we have $V_{\kappa}=H(\kappa)$ and, since $\kappa$ is regular, it is bigger than the supremum of the ordinals of the arities of all the operation symbols and relation symbols of $\Sigma$, so $\Sigma$ is $\kappa$\nobreakdash-ary.

Given a homomorphism $g\colon A\to Y$ with $Y\in \Ce$, let $\mu$ be a cardinal bigger than $\kappa$ such that $Y\in H(\mu)$ and
$H(\mu)\preceq_{2}V$. Let $j\colon V\to M$ be an elementary embedding with $M$ transitive and  
critical point~$\kappa$, such that $j(\kappa)>\mu$ and $M$ is closed under $\mu$\nobreakdash-sequences.
Then $j(A)=A$ since $A$ is in $H(\kappa)$, and $g$ and the restriction $j\restriction Y:Y\to j(Y)$ are in~$M$ because $A,Y\in M$ and $M$ is closed under $\mu$\nobreakdash-sequences. In addition, $g\colon A\to Y$ is a homomorphism of $\Sigma$\nobreakdash-structures in~$M$, since, by Proposition~\ref{nonabsolute}, 
being a homomorphism of $\kappa$\nobreakdash-ary $\Sigma$\nobreakdash-structures is absolute for transitive classes
containing $\Sigma$ and closed under sequences of length less than~$\kappa$.
Moreover, by Theorem~\ref{lemma0}, since $\Sigma\in V_{\kappa}$, the restriction $j\restriction Y:Y\to j(Y)$ is an elementary
embedding of $\Sigma$\nobreakdash-structures.

Since being a cardinal is $\Pi_1$ and hence downward absolute,
$\mu$ is a cardinal in~$M$, and this implies that $H(\mu)$ 
in the sense of $M$ coincides with $H(\mu)$. It follows that $H(\mu)\preceq_{1}M$, since
every $\Sigma_1$ sentence $\psi$ which holds in $M$ also holds in $V$ (as $\Sigma_1$
sentences are upward absolute) and therefore $\psi$ holds in $H(\mu)$ because
$H(\mu)\preceq_{2}V$. Hence, $\Sigma_2$ formulas are upward absolute between $H(\mu)$~and~$M$.
Since $H(\mu)\preceq_{2}V$ and the class $\Ce$ is defined by a $\Sigma_2$ formula~$\varphi(x,y)$, 
we have that $H(\mu)\models \varphi(Y,p)$ and thus $M\models\varphi(Y,p)$.

Now $\text{rank}(Y)<\mu<j(\kappa)$ in $V$ and also in~$M$.
Thus, as witnessed by $g\colon A\to Y$, in $M$ there exists a homomorphism $f\colon A\to X$ of $\Sigma$\nobreakdash-structures such that $\text{rank}(X)<j(\kappa)$
and $\varphi(X,p)$ holds, and there is an elementary embedding $e\colon X\to j(Y)$ such that $e\circ f=j(g)$.
By elementarity of~$j$, the corresponding statement is true in~$V$; that is, 
there exists a homomorphism of $\Sigma$\nobreakdash-structures $f\colon A\to X$ such that $\text{rank}(X)<\kappa$ and $\varphi(X,p)$ holds, so $X\in\Ce$,
and there is an elementary embedding $e\colon X\to Y$ with $e\circ f=g$, as we wanted to prove.
\end{proof}

Theorem~\ref{theorem1} tells us that the existence
of arbitrarily large supercompact cardinals implies that
VP holds for {\boldmath$\Sigma_2$} proper classes.
The following theorem yields a strong converse of this fact.

\begin{theorem}
\label{proposition000}
Suppose that, for every {\boldmath$\Delta_2$} proper class
$\Ce$ of structures in the language of set theory with one additional constant symbol,
there exist distinct $X$ and $Y$ in~$\Ce$ and an elementary embedding of $X$ into~$Y$.
Then there exists a proper class of supercompact cardinals.
\end{theorem}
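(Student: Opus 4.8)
The plan is to prove the contrapositive: assuming there is no proper class of supercompact cardinals, I will produce a {\boldmath$\Delta_2$} proper class $\Ce$ of structures in the language of set theory with one additional constant symbol such that there is no elementary embedding between two distinct members of $\Ce$, contradicting the hypothesis. So fix an ordinal $\theta$, which we may take to lie in $C(2)$, such that no cardinal greater than $\theta$ is supercompact; this $\theta$ will be the only parameter in the definition of $\Ce$. The tool is Magidor's characterization of supercompactness \cite{M}: a cardinal $\kappa$ is supercompact if and only if for every ordinal $\lambda>\kappa$ there are ordinals $\bar\kappa<\bar\lambda<\kappa$ and an elementary embedding $j\colon V_{\bar\lambda}\to V_\lambda$ with critical point $\bar\kappa$ such that $j(\bar\kappa)=\kappa$. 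Since every cardinal $\kappa>\theta$ fails to be supercompact, for each such $\kappa$ one fixes an ordinal $\lambda_\kappa>\kappa$ witnessing this failure (say, the least one, or the least one lying in $C(1)$): there is no elementary embedding $j\colon V_{\bar\lambda}\to V_{\lambda_\kappa}$ with $\bar\lambda<\kappa$, $\text{crit}(j)<\kappa$ and $j(\text{crit}(j))=\kappa$. I would then let $\Ce$ consist of the structures $A_\kappa=\langle V_{\lambda_\kappa+\omega},\in,\kappa\rangle$ for $\kappa>\theta$, the distinguished element being $\kappa$. The extra $\omega$ levels are included so that $V_{\lambda_\kappa}$ is an element of $A_\kappa$ and $\lambda_\kappa$ is definable in $A_\kappa$ as the largest limit ordinal $\mu$ with $V_\mu$ a set; this makes the defining property of $\lambda_\kappa$ absolute between $A_\kappa$ and $V$, since any candidate embedding $j\colon V_{\bar\lambda}\to V_{\lambda_\kappa}$ and its domain lie inside $V_{\lambda_\kappa+\omega}$.

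That $\Ce$ is a proper class is immediate, since the $A_\kappa$ are pairwise non-isomorphic (their distinguished elements are distinct ordinals). The crucial and, I expect, most laborious point is to check that $\Ce$ is {\boldmath$\Delta_2$} with parameter $\theta$. This requires bounding the complexity of ``$X=A_\kappa$ for some $\kappa>\theta$'': one uses that ``$A=V_\mu$ for some ordinal $\mu$'' is of low complexity as an assertion about $A$ (it is equivalent to ``$A$ is transitive and, for all $x$, $x\in A\leftrightarrow(x\subseteq A\wedge\text{rank}(x)\in A)$''), that ``$\mu\in C(1)$'' is $\Pi_1$, and that — once $V_{\lambda+\omega}$ is available as the set $A$ — the assertion that $\lambda$ is the least Magidor witness for $\kappa$ becomes essentially bounded, all relevant embeddings and their domains being elements of $A$ and elementarity of a map between set models being absolute. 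Assembling these bounds, and handling with care the interplay of the existential quantifiers over $\kappa$ and $\lambda$ with these clauses, yields a {\boldmath$\Delta_2$} definition; it is precisely this computation that dictates the exact form the witnesses $\lambda_\kappa$ must take.

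Finally I would derive the contradiction. Applying the hypothesis to $\Ce$ gives distinct members $A_\kappa$ and $A_{\kappa'}$ and an elementary embedding $j\colon A_\kappa\to A_{\kappa'}$; then $j(\kappa)=\kappa'$, and since elementary embeddings never decrease ordinals and $\kappa\ne\kappa'$ we have $\kappa<\kappa'$. Restricting $j$ to the set $V_{\lambda_\kappa}$ (definable in $A_\kappa$, with image $V_{\lambda_{\kappa'}}$) gives an elementary embedding $\bar\jmath\colon V_{\lambda_\kappa}\to V_{\lambda_{\kappa'}}$ with $\bar\jmath(\kappa)=\kappa'$. For this to be an embedding forbidden by the choice of $\lambda_{\kappa'}$ two things are needed: that the domain level $\lambda_\kappa$ is below $\kappa'$, and that $\bar\jmath(\text{crit}(\bar\jmath))=\kappa'$. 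The first is arranged by passing first, via the standard consequence of Vop\v{e}nka-type principles that a proper class of structures contains a coherent chain of elementary embeddings with strictly increasing ranks \cite[Ch.\,6]{AR}, and then thinning that chain (the ordinals $\kappa$ occurring in $\Ce$ are cofinal), so that one may assume $\lambda_\kappa<\kappa'$. The second — forcing $\text{crit}(\bar\jmath)=\kappa$, equivalently $\bar\jmath(\text{crit}(\bar\jmath))=\kappa'$ — is the delicate point, to be handled through the design of the structures and, if necessary, a further restriction of $\bar\jmath$. With both in place, $\bar\jmath\colon V_{\lambda_\kappa}\to V_{\lambda_{\kappa'}}$ is an elementary embedding with $\lambda_\kappa<\kappa'$ and $\bar\jmath(\text{crit}(\bar\jmath))=\kappa'$, contradicting the choice of $\lambda_{\kappa'}$ as a Magidor witness for the non-supercompactness of $\kappa'$.

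Thus the two steps I expect to be the real work are the complexity bookkeeping showing that $\Ce$ is genuinely {\boldmath$\Delta_2$} (rather than merely {\boldmath$\Sigma_2$} or higher), which constrains how the witnessing levels $\lambda_\kappa$ and the membership conditions must be phrased, and the control of the critical point of the reflecting embedding so that it indeed violates a Magidor witness.
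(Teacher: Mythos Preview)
Your overall strategy matches the paper's: argue by contrapositive, build a {\boldmath$\Delta_2$} proper class of structures $\langle V_\mu,\in,c\rangle$ indexed by failure witnesses above a fixed bound, and derive a contradiction from an elementary embedding between two of them. But the two points you yourself flag as ``the real work'' are genuine gaps as stated, and the paper resolves them by different---and cheaper---devices than your sketch suggests.

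\medskip
\noindent\textbf{The domain-ordering problem.} You need $\lambda_\kappa<\kappa'$ so that $\bar\jmath$ qualifies as a Magidor reflection for~$\kappa'$. Your proposed fix, invoking a coherent chain via \cite[Ch.\,6]{AR}, is not available: that construction uses full VP, whereas your hypothesis yields only \emph{one} embedding per {\boldmath$\Delta_2$} class. Iterating on tail classes gives new pairs, never a chain, and there is no evident way to force $\lambda_\kappa<\kappa'$ for the pair you are actually handed. The paper sidesteps this completely by extracting the contradiction on the \emph{source} side rather than the target side. It defines a club $C_0$ closed under the function $F$ sending $\zeta$ to the least $\lambda$ such that no $\kappa\in(\xi,\zeta]$ is $\lambda$\nobreakdash-supercompact, and chooses $\lambda_\alpha\in C_0$. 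From a single embedding $j\colon V_{\lambda_\alpha+2}\to V_{\lambda_\beta+2}$, Magidor's Lemma~2 \cite{M} directly gives that $\mathrm{crit}(j)$ is $\lambda_\alpha$\nobreakdash-supercompact; no relation between $\lambda_\alpha$ and $\beta$ is needed. Since $\lambda_\alpha\in C_0$ one has $F(\mathrm{crit}(j))<\lambda_\alpha$, contradiction.

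\medskip
\noindent\textbf{The critical-point problem.} You need $\mathrm{crit}(\bar\jmath)=\kappa$, i.e.\ that the distinguished constant is exactly the critical point; nothing in your structures forces this, and you say only that it is ``to be handled.'' The paper needs far less: merely $\mathrm{crit}(j)>\xi$. This it obtains by taking as constant not $\alpha$ alone but the pair $\langle\alpha,R\rangle$ with $R$ a rigid binary relation on $\xi+1$; then $j(R)=R$ forces $j\restriction(\xi+1)=\mathrm{id}$, so $\mathrm{crit}(j)>\xi$, which is all that is required for $F(\mathrm{crit}(j))$ to be defined and to yield the contradiction.

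\medskip
In short, your Magidor-characterization approach on the target side demands control (over domain height and over the exact value of the critical point) that the hypothesis does not supply. The paper uses Magidor's direct supercompactness lemma on the source side, where the only control needed is $\mathrm{crit}(j)>\xi$ and $\lambda_\alpha\in C_0$---supplied by a rigid relation and a club closure respectively.
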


\begin{proof}
Let $\xi$ be any ordinal and
suppose, towards a contradiction, that there are no supercompact cardinals bigger than~$\xi$.
Then the class function $F$ given as follows is well defined on ordinals $\zeta > \xi$:
$F(\zeta)$ equals the least cardinal $\lambda > \zeta$ 
such that no cardinal $\kappa$ such that $\xi <\kappa \leq \zeta$ is $\lambda$\nobreakdash-supercompact.
Since the assertion ``$\zeta$ is $\lambda$\nobreakdash-supercompact'' is $\Delta_2$ in ZFC (see~\cite[\S 22]{K}), 
$F$ is {\boldmath$\Delta_2$}\nobreakdash-definable with $\xi$ as a parameter.
Let 
\[
C_0=\{ \alpha : \mbox{$\alpha$ is a limit ordinal, $\xi<\alpha$, and $\forall \zeta \, ( \xi < \zeta <\alpha 
\to F(\zeta)<\alpha )$}\}.
\] 
Then $C_0$ is a club class {\boldmath$\Delta_2$}\nobreakdash-definable with $\xi$ as a parameter.

Fix a rigid binary relation (i.e., a rigid graph) $R$ on $\xi +1$ (see~\cite{Ne}). 
For each ordinal~$\alpha$, let $\lambda_\alpha$ be the least element of $C_0$ greater than~$\lambda$. 
The proper class $\Ce=\{ \langle V_{\lambda_\alpha +2},\in , \langle \alpha , R\rangle \rangle : \alpha >\xi\}$ 
is {\boldmath$\Delta_2$}-definable with $R$ as a parameter.
By our assumption, there exist $\alpha < \beta$ greater than $\xi$ and an elementary embedding 
\[
j : \langle V_{\lambda_\alpha +2},\in , \langle \alpha , R \rangle \rangle \longrightarrow 
\langle V_{\lambda_\beta +2}, \in , \langle \beta ,R\rangle \rangle .
\] 
Since $j$ must send $\alpha$ to~$\beta$, it is not the identity. Hence, by Kunen's Theorem (\cite[Theorem~17.7]{J2}, \cite{Kun1}), 
we have $\lambda_\alpha <\lambda_\beta$. Let $\kappa\leq \alpha$ be the critical point of~$j$. 
Then, as in \cite[Lemma~2]{M}, it follows that $\kappa$ is $\lambda_\alpha$\nobreakdash-supercompact. 
But this is impossible, since $F(\kappa) < \lambda_\alpha$ because $\lambda_\alpha \in C_0$.
\end{proof}

In order to summarize what we have proved so far,
we introduce some useful notation. Let $\Gamma$ be one of $\Sigma_n$, $\Pi_n$, $\Delta_n$, $\Sigma_n\wedge\Pi_n$ or
{\boldmath$\Sigma_n$}, {\boldmath$\Pi_n$}, {\boldmath$\Delta_n$}, {\boldmath$\Sigma_n\wedge\Pi_n$},
for any~$n$. 
For an infinite cardinal $\kappa$ and a signature 
$\Sigma\in H(\kappa)$, we write:

\bigskip

\noindent
${\rm VP}^{\Sigma}(\Gamma)$:
\emph{For every $\Gamma$ proper class $\Ce$ of $\Sigma$\nobreakdash-structures, 
there exist distinct $X$ and $Y$ in $\Ce$ and an elementary embedding of $X$ into~$Y$}.

\bigskip

\noindent
${\rm SVP}_{\kappa}^{\Sigma}(\Gamma)$:
\emph{For every proper class $\Ce$ of $\Sigma$\nobreakdash-structures admitting
a $\Gamma$ definition whose parameters, if any, are in~$H(\kappa)$, and for every $Y\in \Ce$, 
there exists $X\in \Ce \cap H(\kappa)$ and an elementary embedding of $X$ into~$Y$}.

\bigskip

If $\Sigma$ is omitted from the notation, we mean
that the corresponding statement holds for all admissible signatures. Thus, ${\rm VP}(\Gamma)$
means ${\rm VP}^{\Sigma}(\Gamma)$ for all~$\Sigma$, while ${\rm SVP}_{\kappa}(\Gamma)$ means
${\rm SVP}_{\kappa}^{\Sigma}(\Gamma)$ for every $\Sigma\in H(\kappa)$.

Even though ${\rm SVP}_{\kappa}^{\Sigma}(\Gamma)$ is an apparently  
stronger statement than ${\rm VP}^{\Sigma}(\Gamma)$ (hence the notation SVP), 
in the case of {\boldmath$\Sigma_2$} classes of structures they turn out  
to be equivalent, as we next prove.

\begin{corollary}
\label{corollary0}
The following statements are equivalent:
\begin{enumerate}
\item ${\rm SVP}_{\kappa}(\mathbf{\Sigma_2})$ holds for a proper class of cardinals~$\kappa$.
\item ${\rm VP}(\mathbf{\Sigma_2})$ holds.
\item ${\rm VP}^{\Sigma}(\mathbf{\Delta_2})$ holds if $\Sigma$ is
the signature of the language of set theory with one additional constant symbol.
\item There exists a proper class of supercompact cardinals.
\end{enumerate}
\end{corollary}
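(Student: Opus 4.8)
The plan is to prove the cyclic chain of implications $(4)\Rightarrow(1)\Rightarrow(2)\Rightarrow(3)\Rightarrow(4)$, so that the substantive content is carried entirely by Theorems~\ref{theorem1} and~\ref{proposition000}, which are already available.

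To obtain $(4)\Rightarrow(1)$, I would show that ${\rm SVP}_{\kappa}(\mathbf{\Sigma_2})$ holds whenever $\kappa$ is supercompact; since by hypothesis the supercompact cardinals form a proper class, this yields~$(1)$. So I fix a supercompact cardinal~$\kappa$, a signature $\Sigma\in H(\kappa)$, and a proper class $\Ce$ of $\Sigma$\nobreakdash-structures defined by a $\Sigma_2$ formula with a set $p\in H(\kappa)$ of parameters. Since $\kappa$ is strongly inaccessible we have $H(\kappa)=V_{\kappa}$, so $p$ and $\Sigma$ have rank smaller than~$\kappa$ and Theorem~\ref{theorem1} applies. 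Given $Y\in\Ce$, I feed Theorem~\ref{theorem1}(a) the (small, hence in $V_{\kappa}$) substructure $A$ of $Y$ generated by the interpretations of the constant symbols of~$\Sigma$ --- or $A=\emptyset$ if $\Sigma$ has no constants --- together with the inclusion $g\colon A\to Y$; the output is an object $X\in\Ce\cap V_{\kappa}=\Ce\cap H(\kappa)$ and an elementary embedding $e\colon X\to Y$. This is precisely what ${\rm SVP}_{\kappa}^{\Sigma}(\mathbf{\Sigma_2})$ demands, and letting $\Sigma$ range over $H(\kappa)$ gives ${\rm SVP}_{\kappa}(\mathbf{\Sigma_2})$.

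For $(1)\Rightarrow(2)$, let $\Sigma$ be a signature and $\Ce$ a proper class of $\Sigma$\nobreakdash-structures defined by a $\Sigma_2$ formula with a set $p$ of parameters. By $(1)$ the cardinals for which ${\rm SVP}_{\kappa}(\mathbf{\Sigma_2})$ holds are unbounded in the ordinals, so I can choose one with $\{p,\Sigma\}\subseteq H(\kappa)$; then ${\rm SVP}_{\kappa}^{\Sigma}(\mathbf{\Sigma_2})$ applies to~$\Ce$. Since $\Ce$ is a proper class while $H(\kappa)$ is a set, there exists $Y\in\Ce\setminus H(\kappa)$, and ${\rm SVP}_{\kappa}^{\Sigma}(\mathbf{\Sigma_2})$ then furnishes $X\in\Ce\cap H(\kappa)$ with an elementary embedding $X\to Y$; as $X\in H(\kappa)$ and $Y\notin H(\kappa)$, the structures are distinct, which is exactly the conclusion of ${\rm VP}^{\Sigma}(\mathbf{\Sigma_2})$. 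The step $(2)\Rightarrow(3)$ is immediate, since a $\mathbf{\Delta_2}$ proper class is in particular a $\mathbf{\Sigma_2}$ proper class, so ${\rm VP}^{\Sigma}(\mathbf{\Delta_2})$ is a special case of ${\rm VP}(\mathbf{\Sigma_2})$. Finally, $(3)\Rightarrow(4)$ is nothing other than Theorem~\ref{proposition000}, whose hypothesis, once the notation is unwound, is exactly statement~$(3)$.

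The only place calling for real care is $(1)\Rightarrow(2)$: ${\rm SVP}$ produces a small structure with an embedding into the given one, whereas ${\rm VP}$ requires two \emph{distinct} structures; this gap is bridged by applying ${\rm SVP}$ to a $Y$ chosen outside $H(\kappa)$, which is possible precisely because $\Ce$ is a proper class. Everything else is bookkeeping --- verifying the rank and absoluteness hypotheses of Theorem~\ref{theorem1} and using that $H(\kappa)=V_{\kappa}$ for strongly inaccessible~$\kappa$ --- and I anticipate no further obstacle.
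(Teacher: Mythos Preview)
Your proof is correct and follows essentially the same cyclic strategy as the paper's. One small quibble: in $(4)\Rightarrow(1)$, the substructure $A$ of $Y$ generated by constants need not lie in $V_\kappa$ (its elements may have large rank even if its cardinality is small), but this is harmless since you can simply invoke part~(b) of Theorem~\ref{theorem1} directly, as the paper does.
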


\begin{proof}
In order to check that (1)~$\Rightarrow$~(2), suppose that (1) is true,
and let $\Sigma$ be any signature. Let $\Ce$ be any proper class of $\Sigma$\nobreakdash-structures
defined by a $\Sigma_2$ formula with parameters, and let $\kappa$ be bigger than the ranks
of the parameters and such that ${\rm SVP}_{\kappa}^{\Sigma}(\mathbf{\Sigma_2})$ holds.
Since $\Ce$ is a proper class, we may choose $Y$ of rank bigger than~$\kappa$,
so any $X\in\Ce\cap H(\kappa)$ will necessarily be distinct from~$Y$.
Hence, there exist distinct $X$ and $Y$ such that $X$ is elementarily embeddable into~$Y$,
so ${\rm VP}^{\Sigma}(\mathbf{\Sigma_2})$ holds, as needed.
The implication (2)~$\Rightarrow$~(3) is trivial, and
Theorem~\ref{proposition000} implies that (3)~$\Rightarrow$~(4).
Finally, to see that (4)~$\Rightarrow$~(1), let $\xi$ be any cardinal and
pick a supercompact cardinal $\kappa > \xi$.
Since $H(\kappa) = V_{\kappa}$, Theorem~\ref{theorem1} tells us
that ${\rm SVP}_{\kappa}(\mathbf{\Sigma_2})$ holds.
\end{proof}

The following is a corresponding version without parameters,
with the same (in fact, simpler) proof.

\begin{corollary}
\label{corollary-1}
The following statements are equivalent:
\begin{enumerate}
\item ${\rm SVP}_{\kappa}(\Sigma_2)$ holds for some cardinal~$\kappa$.
\item ${\rm VP}(\Sigma_2)$ holds.
\item ${\rm VP}^{\Sigma}(\Delta_2)$ holds if $\Sigma$ is
the signature of the language of set theory.
\item There exists a supercompact cardinal.
\end{enumerate}
\end{corollary}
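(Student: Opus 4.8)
The plan is to follow the pattern of the proof of Corollary~\ref{corollary0}, but in the parameter-free setting the arguments simplify because we only need a single cardinal rather than a proper class of them. I would establish the cycle of implications $(1)\Rightarrow(2)\Rightarrow(3)\Rightarrow(4)\Rightarrow(1)$.

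For $(1)\Rightarrow(2)$: assume ${\rm SVP}_{\kappa}(\Sigma_2)$ for some fixed cardinal~$\kappa$. Let $\Sigma\in H(\kappa)$ be any admissible signature and let $\Ce$ be a proper class of $\Sigma$\nobreakdash-structures defined by a $\Sigma_2$ formula \emph{without parameters}. Since $\Ce$ is proper, pick $Y\in\Ce$ of rank bigger than~$\kappa$; then any $X\in\Ce\cap H(\kappa)$ is automatically distinct from~$Y$, and ${\rm SVP}_{\kappa}^{\Sigma}(\Sigma_2)$ gives an elementary embedding $X\to Y$, so ${\rm VP}^{\Sigma}(\Sigma_2)$ holds. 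Because $\Sigma$ was arbitrary in $H(\kappa)$, this is ${\rm VP}(\Sigma_2)$. The implication $(2)\Rightarrow(3)$ is trivial, since $\Delta_2$ classes are in particular $\Sigma_2$. For $(3)\Rightarrow(4)$ I would invoke Theorem~\ref{proposition000}: its proof, although phrased for {\boldmath$\Delta_2$} classes with a parameter $R$ coding a rigid graph, actually produces a \emph{single} supercompact cardinal from the failure of VP for one such $\Delta_2$ class; dropping to the parameter-free case one fixes $\xi=0$ (or simply deletes $\xi$ from the bookkeeping), so the rigid relation lives on a finite or small set and the class $\Ce$ becomes $\Delta_2$ \emph{without parameters} in the signature of set theory; the same critical-point analysis yields a supercompact cardinal, contradicting the supposed absence of one. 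This is the one step where I would be most careful, since I must check that the construction in Theorem~\ref{proposition000} genuinely goes through with no parameters — but since $R$ was only used to make the structures rigid and can be taken definable, this should cause no real trouble.

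For $(4)\Rightarrow(1)$: let $\kappa$ be a supercompact cardinal. Since $\kappa$ is strongly inaccessible, $H(\kappa)=V_{\kappa}$. Given any $\Sigma\in H(\kappa)$ and any proper class $\Ce$ of $\Sigma$\nobreakdash-structures defined by a $\Sigma_2$ formula without parameters, apply Theorem~\ref{theorem1} with $p=\emptyset$: for any $Y\in\Ce$ (taking $A=\emptyset$ in part~(a), or directly part~(b)) there is a subobject $X\in\Ce\cap V_{\kappa}=\Ce\cap H(\kappa)$ together with an elementary embedding $X\to Y$. Hence ${\rm SVP}_{\kappa}(\Sigma_2)$ holds for this particular~$\kappa$, which is statement~(1).

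The only substantive point — and hence the main obstacle — is confirming that the direction $(3)\Rightarrow(4)$ of the proof of Theorem~\ref{proposition000} survives the removal of parameters: I must ensure the class $\Ce$ built there can be taken $\Delta_2$ in the pure language of set theory (using a definable rigid relation on a small ordinal, e.g. on~$\omega$, or on the empty set when no rigidity is needed), so that hypothesis~(3) applies to it verbatim. Everything else is a routine specialization of Corollary~\ref{corollary0}, indeed simpler because a proper class of cardinals collapses to one.
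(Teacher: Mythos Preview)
Your plan is exactly what the paper does: it offers no separate argument for Corollary~\ref{corollary-1}, stating only that it is ``a corresponding version without parameters, with the same (in fact, simpler) proof'' as Corollary~\ref{corollary0}. Your cycle $(1)\Rightarrow(2)\Rightarrow(3)\Rightarrow(4)\Rightarrow(1)$ and your identification of $(3)\Rightarrow(4)$ as the step requiring care are both right.

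Two points are worth tightening. First, in $(3)\Rightarrow(4)$ you correctly observe that setting $\xi=0$ eliminates the \emph{parameter} $R$ (the rigid relation on $\{0\}$ is definable). But notice that statement~(3) here differs from its boldface counterpart in Corollary~\ref{corollary0} not only by forbidding parameters but by dropping the additional \emph{constant symbol} from~$\Sigma$. The structures $\langle V_{\lambda_\alpha+2},\in,\langle\alpha,R\rangle\rangle$ built in Theorem~\ref{proposition000} still live in the signature $\{\in,c\}$, and the constant's job is to force $j(\alpha)=\beta$, giving $\mathrm{crit}(j)\le\alpha<\lambda_\alpha$. Your remark that ``$R$ was only used to make the structures rigid'' misses this: what you must argue is that the constant itself can be discarded --- for instance by taking structures $\langle V_{\gamma},\in\rangle$ of strictly increasing height indexed along a $\Delta_2$ club, so that any elementary embedding between two distinct ones must move the top ordinal and hence have critical point below a member of~$C_0$. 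This is not hard, but it is the actual content behind ``simpler'', and your write-up should say it.

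Second, and more minor: in $(1)\Rightarrow(2)$ you prove ${\rm VP}^{\Sigma}(\Sigma_2)$ only for $\Sigma\in H(\kappa)$, then assert ``this is ${\rm VP}(\Sigma_2)$''. That overstates what you have shown, since ${\rm VP}(\Sigma_2)$ quantifies over all signatures. It does not damage the equivalence, because the cycle closes through~(3), where $\Sigma$ is the fixed finitary signature of set theory and certainly lies in~$H(\kappa)$; but the sentence as written is not quite accurate.
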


\section{Vop\v{e}nka's principle and extendible cardinals}
\label{VPandextendiblecardinals}

For cardinals $\kappa <\lambda$, we say that $\kappa$ is \emph{$\lambda$\nobreakdash-extendible} 
if there is an elementary embedding $j\colon V_{\lambda}\to V_{\mu}$ for some~$\mu$, 
with critical point $\kappa$ and with $j(\kappa)>\lambda$.
A cardinal $\kappa$ is called \emph{extendible} if it is $\lambda$\nobreakdash-extendible 
for all cardinals $\lambda >\kappa$. As shown in
\cite[20.24]{J2}, extendible cardinals are supercompact.
See \cite{J2} or \cite{K} for more information about extendible cardinals.

For each $n<\omega$, let $\Cn$ denote the club proper class of infinite cardinals 
$\kappa$ that are \emph{$\Sigma_n$\nobreakdash-correct} in~$V$, that is, $V_{\kappa}\preceq_n V$.
Since the satisfaction relation $\models_n$ for $\Sigma_n$ sentences
(which is, in fact, a proper class)
is $\Sigma_n$\nobreakdash-definable
for $n\geq 1$ \cite[\S 0.2]{K}, it follows that, for $n\geq 1$, the class $\Cn$ is~$\Pi_{n}$. 
To see this, note first that $\Czero$ is the class of all infinite cardinals, 
and therefore it is $\Pi_1$\nobreakdash-definable. For $\kappa$ an infinite cardinal, $\kappa \in \Cone$ 
if and only if $\kappa$ is an uncountable cardinal and $V_{\kappa}=H(\kappa)$, 
which implies that $\Cone$ is $\Pi_1$\nobreakdash-definable.
In general, for $n\geq 1$ and for any infinite cardinal~$\kappa$, we have
$V_{\kappa}\preceq_{n+1} V$  if and only if  
\[
\kappa\in \Cn\,\wedge \, (\forall \varphi (x)\in 
\Sigma_{n+1})\, (\forall a\in V_{\kappa})\, (\;\models_{n+1}\varphi (a) \to V_{\kappa} \models \varphi (a)),
\]
which is a $\Pi_{n+1}$ formula
showing that $\Cnpo$ is $\Pi_{n+1}$\nobreakdash-definable.

We shall use the following new strong form of extendibility.

\begin{definition}
\label{definition1}
{\rm 
For $C$ a club proper class of cardinals and $\kappa <\lambda$ in~$C$, we say that $\kappa$ is 
\emph{$\lambda$\nobreakdash-$C$\nobreakdash-extendible} if there is an elementary embedding 
\hbox{$j\colon V_{\lambda}\to V_{\mu}$} for some $\mu\in C$, with critical point~$\kappa$, 
such that $j(\kappa)>\lambda$ and $j(\kappa)\in C$.

We say that a cardinal $\kappa$ in $C$ is \emph{$C$\nobreakdash-extendible} if it is 
$\lambda$\nobreakdash-$C$\nobreakdash-extendible for all $\lambda$ in $C$ greater than~$\kappa$.
}
\end{definition}

Note that, for all~$n$, if $\kappa$ is $\Cn$\nobreakdash-extendible, then $\kappa$ is extendible.
Therefore, a cardinal is $\Czero$\nobreakdash-extendible if and only if it is extendible.

\begin{proposition}
\label{proposition3}
Every extendible cardinal is $\Cone$\nobreakdash-extendible.
\end{proposition}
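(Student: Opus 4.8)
The plan is to verify the two clauses in the definition of $\Cone$\nobreakdash-extendibility. The clause $\kappa\in\Cone$ is immediate: an extendible cardinal is supercompact \cite[20.24]{J2}, hence strongly inaccessible, so that $V_\kappa=H(\kappa)$; and since $\kappa$ is uncountable we have $H(\kappa)\preceq_1 V$ by the facts recalled above, whence $V_\kappa\preceq_1 V$, that is, $\kappa\in\Cone$.

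The crux of the second clause is a ``local'' description of membership in~$\Cone$. For an uncountable cardinal~$\nu$, the assertion $\nu\in\Cone$ is equivalent to the conjunction of ``$\nu$ is an uncountable cardinal'' and ``for every $\alpha<\nu$ there is no injection of $\nu$ into $V_\alpha$'', the latter being a reformulation of $V_\nu=H(\nu)$. This conjunction is absolute between $V_{\nu+1}$ and~$V$: any injection $\nu\to V_\alpha$ with $\alpha<\nu$, and any surjection of an ordinal below~$\nu$ onto~$\nu$, has rank exactly~$\nu$, hence lies in $V_{\nu+1}$, while each $V_\alpha$ with $\alpha<\nu$ is computed correctly there; so a counterexample to the conjunction exists in~$V$ if and only if it exists in~$V_{\nu+1}$, and likewise in $V_\theta$ for any $\theta>\nu$. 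This is what upgrades the predicate of $\Cone$\nobreakdash-membership, which is only $\Pi_1$ and hence a priori merely downward absolute, to something that an elementary embedding can transport in both directions.

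Now fix $\lambda\in\Cone$ with $\lambda>\kappa$. Extendibility of~$\kappa$ yields an elementary embedding $j\colon V_{\lambda+1}\to V_{\mu+1}$ with critical point~$\kappa$ and $j(\kappa)>\lambda$ (use $\eta$\nobreakdash-extendibility of $\kappa$ for a cardinal $\eta>\lambda+1$ and restrict the embedding to $V_{\lambda+1}\in V_\eta$); here $\mu=j(\lambda)$, since $\lambda$ is the largest ordinal of $V_{\lambda+1}$ and is therefore sent to the largest ordinal~$\mu$ of the target. Restricting $j$ to $V_\lambda$ --- definable in $V_{\lambda+1}$ as the collection of all $x$ whose rank is not the largest ordinal --- gives an elementary embedding $j\restriction V_\lambda\colon V_\lambda\to V_\mu$ with critical point~$\kappa$ and $j(\kappa)>\lambda$, as the definition requires. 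It remains to see that $j(\kappa)\in\Cone$ and $\mu\in\Cone$. For the former: since $\kappa\in\Cone$ and the relevant counterexamples have rank $\le\kappa<\lambda$, the structure $V_{\lambda+1}$ satisfies ``$\kappa$ is an uncountable cardinal with no injection into any $V_\alpha$ for $\alpha<\kappa$''; applying~$j$, the structure $V_{\mu+1}$ satisfies the same statement about $j(\kappa)$, and since $j(\kappa)<\mu$ this is the correct relativization, so $j(\kappa)\in\Cone$ in~$V$. For the latter: since $\lambda\in\Cone$, the corresponding statement with ``$\lambda$'' replaced by ``the largest ordinal'' holds in~$V_{\lambda+1}$, and applying~$j$ transports it to the largest ordinal~$\mu$ of $V_{\mu+1}$; this too is the correct relativization, so $\mu\in\Cone$ in~$V$.

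The step I expect to be most delicate is the local absoluteness in the second paragraph: one must pin down precisely the rank of potential counterexamples so that $\Cone$\nobreakdash-membership survives being pushed \emph{upward} through~$j$, which is not automatic for a $\Pi_1$ predicate. The remaining ingredients --- the restriction of $j$ from $V_{\lambda+1}$ to $V_\lambda$, and the passage from extendibility to an embedding on a ``$+1$'' level so that $\lambda$ becomes a definable parameter --- are standard.
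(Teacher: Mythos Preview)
Your proof is correct, but it follows a different route from the paper's. The paper does not go to $V_{\lambda+1}$; instead it chooses an inaccessible cardinal $\lambda'\ge\lambda$ (which exists because extendibles produce a proper class of inaccessibles), takes an embedding $j'\colon V_{\lambda'}\to V_{\mu'}$ with critical point~$\kappa$ and $j'(\kappa)>\lambda'$, and observes that $V_{\lambda'}=H(\lambda')$ forces $V_{\mu'}=H(\mu')$ by elementarity, so $\mu'\in\Cone$. Then $V_\lambda\preceq_1 V_{\lambda'}$ (both levels are in~$\Cone$), so by elementarity $V_{j'(\lambda)}\preceq_1 V_{\mu'}$, whence $\mu=j'(\lambda)\in\Cone$; the restriction $j'\restriction V_\lambda$ is the desired witness. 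The conditions $\kappa\in\Cone$ and $j(\kappa)\in\Cone$ are handled implicitly via inaccessibility.

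What you do instead is establish that $\Cone$-membership of an ordinal~$\nu$ is decided already in $V_{\nu+1}$, by bounding the ranks of the relevant witnesses (surjections onto~$\nu$, injections of~$\nu$ into some~$V_\alpha$). This lets you work directly with $j\colon V_{\lambda+1}\to V_{\mu+1}$ and push $\Cone$-membership of both $\kappa$ and~$\lambda$ forward through~$j$ without the auxiliary inaccessible. Your approach is more self-contained and makes explicit why a $\Pi_1$ predicate can nonetheless be transported upward here; the paper's approach is shorter once one is comfortable invoking $\preceq_1$-transitivity, and its use of an auxiliary ``nice'' cardinal above~$\lambda$ is a pattern that recurs elsewhere in the subject. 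Both are sound; neither dominates the other.
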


\begin{proof}
Suppose that $\kappa$ is extendible and $\lambda \in \Cone$ is greater than~$\kappa$. 
Note that the existence of an extendible cardinal implies the existence of a proper class of 
inaccessible cardinals, as the image of $\kappa$ under any elementary embedding 
\hbox{$j\colon V_{\lambda}\to V_{\mu}$}, with critical point $\kappa$ and $\lambda$ a cardinal, 
is always an inaccessible cardinal in~$V$. So we can pick an inaccessible cardinal $\lambda'\geq \lambda$. 
Let \hbox{$j'\colon V_{\lambda'}\to V_{\mu'}$} be an elementary embedding with critical point $\kappa$ and such 
that \hbox{$j'(\kappa )>\lambda'$}. Since $V_{\lambda'}=H(\lambda')$, it follows by elementarity of $j'$ 
that $V_{\mu'}=H(\mu')$. Hence, $\mu'\in \Cone$. 

Let us see that $j=j'\restriction V_{\lambda}:V_{\lambda}\to V_{j'(\lambda)}$ witnesses the 
$\lambda$\nobreakdash-$\Cone$\nobreakdash-extendibility of~$\kappa$. We only need to check that $\mu=j'(\lambda)\in \Cone$. 
But since $V_{\lambda}\preceq_1 V_{\lambda'}$, it follows by elementarity of $j'$ that 
$V_{\mu}\preceq_1 V_{\mu'}$. Hence, since $\mu'\in \Cone$, also $\mu \in \Cone$.
\end{proof}

Hence, a cardinal is $\Cone$\nobreakdash-extendible if and only if it is extendible.
Let us also observe that, if there exists a $\Cnpt$\nobreakdash-extendible cardinal for $n\ge 1$, 
then there exists a proper class of $\Cn$\nobreakdash-extendible cardinals; see~\cite{Ba}.

\begin{lemma}
\label{extendiblecorrectness}
If $\kappa$ is $C(n)$\nobreakdash-extendible, then $\kappa\in C(n+2)$.
\end{lemma}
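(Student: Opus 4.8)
The plan is to show $V_\kappa \preceq_{n+2} V$ by exploiting the defining elementary embeddings of $\Cn$-extendibility together with the correctness hierarchy $\Cn$. First I would recall the standard fact (a reflection-type argument, or induction on the L\'evy hierarchy) that $V_\kappa \preceq_{n+2} V$ is equivalent to the conjunction of $\kappa \in \Cnpo$ and the statement that, for every $\Pi_{n+1}$ formula $\varphi(x)$ and every $a \in V_\kappa$, if $V_\kappa \models \varphi(a)$ then $V \models \varphi(a)$ — i.e.\ $\Pi_{n+1}$ formulas with parameters in $V_\kappa$ that hold in $V_\kappa$ go up to $V$ (equivalently, $\Sigma_{n+2}$ statements reflect down). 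Since a $\Cn$-extendible cardinal is in particular extendible, hence supercompact, and $\kappa \in \Cn$ by definition, the case $n=0$ already gives $\kappa \in \Cone$; for $n \ge 1$ one uses that being in $\Cnpo$ is a $\Pi_{n+1}$ condition that can be verified using the embeddings, as in \cite[\S 22]{K}. So the real content is the reflection of $\Pi_{n+1}$ (equivalently $\Sigma_{n+2}$) statements.

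Next I would argue as follows. Suppose $\varphi(a)$ is a $\Sigma_{n+2}$ statement with $a \in V_\kappa$ that holds in $V$; write $\varphi(a)$ as $\exists y\, \psi(y,a)$ with $\psi$ being $\Pi_{n+1}$, and let $b$ be a witness, say with $b \in V_\lambda$ for some $\lambda > \kappa$ in $\Cn$ (such $\lambda$ exists since $\Cn$ is a proper class, unbounded). Apply $\lambda$-$\Cn$-extendibility of $\kappa$: there is an elementary embedding $j\colon V_\lambda \to V_\mu$ with $\mu \in \Cn$, critical point $\kappa$, $j(\kappa) > \lambda$, and $j(\kappa) \in \Cn$. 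Since $\lambda \in \Cn$ we have $V_\lambda \preceq_n V$, so $\psi(b,a)$ — being $\Pi_{n+1}$ — needs care: I would instead run the argument by first reflecting $\varphi(a)$ down from $V$ to $V_\lambda$ using $\lambda \in \Cn$ (this handles the outer block up to $\Sigma_n$; the extra quantifier alternation requires choosing $\lambda \in \Cnpo$ when $n \ge 1$, which is again possible since $\Cnpo$ is club), obtaining a witness $b \in V_\lambda$ with $V_\lambda \models \psi(b,a)$. Then $V_\mu \models \psi(j(b), a)$ by elementarity of $j$ (using $j(a) = a$ since $a \in V_\kappa$). Because $\mu \in \Cn \subseteq \Cnpo$ and $j(b) \in V_\mu$, one gets $V \models \psi(j(b),a)$, hence $V \models \varphi(a)$ — wait, that direction is backwards; the point is the reverse: starting from $V_\kappa \models \varphi(a)$, use $j^{-1}$-type reasoning. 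Concretely: if $V_\kappa \models \varphi(a)$, then since $\kappa < \lambda$ and $\kappa = \mathrm{crit}(j)$ we have $V_{j(\kappa)} \models \varphi(a)$ by elementarity applied inside $V_\mu$ (as $V_\kappa = (V_\kappa)^{V_\lambda}$ and $j$ fixes $V_\kappa$ pointwise, $j(V_\kappa) = V_{j(\kappa)}$); then since $j(\kappa) \in \Cnpo$ one concludes $V \models \varphi(a)$.

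The key steps, in order: (1) reduce $V_\kappa \preceq_{n+2} V$ to $\kappa \in \Cnpo$ plus reflection of $\Sigma_{n+2}$ statements with parameters in $V_\kappa$; (2) observe $\kappa \in \Cnpo$ follows since $\kappa$ is extendible (for $n=0$) or via the embeddings and \cite{K} (for $n \ge 1$), noting $\kappa \in \Cn$ is given; (3) take a $\Sigma_{n+2}$ statement $\varphi(a)$ with $a \in V_\kappa$ true in $V_\kappa$, pick $\lambda \in \Cnpo$ above $\kappa$, apply $\lambda$-$\Cn$-extendibility to get $j\colon V_\lambda \to V_\mu$ with $j(\kappa) \in \Cn$ and $\mu \in \Cn$; (4) note $j$ fixes $V_\kappa$ pointwise (critical point $\kappa$), so $j \restriction V_\kappa = \ident$ and $j(V_\kappa) = V_{j(\kappa)}$, whence $V_{j(\kappa)} \models \varphi(a)$ by elementarity; (5) since $j(\kappa) \in \Cnpo \subseteq$ the class where $\Sigma_{n+1}$-correctness holds — more precisely since $j(\kappa) \in \Cn$ and, by the same analysis applied one level down or by the $\Pi_{n+1}$-definability of $\Cnpo$ transferred through $j$, actually $j(\kappa) \in \Cnpo$ — conclude $\varphi(a)$ holds in $V$. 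The main obstacle I anticipate is bookkeeping the quantifier alternation so that one genuinely gains two levels rather than one: a single application of correctness of $\lambda$ and a single embedding each buy roughly one alternation, so the proof must either iterate or, more cleanly, observe that $\Cn$-extendibility bundles "$j(\kappa) \in \Cn$" which — combined with $V_{j(\kappa)} \models \varphi(a)$ and the fact that the target ordinal $j(\kappa)$ can be taken arbitrarily large in $\Cn$ — forces $\Sigma_{n+2}$-correctness of $\kappa$ itself. Getting the precise complexity accounting right (and citing \cite{K} for the $\Sigma_n$-definability of $\models_n$) is where the care is needed; everything else is a routine manipulation of elementary embeddings and the transitive structure of the $V_\alpha$'s.
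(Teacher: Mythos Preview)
Your proposal has the right ingredients---an embedding $j\colon V_\lambda\to V_\mu$ with critical point $\kappa$ and $j(\kappa)\in\Cn$, plus correctness of the levels involved---but the directions get tangled, and your final ``key steps'' (3)--(5) prove the wrong implication. Once $\kappa\in\Cnpo$ is known (your step~(2), handled in the paper simply as the induction hypothesis), upward reflection of $\Sigma_{n+2}$ from $V_\kappa$ to $V$ is automatic: a witness in $V_\kappa$ satisfies the inner $\Pi_{n+1}$ formula in $V_\kappa$, hence in $V$ by $\Cnpo$\nobreakdash-correctness. So the embedding argument you give in (3)--(5) establishes nothing new. What actually requires the embedding is the \emph{downward} direction: if $\exists y\,\psi(y,a)$ holds in $V$ with $\psi$ a $\Pi_{n+1}$ formula and $a\in V_\kappa$, then it holds in~$V_\kappa$.

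Your first attempt (before the ``wait'') was on the right track but aimed at the wrong target: you should reflect into $V_{j(\kappa)}$, not~$V_\mu$. Pick a witness $b$ and $\lambda\in\Cn$ with $b\in V_\lambda$; apply $\lambda$\nobreakdash-$\Cn$\nobreakdash-extendibility. Since $j(\kappa)>\lambda$ we have $b\in V_{j(\kappa)}$, and since $j(\kappa)\in\Cn$ the $\Pi_{n+1}$ formula $\psi(b,a)$ reflects down from $V$ to $V_{j(\kappa)}$ (only $\Cn$ is needed here, not $\Cnpo$: write $\psi$ as $\forall z\,\theta$ with $\theta\in\Sigma_n$ and use $\Sigma_n$\nobreakdash-correctness of $j(\kappa)$). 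Hence $V_{j(\kappa)}\models\exists y\,\psi(y,a)$. Now $V_{j(\kappa)}=j(V_\kappa)$ as computed in $V_\mu$, and $a=j(a)$; by elementarity of $j$ this pulls back to $V_\kappa\models\exists y\,\psi(y,a)$. That is exactly the paper's argument, and it makes your unjustified upgrade of $j(\kappa)$ from $\Cn$ to $\Cnpo$ unnecessary.
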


\begin{proof}
By induction on $n$. For $n=0$, since $\kappa\in\Cone$, we only need to show that if 
$\exists x\, \varphi (x)$ is a $\Sigma_2$ sentence, where $\varphi$ is $\Pi_1$ and has parameters in~$V_{\kappa}$, 
that holds in~$V$, then it holds in~$V_{\kappa}$. So suppose that $a$ is such that $\varphi(a)$ holds in~$V$. 
Let $\lambda\in C(n)$ be greater than $\kappa$ and with $a\in V_{\lambda}$, and let $j\colon V_{\lambda}\to V_{\mu}$ 
be elementary, with critical point $\kappa$ and with $j(\kappa)>\lambda$. Then $V_{j(\kappa)}\models \varphi (a)$, 
and so, by elementarity, $V_{\kappa}\models \exists x\,\varphi (x)$.

Now suppose that $\kappa$ is $C(n)$\nobreakdash-extendible and $\exists x\, \varphi (x)$ is a $\Sigma_{n+2}$ sentence, 
where $\varphi$ is $\Pi_{n+1}$ and has parameters in~$V_{\kappa}$. If $\exists x\,\varphi(x)$ holds in~$V_{\kappa}$, 
then, since by the induction hypothesis $\kappa \in C(n+1)$, we have that $\exists x\,\varphi(x)$ holds in~$V$.
Now suppose that $a$ is such that $\varphi(a)$ holds in~$V$. Let $\lambda \in C(n)$ be greater than $\kappa$ and 
such that $a\in V_{\lambda}$, and let $j\colon V_{\lambda}\to V_{\mu}$ be elementary with critical point $\kappa$ 
and with $j(\kappa)>\lambda$. Then, since $j(\kappa)\in C(n)$, we have $V_{j(\kappa)}\models \varphi (a)$, and so, 
by elementarity, $V_{\kappa}\models \exists x\,\varphi (x)$.
\end{proof}

\begin{theorem}
\label{theorem4}
For every $n\ge 1$, if  $\kappa$ is a $\Cn$\nobreakdash-extendible cardinal, then 
${\rm SVP}_{\kappa}(\mbox{\boldmath$\Sigma_{n+2}$})$ holds.
\end{theorem}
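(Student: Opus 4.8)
The plan is to mimic the structure of the proof of Theorem~\ref{theorem1}, replacing the supercompact embedding $j\colon V\to M$ by an embedding $j\colon V_\lambda\to V_\mu$ coming from $\Cn$-extendibility, and carefully tracking where the correctness classes $\Cn$ and $\Cnpt$ enter. Let $\Ce$ be a proper class of $\Sigma$-structures definable by a {\boldmath$\Sigma_{n+2}$} formula $\varphi(x,p)$ with $p\in H(\kappa)=V_\kappa$ (note $\kappa$ is inaccessible, being $\Cn$-extendible hence extendible). Fix $Y\in\Ce$. The goal is to produce $X\in\Ce\cap V_\kappa$ together with an elementary embedding $X\to Y$ of $\Sigma$-structures.

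First I would choose $\lambda\in\Cn$ with $\lambda>\kappa$ and $Y\in V_\lambda$, and also large enough that $\varphi(Y,p)$ is decided correctly at level $\lambda$; here the key preliminary observation is Lemma~\ref{extendiblecorrectness}, which gives $\kappa\in C(n+2)$, so that $\Sigma_{n+2}$-truth of $\varphi(Y,p)$ reflects appropriately between $V_\kappa$, $V_\lambda$, and $V$. More precisely, since $\lambda\in\Cn$ we have $V_\lambda\preceq_n V$, and I would use this together with the $\Sigma_{n+2}$ form of $\varphi$ to arrange that $V_\lambda\models\varphi(Y,p)$ (after possibly enlarging $\lambda$ inside $\Cn$ so that a $\Pi_{n+1}$ witness-level sits below $\lambda$). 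Next apply $\Cn$-extendibility of $\kappa$ at this $\lambda$: there is an elementary embedding $j\colon V_\lambda\to V_\mu$ with $\mu\in\Cn$, critical point $\kappa$, and $j(\kappa)>\lambda$ with $j(\kappa)\in\Cn$. Since $\Sigma\in V_\kappa$ we have $j(\Sigma)=\Sigma$, and by Theorem~\ref{lemma0} (applied inside the transitive sets, or rather its evident $V_\lambda$-relativized version, using that $\Sigma$ is $\kappa$-ary and the sets involved are closed under $<\!\kappa$-sequences since $\kappa$ is inaccessible) the restriction $j\restriction Y\colon Y\to j(Y)$ is an elementary embedding of $\Sigma$-structures.

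Now I would run the reflection argument inside $V_\mu$. Since $V_\lambda\models\varphi(Y,p)$ and $j$ is elementary with $j(p)=p$, we get $V_\mu\models\varphi(j(Y),p)$, witnessed by the $\Sigma$-structure $j(Y)$ together with the identity map on it, but more usefully: in $V_\mu$ the statement ``there exists a $\Sigma$-structure $X$ of rank $<j(\kappa)$ with $\varphi(X,p)$ and an elementary embedding $X\to j(Y)$'' is true (take $X=j(Y)$ itself if $\mathrm{rank}(j(Y))<j(\kappa)$; more robustly, take $X$ to be the image $j[Y]$ viewed correctly, or simply note $Y\in V_\lambda\subseteq V_{j(\kappa)}$ and $j\restriction Y\colon Y\to j(Y)$ witnesses this with $X=Y$). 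To be safe I would phrase it so that $X=Y$ and the embedding is $j\restriction Y$: this is a statement about objects in $V_{j(\kappa)}$, and since $j(\kappa)\in\Cn$ and the displayed statement is (essentially) $\Sigma_{n+2}$ — existential over a $\Sigma$-structure and an embedding, with the $\varphi$ clause $\Sigma_{n+2}$ and ``$V_{j(\kappa)}\models$'' adding bounded complexity relative to the parameter $j(\kappa)$ — it reflects down to $V_\kappa$ by $\Cn$-correctness of $j(\kappa)$... wait, this needs care: I want to pull the statement from $V_\mu$ back along $j$ to $V_\lambda$, not reflect it. So: the statement $\exists X\,\exists e\,(\mathrm{rank}(X)<j(\kappa)\wedge\varphi(X,p)\wedge e\colon X\to j(Y)\text{ elem. emb. of }\Sigma\text{-structures})$ holds in $V_\mu$; by elementarity of $j\colon V_\lambda\to V_\mu$ and since $j(\kappa)$, $p=j(p)$, $Y=j^{-1}(\dots)$ pull back correctly, the statement $\exists X\,\exists e\,(\mathrm{rank}(X)<\kappa\wedge\varphi(X,p)\wedge e\colon X\to Y\text{ elem. emb.})$ holds in $V_\lambda$. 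Since $\varphi$ is $\Sigma_{n+2}$, $\lambda\in\Cn$, and $\kappa\in C(n+2)$ by Lemma~\ref{extendiblecorrectness}, the truth of $\varphi(X,p)$ for such an $X\in V_\kappa$ transfers up to $V$; ``$e$ is an elementary embedding of $\Sigma$-structures'' and the rank bound are absolute enough (Proposition~\ref{nonabsolute}) since everything sits in $V_\kappa$ which is closed under $<\!\kappa$-sequences. Hence there really is $X\in\Ce\cap V_\kappa$ with an elementary embedding $X\to Y$, which also gives part~(b) by taking... well, the theorem as stated only asserts $\mathrm{SVP}$, so we are done.

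The main obstacle, and the step I would be most careful about, is the bookkeeping of Lévy complexity through the three-way interaction between $V_\kappa$, $V_\lambda$, $V_\mu$ and $V$: one must check that $\lambda$ and $\mu$ being in $\Cn$, together with $\kappa$ and $j(\kappa)$ being in $C(n+2)$ (via Lemma~\ref{extendiblecorrectness}), is exactly enough to validate each reflection/transfer of the $\Sigma_{n+2}$ formula $\varphi$, and that the auxiliary clauses about being a $\Sigma$-structure and about $e$ being an elementary embedding cost nothing extra because all relevant sets lie in transitive levels closed under sequences of length $<\lambda$ (so Proposition~\ref{nonabsolute} and Theorem~\ref{lemma0} apply). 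A secondary technical point is arranging $\lambda\in\Cn$ large enough that $V_\lambda\models\varphi(Y,p)$: writing $\varphi=\exists z\,\psi$ with $\psi$ being $\Pi_{n+1}$, one picks a witness $z_0$ in $V$, then uses that $\Cn$ is club to find $\lambda\in\Cn$ with $\{Y,p,z_0\}\subseteq V_\lambda$ and $V_\lambda\preceq_n V$, and argues $V_\lambda\models\psi(z_0,Y,p)$ from $\Pi_{n+1}=\Pi_{n+1}$ downward behaviour relative to $\Sigma_n$-elementarity — this requires the standard fact (cf.\ \cite[\S0.2]{K}) that $\Sigma_{n+1}$ and $\Pi_{n+1}$ truth is adequately captured once one is $\Sigma_n$-correct and closed under the relevant witnesses, which is where one may need to bump $\lambda$ once more within $\Cn$. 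Once this complexity accounting is pinned down, the geometric content is identical to Theorem~\ref{theorem1}.
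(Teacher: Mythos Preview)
Your overall strategy---choose a suitable $\lambda$, apply $C(n)$-extendibility to get $j\colon V_\lambda\to V_\mu$, verify that $X=Y$ and $e=j\restriction Y$ witness the desired statement in $V_\mu$, then pull back along $j$---is exactly the paper's approach. However, there is a real gap stemming from your choice of $\lambda\in C(n)$ rather than $\lambda\in C(n+2)$.

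The gap appears in two places. First, to use $X=Y$ as witness in $V_\mu$ you need $V_\mu\models\varphi(Y,p)$, not merely $V_\mu\models\varphi(j(Y),p)$; elementarity of $j$ gives only the latter. You notice this (``wait, this needs care'') but do not resolve it. Second, and more seriously, after pulling back you obtain $X\in V_\kappa$ with $V_\lambda\models\varphi(X,p)$, and you must transfer this to $V$. A $\Sigma_{n+2}$ formula is not upward absolute from a $\Sigma_n$-correct level, so $\lambda\in C(n)$ is insufficient; invoking $\kappa\in C(n+2)$ does not help unless you first know $V_\kappa\models\varphi(X,p)$, which you have not established.

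The paper's fix is simply to choose $\lambda\in C(n+2)$ from the outset (this is harmless since $C(n+2)\subseteq C(n)$, so the extendibility embedding still exists). Then $V_\lambda\models\varphi(Y,p)$ holds directly; from $\kappa,\lambda\in C(n+2)$ one gets $V_\kappa\preceq_{n+2}V_\lambda$, whence by elementarity $V_{j(\kappa)}\preceq_{n+2}V_\mu$; combined with $V_\lambda\preceq_{n+1}V_{j(\kappa)}$ (using $\lambda\in C(n+1)$ and $j(\kappa)\in C(n)$) this yields $V_\lambda\preceq_{n+1}V_\mu$, so the $\Sigma_{n+2}$ formula $\varphi(Y,p)$ goes up to $V_\mu$. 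The final transfer $V_\lambda\models\varphi(X,p)\Rightarrow V\models\varphi(X,p)$ is then immediate from $\lambda\in C(n+2)$. Your alternative idea---phrasing the reflected statement with $V_{j(\kappa)}\models\varphi(X,p)$ and then using $\kappa\in C(n+2)$ after pullback---can be made to work with $\lambda\in C(n)$, but you would need to carry the original $\Pi_{n+1}$-witness $z_0$ explicitly to verify $V_{j(\kappa)}\models\varphi(Y,p)$, and you would need to write the relativized statement carefully; as written, this is not carried out.
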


\begin{proof}
Fix a $\Sigma_{n+2}$ formula $\exists x\, \varphi (x,y,z)$, where $\varphi$ is $\Pi_{n+1}$, such that
\[
\Ce =\{ Y : \exists x\, \varphi (x,Y,p)\}
\]
is a proper class of structures of the same type
for some set $p\in V_{\kappa}$.

Fix $Y\in \Ce$ and let $\lambda \in C(n+2)$ be greater than $\kappa$ and the ranks of $p$ and~$Y$. 
Thus, 
$
V_{\lambda}\models \exists x\, \varphi (x,B,p)
$.
Let $j\colon V_{\lambda}\to V_{\mu}$ for some $\mu\in \Cn$ be an elementary embedding with critical 
point~$\kappa$, with $j(\kappa)>\lambda$ and $j(\kappa)\in \Cn$. Note that both $Y$ and 
$j\restriction Y\colon Y\to j(Y)$ are in~$V_{\mu}$. 

Since $\kappa,\lambda\in C(n+2)$ by Lemma~\ref{extendiblecorrectness}, and $\kappa <\lambda$, we have $V_{\kappa}\preceq_{n+2}V_{\lambda}$. 
It follows that $V_{j(\kappa)}\preceq_{n+2} V_{\mu}$. Indeed, the following holds:
\[
V_{\lambda}\models (\forall x\in V_{\kappa})\, (\forall \theta \in \Sigma_{n+2}) \, (V_{\kappa}\models \theta (x) 
\leftrightarrow \;\models_{n+2} \theta (x)).
\]
Hence, by elementarity,
\[
V_{\mu}\models (\forall x\in V_{j(\kappa)})\, (\forall \theta \in \Sigma_{n+2}) \, 
(V_{j(\kappa)}\models \theta (x) \leftrightarrow \;\models_{n+2} \theta (x)),
\]
which implies that $V_{j(\kappa)}\preceq_{n+2} V_{\mu}$.

Since $j(\kappa)\in \Cn$, we  have $V_{\lambda}\preceq_{n+1} V_{j(\kappa)}$, and  
therefore $V_{\lambda}\preceq_{n+1} V_{\mu}$. It~follows that $V_{\mu}\models \exists x\,\varphi (x,Y,b)$. 

Thus, in $V_{\mu}$ it is true that there exists $X\in V_{j(\kappa)}$ such that $X\in \Ce$, namely~$Y$, 
and there exists an elementary embedding $e\colon X\to j(Y)$, namely $j\restriction Y$. 
Therefore, by elementarity of~$j$, the same is true in~$V_{\lambda}$, that is, 
there exists $X\in V_{\kappa}$ such that $X\in \Ce$, and there exists an elementary embedding 
$e\colon X\to Y$. 
Since $\lambda\in C(n+2)$, we have $X\in \Ce$ and we are done.
\end{proof}

\begin{corollary}
If $\kappa$ is an extendible cardinal, then ${\rm SVP}_{\kappa}(\mathbf{\Sigma_{3}})$ holds.
\end{corollary}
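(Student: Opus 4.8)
The plan is to obtain this as an immediate consequence of Theorem~\ref{theorem4} together with Proposition~\ref{proposition3}. First I would apply Proposition~\ref{proposition3} to conclude that the extendible cardinal $\kappa$ is $\Cone$\nobreakdash-extendible; in fact, as remarked right after that proposition, being extendible and being $\Cone$\nobreakdash-extendible are equivalent notions. Then I would invoke Theorem~\ref{theorem4} in the case $n=1$: since $\kappa$ is $\Cone$\nobreakdash-extendible, that theorem yields ${\rm SVP}_{\kappa}(\mathbf{\Sigma_{1+2}})$, and $\Sigma_{1+2}$ is just $\Sigma_3$, so ${\rm SVP}_{\kappa}(\mathbf{\Sigma_3})$ holds.

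Since both ingredients have already been established in the excerpt, there is no genuine obstacle here; the statement is really just the instance $n=1$ of Theorem~\ref{theorem4}, rephrased by means of the identification of extendibility with $\Cone$\nobreakdash-extendibility. The only points worth double-checking are purely bookkeeping: that the index $n=1$ indeed satisfies the hypothesis $n\ge 1$ of Theorem~\ref{theorem4}, and that the parameter conditions agree in both statements (parameters in $H(\kappa)=V_{\kappa}$, using that an extendible and hence strongly inaccessible cardinal satisfies $H(\kappa)=V_{\kappa}$). One could alternatively observe that ${\rm SVP}_{\kappa}(\mathbf{\Sigma_3})$ also follows from the fact that an extendible cardinal lies in $C(3)$ (Lemma~\ref{extendiblecorrectness} applied with $n=1$, via Proposition~\ref{proposition3}), but routing the argument through Theorem~\ref{theorem4} is the most economical.
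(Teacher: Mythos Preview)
Your proof is correct and is exactly the paper's approach: the paper's one-line proof reads ``This is the assertion of Theorem~\ref{theorem4} for $n=1$,'' with the identification of extendibility and $\Cone$\nobreakdash-extendibility (Proposition~\ref{proposition3}) left implicit. Your parenthetical alternative is not quite right, though---membership in $C(3)$ alone does not yield ${\rm SVP}_{\kappa}(\mathbf{\Sigma_3})$---but since you dismiss it in favour of Theorem~\ref{theorem4} this does not affect the argument.
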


\begin{proof}
This is the assertion of Theorem~\ref{theorem4} for $n=1$.
\end{proof}

\begin{corollary}
\label{theorem5}
Let $\Ce$ be a full subcategory of $\Sigma$\nobreakdash-structures definable by a $\Sigma_{n+2}$
formula with a set $p$ of parameters, where $n\ge 1$.
Suppose that there exists a $C(n)$\nobreakdash-extendible cardinal~$\kappa$ bigger than 
the rank of $p$ and~$\Sigma$.
Then the following hold:
\begin{itemize}
\item[{\rm (a)}]
For every homomorphism $g\colon A\to Y$ of $\Sigma$\nobreakdash-structures with $A\in V_{\kappa}$ and $Y\in\Ce$ there is a homomorphism $f\colon A\to X$ with $X\in\Ce\cap V_{\kappa}$ and an elementary embedding $e\colon X\to Y$ with $e\circ f=g$.
\item[{\rm (b)}]
Every object $Y\in\Ce$ has a subobject $X\in\Ce\cap V_{\kappa}$.
\end{itemize}
\end{corollary}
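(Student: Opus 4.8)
The plan is to run the proof of Theorem~\ref{theorem1} essentially verbatim, with the supercompact embedding $j\colon V\to M$ replaced by an embedding $j\colon V_\lambda\to V_\mu$ provided by $\Cn$\nobreakdash-extendibility, and with the $\Sigma_2$ absoluteness estimates of Theorem~\ref{theorem1} upgraded to the $\Sigma_{n+2}$ correctness chain used in the proof of Theorem~\ref{theorem4}. As in the proofs of Theorems~\ref{vpsigma1} and~\ref{theorem1}, it is enough to prove~(a): part~(b) is the case $A=\emptyset$, and $e\colon X\to Y$, being an injective homomorphism into the full subcategory~$\Ce$ of $\Sigma$\nobreakdash-structures, is a monomorphism, so $X$ is a subobject of~$Y$ by \cite[Proposition~7.37]{AHS}. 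Since $\kappa$ is $\Cn$\nobreakdash-extendible it is extendible, hence supercompact, hence strongly inaccessible; thus $V_\kappa=H(\kappa)$ and, $\kappa$ being regular above the rank of~$\Sigma$, the signature $\Sigma$ is $\kappa$\nobreakdash-ary. By Lemma~\ref{extendiblecorrectness}, $\kappa\in C(n+2)$. Write $\Ce=\{\,Y:\exists x\,\varphi(x,Y,p)\,\}$ with $\varphi$ a $\Pi_{n+1}$ formula and $p\in V_\kappa$.

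Given $g\colon A\to Y$ with $A\in V_\kappa$ and $Y\in\Ce$, I would choose $\lambda\in C(n+2)$ greater than~$\kappa$ and than the ranks of $p$, $A$, $Y$ and~$g$, and, in order to keep the absoluteness of Proposition~\ref{nonabsolute} available, of cofinality at least~$\kappa$, so that $V_\lambda$ is closed under sequences of length less than~$\kappa$; such $\lambda$ exist because $C(n+2)$ is a club proper class. Since $V_\lambda\preceq_{n+2}V$ and $Y\in\Ce$, we have $V_\lambda\models\exists x\,\varphi(x,Y,p)$, and by Proposition~\ref{nonabsolute} $V_\lambda$ recognizes $g$ as a homomorphism of $\Sigma$\nobreakdash-structures. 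Now apply $\Cn$\nobreakdash-extendibility of~$\kappa$ to~$\lambda$: there is an elementary embedding $j\colon V_\lambda\to V_\mu$ with $\mu\in\Cn$, critical point~$\kappa$, $j(\kappa)>\lambda$ and $j(\kappa)\in\Cn$. Then $j(A)=A$ and $j\restriction A=\ident_A$; the maps $g$ and $j\restriction Y\colon Y\to j(Y)$ lie in~$V_\mu$; $g$ remains a homomorphism of $\Sigma$\nobreakdash-structures there; and, re-running the induction in the proof of Theorem~\ref{lemma0} for the embedding $j\colon V_\lambda\to V_\mu$ (which is legitimate because $\Sigma\in V_\kappa$, so $j$ fixes $\Sigma$ and every arity $\alpha<\kappa$, and because every structure in play has rank below~$\mu$), $j\restriction Y$ is an elementary embedding of $\Sigma$\nobreakdash-structures, with $(j\restriction Y)\circ g=j(g)$.

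The correctness core is that of Theorem~\ref{theorem4}: from $V_\kappa\preceq_{n+2}V_\lambda$ (both $\kappa,\lambda\in C(n+2)$) and elementarity of~$j$ one gets $V_{j(\kappa)}\preceq_{n+2}V_\mu$; since $j(\kappa)\in\Cn$ one then gets $V_\lambda\preceq_{n+1}V_{j(\kappa)}\preceq_{n+1}V_\mu$, whence $V_\mu\models\exists x\,\varphi(x,Y,p)$. As $\mathrm{rank}(Y)<\lambda<j(\kappa)$ and $g\colon A\to Y$ witnesses this, the following holds in~$V_\mu$: there is a homomorphism of $\Sigma$\nobreakdash-structures $f\colon A\to X$ with $\mathrm{rank}(X)<j(\kappa)$ such that $\varphi(X,p)$ holds, together with an elementary embedding of $\Sigma$\nobreakdash-structures $e\colon X\to j(Y)$ satisfying $e\circ f=j(g)$ (witnessed by $X=Y$, $f=g$, $e=j\restriction Y$). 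By elementarity of~$j$ the analogous statement with $\kappa$, $Y$, $g$ in place of $j(\kappa)$, $j(Y)$, $j(g)$ holds in~$V_\lambda$; and since $\lambda\in C(n+2)$ and the clauses appearing in it are absolute between $V_\lambda$ and~$V$ by Proposition~\ref{nonabsolute}, it holds in~$V$. This yields a homomorphism $f\colon A\to X$ with $X\in\Ce\cap V_\kappa$ and an elementary embedding $e\colon X\to Y$ with $e\circ f=g$, which is~(a).

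The step that I expect to demand the most care is the absoluteness bookkeeping for the predicates \emph{``is a homomorphism of $\Sigma$\nobreakdash-structures''} and \emph{``is an elementary embedding of $\Sigma$\nobreakdash-structures''} as one passes among $V$, $V_\lambda$ and~$V_\mu$ in the infinitary case, together with the fact that Theorem~\ref{lemma0}, stated there for an embedding of the whole universe, must be re-proved for $j\colon V_\lambda\to V_\mu$. The way around this is to use throughout that $\Sigma\in V_\kappa$ lies below the critical point of~$j$ (so $j$ fixes $\Sigma$ and all arities $<\kappa$) and that every $\Sigma$\nobreakdash-structure occurring has rank strictly below the height of the ambient~$V_\alpha$, so that the relevant instances of satisfaction in $\mathcal{L}_\kappa(\Sigma)$ are absolute and no closure hypothesis beyond what $V_\lambda$ (chosen of cofinality at least~$\kappa$) and~$V_\mu$ already provide is needed. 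For finitary~$\Sigma$ these subtleties disappear, and what remains is a routine transcription of the proofs of Theorems~\ref{theorem1} and~\ref{theorem4}.
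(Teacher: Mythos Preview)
Your proposal is correct and follows essentially the same approach as the paper: the paper's own proof is a two-line reference saying that part~(b) is Theorem~\ref{theorem4} and part~(a) is ``a more general variant proved as in Theorem~\ref{theorem1}'', and you have spelled out precisely how to splice the $\Sigma_{n+2}$ correctness chain of Theorem~\ref{theorem4} into the witnessing-and-reflection argument of Theorem~\ref{theorem1}. Your extra care in choosing $\lambda\in C(n+2)$ of cofinality at least~$\kappa$ (to secure the absoluteness of Proposition~\ref{nonabsolute} for infinitary~$\Sigma$) and in noting that Theorem~\ref{lemma0} must be re-run for $j\colon V_\lambda\to V_\mu$ are details the paper leaves implicit; they are handled correctly.
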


\begin{proof}
Part~(b) is a consequence of Theorem~\ref{theorem4} and part~(a) is a more general variant proved as in Theorem~\ref{theorem1}.
\end{proof}

The following theorem yields a converse to Theorem~\ref{theorem4}. 

\begin{theorem}
\label{theorem10}
Let $n\geq 1$, and suppose that ${\rm VP}^{\Sigma}(\Sigma_{n+1}\wedge \Pi_{n+1})$ holds
when $\Sigma$ is the signature of the language of set theory with finitely many
additional $1$\nobreakdash-ary relation symbols.
Then there exists a $\Cn$\nobreakdash-extendible cardinal.
\end{theorem}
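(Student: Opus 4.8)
The plan is to run the argument of Theorem~\ref{proposition000} in the present, more delicate setting, the new twist being that $C(n)$-extendibility is a $\Pi_{n+1}$ property (by the discussion preceding Definition~\ref{definition1}, $C(n)$ is $\Pi_n$, and unravelling the definition of a $\lambda$-$C(n)$-extendible cardinal adds one existential layer over a $\Pi_n$ matrix, landing in roughly $\Sigma_{n+1}\wedge\Pi_{n+1}$). So the failure of the conclusion should be packaged as a $\Sigma_{n+1}\wedge\Pi_{n+1}$-definable proper class of structures to which the hypothesis ${\rm VP}^{\Sigma}(\Sigma_{n+1}\wedge\Pi_{n+1})$ applies. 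Concretely, I would argue by contradiction: fix an ordinal $\xi$ and assume there is no $C(n)$-extendible cardinal above~$\xi$. For each $\zeta>\xi$ in $C(n)$, since no $\kappa$ with $\xi<\kappa\le\zeta$ is $C(n)$-extendible, for each such $\kappa$ there is a least $\lambda_\kappa\in C(n)$ with $\lambda_\kappa>\zeta$ witnessing that $\kappa$ fails to be $\lambda_\kappa$-$C(n)$-extendible; let $F(\zeta)$ be the supremum of these finitely-below-$\zeta$-many ordinals, or more simply the least element of $C(n)$ above all of them. The key point is to check that the graph of $F$, and hence the club class $C_0$ of closure points of $F$ lying in $C(n)$ above~$\xi$, is definable with complexity no worse than $\Sigma_{n+1}\wedge\Pi_{n+1}$ with $\xi$ as a parameter; this uses that $C(n)$ is $\Pi_n$ and that ``$\kappa$ is $\lambda$-$C(n)$-extendible'' has the complexity estimated above.

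Next, as in Theorem~\ref{proposition000}, I would fix a rigid graph (rigid binary relation) $R$ on $\xi+1$ (by \cite{Ne}), enumerate $C_0$ as $\langle\lambda_\alpha:\alpha>\xi\rangle$, and form the proper class
\[
\Ce=\{\langle V_{\lambda_\alpha+2},\in,R,\{\alpha\}\rangle:\alpha>\xi\},
\]
a class of structures for the signature $\Sigma$ consisting of $\in$ together with the relation symbols coding $R$ and a unary relation marking the ordinal~$\alpha$ (which is where the ``finitely many additional $1$-ary relation symbols'' hypothesis is used — in fact $R$ can be absorbed into one such symbol since a rigid graph on $\xi+1\subseteq V_{\lambda_\alpha+2}$ is recoverable from the ambient $\in$, so the essential extra datum is the unary mark). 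The rigidity of $R$ and the unary mark guarantee that any elementary embedding between two members of $\Ce$ must send $\alpha$ to the $\beta$ of the target, hence is nontrivial. One then verifies that $\Ce$ is $\Sigma_{n+1}\wedge\Pi_{n+1}$-definable with parameters $\xi$ and $R$: membership says ``$x$ has the form $\langle V_{\gamma+2},\in,R,\{\alpha\}\rangle$ for some $\alpha>\xi$ and some $\gamma$ which is the $\alpha$-th element of $C_0$'', and the bottleneck clause ``$\gamma$ is the $\alpha$-th element of $C_0$'' inherits the complexity of $C_0$.

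Finally I would apply ${\rm VP}^{\Sigma}(\Sigma_{n+1}\wedge\Pi_{n+1})$ to get distinct members of $\Ce$ indexed by $\alpha<\beta$ and an elementary embedding
\[
j:\langle V_{\lambda_\alpha+2},\in,R,\{\alpha\}\rangle\longrightarrow\langle V_{\lambda_\beta+2},\in,R,\{\beta\}\rangle.
\]
Since $j(\alpha)=\beta\ne\alpha$, $j$ is nontrivial, so by Kunen's theorem (\cite[Theorem~17.7]{J2}, \cite{Kun1}) $\lambda_\alpha<\lambda_\beta$, and $j$ restricts to an elementary embedding $V_{\lambda_\alpha}\to V_{\lambda_\beta}$ with some critical point $\kappa\le\alpha$ and $j(\kappa)>\lambda_\alpha$. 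The argument of \cite[Lemma~2]{M} (together with Proposition~\ref{proposition3} and Lemma~\ref{extendiblecorrectness}, to track that $\kappa$, $\lambda_\alpha$, $j(\lambda_\alpha)$ all lie in $C(n)$ and that the relevant correctness holds) then shows $\kappa$ is $\lambda_\alpha$-$C(n)$-extendible, contradicting $F(\kappa)<\lambda_\alpha$ (which holds because $\lambda_\alpha\in C_0$ and $\xi<\kappa\le\alpha<\lambda_\alpha$). Since $\xi$ was arbitrary, there is a proper class of $C(n)$-extendible cardinals — in particular at least one, which is the statement. The main obstacle I anticipate is the bookkeeping in the complexity computation: one must be careful that unwinding ``$\lambda$-$C(n)$-extendible'' really stays within $\Sigma_{n+1}\wedge\Pi_{n+1}$ (the $\Sigma_{n+1}$ conjunct asserting existence of the embedding $j$ into some $V_\mu$ with $\mu,j(\kappa)\in C(n)$ — each a $\Pi_n$ side condition — and the $\Pi_{n+1}$ conjunct handling the ``for all $\lambda\in C(n)$'' quantifier in $C$-extendibility and the closure/membership conditions on $C_0$), and that defining $C_0$ and the enumeration $\alpha\mapsto\lambda_\alpha$ does not secretly push the complexity higher; the use of a rigid graph plus a unary marker, rather than just a constant, is precisely what keeps the structures in the allowed signature while forcing nontriviality of any elementary embedding.
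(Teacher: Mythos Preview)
Your overall strategy matches the paper's, but there are two genuine problems.

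First, a minor one: by fixing an ordinal $\xi$ and a rigid graph $R$ on $\xi+1$, you have made $\Ce$ definable only \emph{with parameters}, i.e.\ {\boldmath$\Sigma_{n+1}\wedge\Pi_{n+1}$}, whereas the hypothesis is the lightface ${\rm VP}^{\Sigma}(\Sigma_{n+1}\wedge\Pi_{n+1})$. Since the conclusion only asks for \emph{one} $C(n)$-extendible cardinal, simply drop $\xi$ (assume no $C(n)$-extendible cardinal exists at all); then no rigid graph is needed either, since your unary mark $\{\alpha\}$ already forces $j(\alpha)=\beta\ne\alpha$.

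Second, and this is the real gap: your structures $\langle V_{\lambda_\alpha+2},\in,R,\{\alpha\}\rangle$ do not carry enough information to conclude that the critical point $\kappa$ lies in $C(n)$, which is part of the very definition of $\lambda$-$C(n)$-extendibility. You appeal to Proposition~\ref{proposition3} and Lemma~\ref{extendiblecorrectness}, but those are of no use here: the former says extendible cardinals are $C(1)$-extendible (and you do not yet know $\kappa$ is extendible), and the latter says $C(n)$-extendible implies $C(n+2)$-correct, which is circular. Magidor's lemma gives supercompactness, not $C(n)$-correctness. The paper resolves this by adding the predicate $C\cap(\alpha+1)$ to the structure (where $C$ is the club of closure points of $F$); it then argues that $\kappa$ must lie in $D=C\cap C(n)$, for otherwise the least element $\delta$ of $D$ above $\sup(D\cap\kappa)$ is definable from a fixed point of $j$ and the added predicate, hence $j(\delta)=\delta$, giving an elementary $j\restriction V_{\delta+2}\colon V_{\delta+2}\to V_{\delta+2}$ in violation of Kunen's theorem. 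Once $\kappa\in D$, both $\kappa\in C(n)$ and $\kappa\in C$ follow, and elementarity (using $\lambda_\alpha,\lambda_\beta\in C(n)$) transfers these to $j(\kappa)$ and $j(F(\kappa))$; the paper then shows $j\restriction V_{F(\kappa)}$ witnesses $F(\kappa)$-$C(n)$-extendibility of $\kappa$, contradicting the definition of $F$. Without an analogue of the predicate $C\cap(\alpha+1)$, your argument cannot establish $\kappa\in C(n)$ and therefore does not close.
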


\begin{proof}
Suppose, to the contrary, that there is no $\Cn$\nobreakdash-extendible cardinal. 
Then the class function $F$ on ordinals given by defining
$F(\zeta)$ to be the least $\lambda >\zeta$ such that $\lambda\in \Cn$ and $\zeta$ is not 
$\lambda$\nobreakdash-$\Cn$\nobreakdash-extendible is well defined.

For $\lambda\in \Cn$, the relation ``$\zeta$ is $\lambda$\nobreakdash-$\Cn$\nobreakdash-extendible'' 
is $\Sigma_{n+1}$, for it holds if and only if $\zeta\in \Cn$ and  
\[
\exists \mu \, \exists j\colon V_{\lambda}\to V_{\mu} \,(\mbox{$j$ is elementary}\, \wedge \, \mbox{cp}(j)=\zeta \,
 \wedge \, j(\zeta)>\lambda \, \wedge \, \mu, j(\zeta)\in \Cn ),
\]
where $\mbox{cp}(j)$ denotes the critical point of~$j$.
Hence $F$ is $\Sigma_{n+1}\wedge \Pi_{n+1}$.

Let $C=\{ \alpha : \mbox{$\alpha$ is a limit ordinal and } (\forall \zeta <\alpha) \, F(\zeta )<\alpha \}$. 
So, $C$ is a $\Sigma_{n+1}\wedge \Pi_{n+1}$ closed unbounded proper class.

For each ordinal $\alpha$, let $\lambda_{\alpha}$ be the first limit point of 
$D=C\cap \Cn$ above~$\alpha$. Note that the 
class function $f$ on ordinals such that $f(\alpha)=\lambda_\alpha$ is $(\Sigma_{n+1}\wedge \Pi_{n+1})$\nobreakdash-definable. 
Now let
\[
\Ce =\{\langle V_{\lambda_{\alpha}},\in , \alpha  ,\lambda_\alpha , C\cap \alpha +1 \rangle : \alpha \in D\}.
\]
We claim that $\mathcal{C}$ is $(\Sigma_{n+1} \wedge \Pi_{n+1})$\nobreakdash-definable. 
Indeed, $X\in \mathcal{C}$ if and only if $X=\langle X_0, X_1, X_2 ,X_3,X_4 \rangle$, where
\[
\begin{array}{lll}
(1)\; X_2 \in C; \qquad &
(2)\; X_3 =\lambda_{X_2}; \qquad &
(3)\; X_0=V_{X_3};
\\[0.2cm]
(4)\; X_1=\,\in \restriction X_0; \qquad &
(5)\; X_4=C\cap X_2 +1. & 
\end{array}
\]

We have already seen that (1) and (2) are $\Sigma_{n+1} \wedge \Pi_{n+1}$ expressible. 
And so are (3) and~(4). As for~(5), note that $X_4=C\cap \alpha +1$ holds in $V$ if and only if 
it holds in $V_{X_3}$. 

So $\Ce$ is a $\Sigma_{n+1}\wedge \Pi_{n+1}$ proper class of structures of the same type in the 
language of set theory with three additional relation symbols. By our assumption, there are $\alpha <\beta$ 
in $D$ and an elementary embedding
\[
j\colon\langle V_{\lambda_{\alpha}},\in,  \alpha  ,\lambda_\alpha , C\cap \alpha +1 \rangle 
\longrightarrow
\langle V_{\lambda_{\beta}},\in,  \beta  ,\lambda_\beta , C\cap \beta +1 \rangle.
\]
Since $j$ sends $\alpha$ to $\beta$, it is not the identity. Let $\kappa$ be the critical point of~$j$.

Since $\alpha \in C$, we have $\kappa < F(\kappa )<\alpha$. Thus,
\[
j\restriction V_{F(\kappa)}:V_{F(\kappa)}\longrightarrow V_{j(F(\kappa))}
\]
is elementary, with critical point $\kappa$. 

We claim that $\kappa \in D$. Otherwise, $\gamma =\mbox{sup}(D\cap \kappa )<\kappa$. 
Let $\delta$ be the least ordinal in $D$ greater than $\gamma$ with $\kappa <\delta <\lambda_{\alpha}$.
Since $\delta$ is definable from $\gamma$ in the structure $\langle V_{\lambda_{\alpha}},\in , \alpha  ,C\cap \alpha +1  \rangle$, and since $j(\gamma)=\gamma$, we must also have $j(\delta)=\delta$. 
But then $j\restriction V_{\delta +2}:V_{\delta +2}\to V_{\delta +2}$ is an elementary embedding, 
contradicting Kunen's Theorem \cite{Kun1}.

By elementarity, $j(\kappa )\in \Cn$. Moreover, since $F(\kappa )\in \Cn$ and 
$\lambda_{\beta}\in \Cn$, we have $j(F(\kappa))\in \Cn$.
Since $\kappa \in C$, by elementarity we also have $j(\kappa )\in C$. Hence, $j(\kappa)>F(\kappa)$.
This shows that $j\restriction V_{F(\kappa)}$ witnesses that $\kappa$ is $F(\kappa)$\nobreakdash-$\Cn$\nobreakdash-extendible,
and this contradicts the definition of~$F$.
\end{proof}

The proof of Theorem~\ref{theorem10}
easily generalizes to the boldface case (see the proof of Theorem~\ref{proposition000}), namely if 
${\rm VP}(\mbox{\boldmath$\Sigma_{n+1}\wedge\Pi_{n+1}$})$
holds, then there is a proper class of
$\Cn$\nobreakdash-exten\-dible cardinals. In fact it is sufficient to assume that
${\rm VP}^{\Sigma}(\mbox{\boldmath$\Sigma_{n+1}\wedge\Pi_{n+1}$})$
holds when $\Sigma$
is the signature of the language of set theory with a finite number of additional $1$\nobreakdash-ary relation symbols.

The following corollaries summarize our results in this section.

\begin{corollary}
\label{corollary2}
The following statements are equivalent for $n\geq 1$:
\begin{enumerate}
\item 
${\rm SVP}_{\kappa}(\mbox{\boldmath$\Sigma_{n+2}$})$
holds for some cardinal~$\kappa$.
\item 
${\rm VP}(\Sigma_{n+1}\wedge \Pi_{n+1})$ holds.
\item 
${\rm VP}^{\Sigma}(\Sigma_{n+1}\wedge \Pi_{n+1})$ 
holds when $\Sigma$
is the signature of the language of set theory with a finite number of additional $1$\nobreakdash-ary relation symbols.
\item There exists a $\Cn$\nobreakdash-extendible cardinal.
\end{enumerate}
\end{corollary}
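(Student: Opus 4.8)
I would prove the four equivalences by closing the cycle $(1)\Rightarrow(2)\Rightarrow(3)\Rightarrow(4)\Rightarrow(1)$, since three of the four links are essentially already available. The implication $(4)\Rightarrow(1)$ is exactly Theorem~\ref{theorem4}, which asserts that a $C(n)$\nobreakdash-extendible cardinal $\kappa$ witnesses ${\rm SVP}_{\kappa}(\mbox{\boldmath$\Sigma_{n+2}$})$. The implication $(3)\Rightarrow(4)$ is precisely Theorem~\ref{theorem10}. And $(2)\Rightarrow(3)$ is immediate from the conventions fixed after Corollary~\ref{corollary-1}: statement~(3) is just the instance of statement~(2) for the single signature obtained by adjoining finitely many $1$\nobreakdash-ary relation symbols to the language of set theory.

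The remaining link $(1)\Rightarrow(2)$ I would handle along the lines of the proofs of Corollaries~\ref{corollary0} and~\ref{corollary-1}. One first records the purely syntactic fact that a $\Sigma_{n+1}\wedge\Pi_{n+1}$ formula is equivalent to a $\Sigma_{n+2}$ one: pulling the two existential blocks to the front leaves a conjunction of two $\Pi_{n+1}$ formulas, which is again $\Pi_{n+1}$. Hence every proper class $\Ce$ of $\Sigma$\nobreakdash-structures defined by a $\Sigma_{n+1}\wedge\Pi_{n+1}$ formula is {\boldmath$\Sigma_{n+2}$}\nobreakdash-definable, so ${\rm SVP}_{\kappa}^{\Sigma}(\mbox{\boldmath$\Sigma_{n+2}$})$ applies to it: for any $Y\in\Ce$ it produces $X\in\Ce\cap H(\kappa)$ and an elementary embedding $e\colon X\to Y$, and choosing $Y$ of rank exceeding $\kappa$ (possible because $\Ce$ is a proper class) forces $X\ne Y$, which is exactly the conclusion of ${\rm VP}^{\Sigma}(\Sigma_{n+1}\wedge\Pi_{n+1})$. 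Letting $\Sigma$ range over all admissible signatures then gives statement~(2).

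The point requiring care — and the step I expect to be the main obstacle — is that statement~(1) only provides ${\rm SVP}_{\kappa}$ for a \emph{single} cardinal $\kappa$, while ${\rm VP}(\Sigma_{n+1}\wedge\Pi_{n+1})$ quantifies over all signatures and over all values of the free parameter of the defining formula, whose rank is unbounded. To bridge this I would first extract from~(1) the information it always yields: applying ${\rm SVP}_{\kappa}(\mbox{\boldmath$\Sigma_{n+2}$})$ to the parameter-free class of structures built in the proof of Theorem~\ref{theorem10}, whose signature is finite and hence lies in $H(\kappa)$, that proof then delivers a $C(n)$\nobreakdash-extendible cardinal; by Theorem~\ref{theorem4} one thus has ${\rm SVP}_{\kappa'}(\mbox{\boldmath$\Sigma_{n+2}$})$ available at a $\kappa'$ chosen large enough to contain whatever signature and parameters the formula at hand involves, and the argument of the preceding paragraph goes through with $\kappa'$ in place of $\kappa$. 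Reconciling the ``single cardinal'' phrasing of~(1) with the ``all signatures and parameters'' scope of~(2) in this way — exactly as in the proofs of Corollaries~\ref{corollary0} and~\ref{corollary-1} — is where the work concentrates; the other ingredients (the syntactic inclusion $\Sigma_{n+1}\wedge\Pi_{n+1}\subseteq\Sigma_{n+2}$, the high-rank trick for distinctness, and Theorems~\ref{theorem4} and~\ref{theorem10}) are routine or already established.
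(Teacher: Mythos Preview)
Your proposal is correct and follows exactly the cycle the paper intends: the corollary is presented without proof as a summary of the section's results, and your attributions $(4)\Rightarrow(1)$ to Theorem~\ref{theorem4}, $(3)\Rightarrow(4)$ to Theorem~\ref{theorem10}, and $(2)\Rightarrow(3)$ as the trivial specialization are exactly right. One small correction on your $(1)\Rightarrow(2)$ discussion: statement~(2) is \emph{lightface} (note the absence of boldface in the paper), so there are no parameters in the defining formula to accommodate, and your worry about ``all values of the free parameter'' does not arise---this is precisely why, in parallel with Corollary~\ref{corollary-1}, a single $\kappa$ suffices here rather than a proper class as in Corollary~\ref{corollary0}.
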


\begin{corollary}
\label{corollary2bis}
The following statements are equivalent:
\begin{enumerate}
\item For every~$n$, 
${\rm SVP}_{\kappa}(\mbox{\boldmath$\Sigma_{n}$})$
holds for a proper class of cardinals~$\kappa$.
\item For every~$n$, 
${\rm SVP}_{\kappa}(\mbox{\boldmath$\Sigma_{n}$})$
holds for some cardinal~$\kappa$.
\item ${\rm VP}(\mbox{\boldmath$\Sigma_n$})$ holds for all~$n$.
\item ${\rm VP}^{\Sigma}(\Sigma_n)$ holds for all $n$ when $\Sigma$
is the signature of the language of set theory with a finite number of additional $1$\nobreakdash-ary relation symbols.
\item There exists a $\Cn$\nobreakdash-extendible cardinal for every~$n$.
\item There exists a proper class of $\Cn$\nobreakdash-extendible cardinals for every~$n$.
\item Vop\v{e}nka's principle holds.
\end{enumerate}
\end{corollary}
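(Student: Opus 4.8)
The plan is to prove the seven statements equivalent by a cycle of implications, most of which are either immediate inclusions or direct applications of the results established above. The one recurring device is that every formula of the language of set theory is $\Sigma_m$ for all sufficiently large $m$ (since $\Sigma_m\subseteq\Sigma_{m+1}$ and $\Pi_m\subseteq\Sigma_{m+1}$), so that every {\boldmath$\Sigma_k$} class is also {\boldmath$\Sigma_{k'}$} for $k'\ge k$; hence ${\rm SVP}_{\kappa}(\mbox{\boldmath$\Sigma_{k'}$})$ implies ${\rm SVP}_{\kappa}(\mbox{\boldmath$\Sigma_{k}$})$ and ${\rm VP}^{\Sigma}(\Sigma_{k'})$ implies ${\rm VP}^{\Sigma}(\Sigma_{k})$ whenever $k\le k'$. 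Together with the identifications $C(0)\textrm{-extendible}=C(1)\textrm{-extendible}=\textrm{extendible}$ (Proposition~\ref{proposition3} and the observations immediately preceding and following it), this allows us to reduce everything to the uniform regime $n\ge 1$, in which Theorems~\ref{theorem4} and~\ref{theorem10} and Corollary~\ref{corollary2} apply directly, so that the low complexities $\Sigma_1,\Sigma_2,\Sigma_3$ need no separate treatment.

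First I would dispatch the trivial links: $(1)\Rightarrow(2)$; $(7)\Rightarrow(3)$, as ${\rm VP}(\mbox{\boldmath$\Sigma_n$})$ is Vop\v{e}nka's principle restricted to {\boldmath$\Sigma_n$} classes; and $(3)\Rightarrow(4)$, since a lightface $\Sigma_n$ class is a {\boldmath$\Sigma_n$} class and one restricts to the stated signature. Next, $(4)\Rightarrow(5)$: for $n\ge1$, every $\Sigma_{n+1}\wedge\Pi_{n+1}$ formula is $\Sigma_{n+2}$, so the instance ${\rm VP}^{\Sigma}(\Sigma_{n+2})$ of~$(4)$ yields ${\rm VP}^{\Sigma}(\Sigma_{n+1}\wedge\Pi_{n+1})$ for the signature of set theory with finitely many extra $1$-ary relation symbols, whence Theorem~\ref{theorem10} produces a $C(n)$-extendible cardinal, the case $n=0$ following from $n=1$ via the identifications above. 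Then $(2)\Rightarrow(5)$: for $n\ge1$, the instance ${\rm SVP}_{\kappa}(\mbox{\boldmath$\Sigma_{n+2}$})$ of~$(2)$ gives a $C(n)$-extendible cardinal by Corollary~\ref{corollary2}, and again $n=0$ comes from $n=1$. And $(5)\Rightarrow(6)$: for $n\ge1$ a $C(n+2)$-extendible cardinal, supplied by~$(5)$, yields a proper class of $C(n)$-extendible cardinals by the result of~\cite{Ba} recalled above, while $n=0,1$ reduce to $n=1$.

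It remains to establish $(6)\Rightarrow(1)$ and $(6)\Rightarrow(7)$. For $(6)\Rightarrow(1)$, fix $n$, set $m=\max(n,3)-2\ge1$, and use~$(6)$ to obtain a proper class of $C(m)$-extendible cardinals; each such $\kappa$ satisfies ${\rm SVP}_{\kappa}(\mbox{\boldmath$\Sigma_{m+2}$})$ by Theorem~\ref{theorem4}, hence ${\rm SVP}_{\kappa}(\mbox{\boldmath$\Sigma_{n}$})$ since $n\le m+2$. For $(6)\Rightarrow(7)$, fix a formula $\varphi(x,y)$ and a parameter $p$ such that $\Ce=\{y:\varphi(p,y)\}$ is a proper class of structures of the same type; viewing $\varphi$ as a $\Sigma_{n+2}$ formula with $n\ge1$, pick by~$(6)$ a $C(n)$-extendible cardinal $\kappa$ with $p\in V_{\kappa}$, choose $Y\in\Ce$ of rank bigger than~$\kappa$, and apply Theorem~\ref{theorem4} to get $X\in\Ce$ of rank smaller than~$\kappa$ (so $X\ne Y$) together with an elementary embedding $X\to Y$; as $\varphi$ was arbitrary, the Vop\v{e}nka schema holds. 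Chasing the arrows $(1)\Rightarrow(2)\Rightarrow(5)\Rightarrow(6)\Rightarrow(7)\Rightarrow(3)\Rightarrow(4)\Rightarrow(5)$ together with $(6)\Rightarrow(1)$ shows that all seven statements are equivalent.

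There is no deep obstacle: the weight is carried entirely by Theorems~\ref{theorem4} and~\ref{theorem10}, Corollary~\ref{corollary2}, Proposition~\ref{proposition3}, and the result of~\cite{Ba}. What demands care is the bookkeeping in the L\'evy hierarchy---tracking how $\Sigma_n$, $\Sigma_{n+1}\wedge\Pi_{n+1}$ and arbitrary formulas are nested, and which index shift each cited theorem performs---together with the systematic ``bumping up'' of complexity to remain in the range $n\ge1$, the passage through the coincidences $C(0)\textrm{-extendible}=C(1)\textrm{-extendible}=\textrm{extendible}$ needed to cover $n=0$, and the constant distinction between the lightface statements $(3),(4),(7)$ and the boldface statements $(1),(2)$ at every step.
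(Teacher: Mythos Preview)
Your argument is correct and matches the paper's intended approach: the corollary is stated in the paper without proof, as a summary of the section's results, and you have assembled precisely those results (Theorems~\ref{theorem4} and~\ref{theorem10}, Corollary~\ref{corollary2}, Proposition~\ref{proposition3}, and the cited fact from~\cite{Ba}) into an explicit cycle of implications with the necessary bookkeeping on L\'evy complexity and the $C(0)=C(1)$ identification handled properly.
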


\section{Accessible categories}
\label{accessiblecategories}

A category is \emph{small} if its objects form a set,
and \emph{essentially small} if the isomorphism classes of its objects form a set.

Let $\lambda$ be a regular cardinal.
A~nonempty category $\Ke$ is called \emph{$\lambda$\nobreakdash-filtered}
if, given any set of objects $\{k_i\}_{i\in I}$ in $\Ke$ where $|I|<\lambda$, there is an object 
$k\in\Ke$ and a morphism $k_i\to k$ for each $i\in I$, and, moreover, given any set of parallel arrows
between any two objects $\{f_j\colon k\to k'\}_{j\in J}$ where $|J|<\lambda$, there is a morphism
$g\colon k'\to k''$ such that $g\circ f_j$ is the same morphism for all $j\in J$.
If $\Ce$ is any category, a functor $D\colon \Ke\to\Ce$ where $\Ke$ is a $\lambda$\nobreakdash-filtered
small category is called a \emph{$\lambda$\nobreakdash-filtered diagram}, and, if $D$ has a colimit~$L$, then 
$L$ is called a \emph{$\lambda$\nobreakdash-filtered colimit}. 
For example, every set is a $\lambda$\nobreakdash-filtered colimit 
of its subsets of cardinality smaller than~$\lambda$ (partially ordered by inclusion).

An object $A$ of a category $\Ce$ is \emph{$\lambda$\nobreakdash-presentable}
if the functor $\Ce(A,-)$ preserves $\lambda$\nobreakdash-filtered colimits; that is,
for each $\lambda$\nobreakdash-filtered diagram $D\colon \Ke\to\Ce$ with a colimit~$L$,
each morphism $A\to L$ factors through a morphism $A\to Dk$ for some $k\in\Ke$,
and if two morphisms $A\to Dk$ and $A\to Dk'$ compose
to the same morphism $A\to L$, then there is some $k''\in\Ke$ and morphisms $k\to k''$
and $k'\to k''$ in $\Ke$ such that the two composites $A\to Dk''$ are equal;
see \cite[\S 6.1]{GU} or~\cite[\S 2.1]{MP}.

For a small full subcategory $\Ae$ of $\Ce$ and an 
object $X$ in~$\Ce$, the \emph{canonical diagram} $(\Ae\downarrow X)\to \Ce$
sends each pair $\langle A,f\rangle$ with $f\in\Ce(A,X)$ to~$A$.
Recall from \cite[1.23]{AR} that $\Ae$ is called \emph{dense} in $\Ce$ if 
each object $X$ of $\Ce$ is a colimit of the canonical diagram $(\Ae\downarrow X)\to \Ce$.
A~category $\Ce$ is \emph{bounded} if it has a dense small full subcategory.

A category $\Ce$ is called \emph{$\lambda$\nobreakdash-accessible} if 
$\lambda$\nobreakdash-filtered colimits exist in~$\Ce$ and
there is a set $\Ae$ of $\lambda$\nobreakdash-presentable objects such that every object of $\Ce$ is a 
$\lambda$\nobreakdash-filtered colimit of objects from~$\Ae$.
A~category $\Ce$ is called \emph{accessible} if it is $\lambda$\nobreakdash-accessible for some
regular cardinal~$\lambda$.
As shown in \cite[p.\,226]{AR2} or \cite[p.\,73]{AR}, if $\Ce$ is $\lambda$\nobreakdash-accessible, 
then the full subcategory of its $\lambda$\nobreakdash-presentable objects is essentially small
and, if we denote by $\Ce_{\lambda}$ a set of representatives of all isomorphism
classes of $\lambda$\nobreakdash-presentable objects of~$\Ce$,
then $\Ce_{\lambda}$ is dense in~$\Ce$. Moreover, for every $X\in\Ce$, the slice category
$(\Ce_{\lambda}\downarrow X)$ is $\lambda$\nobreakdash-filtered and $X$ is a colimit of the
canonical diagram $(\Ce_{\lambda}\downarrow X)\to\Ce$. Thus, every accessible
category is bounded.

An accessible category is called \emph{locally presentable} if all colimits exist in it.
It then follows, by \cite[Corollary~1.28]{AR}, that all limits exist as well.
Every category of structures ${\bf Str}\,\Sigma$ is locally presentable \cite[5.1(5)]{AR},
and the forgetful functor ${\bf Str}\,\Sigma\to\Sets^S$ creates limits and colimits,
where $S$ is the set of sorts of~$\Sigma$ and $\Sets^S$ denotes the category of $S$\nobreakdash-sorted sets.

\begin{theorem}
\label{characterization}
Let $\lambda$ be a regular cardinal and let $\Ce$ be a $\lambda$\nobreakdash-accessible
category. Then there is a full embedding of $\Ce$ into a category of relational structures
that preserves $\lambda$\nobreakdash-filtered colimits.
\end{theorem}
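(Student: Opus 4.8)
The plan is to use the $\lambda$\nobreakdash-presentable objects of $\Ce$ as the sorts of a relational language and to embed $\Ce$ by a restricted Yoneda construction. Fix a set $\Ce_\lambda$ of representatives of the isomorphism classes of $\lambda$\nobreakdash-presentable objects of $\Ce$; as recalled above, $\Ce_\lambda$ is dense in $\Ce$, and every $X\in\Ce$ is the colimit of the canonical diagram $(\Ce_\lambda\downarrow X)\to\Ce$. Let $S$ be the set of objects of $\Ce_\lambda$, and let $\Sigma$ be the (finitary) relational signature with set of sorts $S$ having, for each morphism $a\colon A\to B$ of $\Ce_\lambda$, one binary relation symbol $R_a$ of arity $\langle A,B\rangle$; this is a legitimate signature because $\Ce_\lambda$ is small. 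Define $\Phi\colon\Ce\to{\bf Str}\,\Sigma$ by letting $\Phi(X)$ have underlying sorted set $(\Ce(A,X))_{A\in S}$ and
\[
R_a^{\Phi(X)}=\{\,(f,g)\in\Ce(A,X)\times\Ce(B,X):f=g\circ a\,\}
\]
for each $a\colon A\to B$, with $\Phi$ acting on a morphism $u\colon X\to Y$ by post-composition.

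First I would check that $\Phi$ is a full embedding. A homomorphism of $\Sigma$\nobreakdash-structures $\varphi\colon\Phi(X)\to\Phi(Y)$ is a sorted family $(\varphi_A\colon\Ce(A,X)\to\Ce(A,Y))_{A\in S}$ preserving every $R_a$; since the elements of $R_a^{\Phi(X)}$ are exactly the pairs $(g\circ a,g)$ with $g\in\Ce(B,X)$, preservation of $R_a$ amounts precisely to the identity $\varphi_A(g\circ a)=\varphi_B(g)\circ a$ for all such $g$. Thus the homomorphisms $\Phi(X)\to\Phi(Y)$ are exactly the natural transformations between the restrictions to $\Ce_\lambda$ of the representable functors $\Ce(-,X)$ and $\Ce(-,Y)$. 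Since $\Ce_\lambda$ is dense, the restricted Yoneda functor $X\mapsto\Ce(-,X)|_{\Ce_\lambda}$ is full and faithful, so these natural transformations correspond bijectively and naturally to the morphisms $X\to Y$ in $\Ce$; hence $\Phi$ is full and faithful.

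The main step --- and the one needing the most care --- is that $\Phi$ preserves $\lambda$\nobreakdash-filtered colimits. Because $\Sigma$ is relational, the forgetful functor ${\bf Str}\,\Sigma\to\Sets^{S}$ creates colimits, so a $\lambda$\nobreakdash-filtered colimit in ${\bf Str}\,\Sigma$ is computed sort by sort on underlying sets, with each relation symbol interpreted by the union of the images of its interpretations in the diagram; since every relation symbol of $\Sigma$ has finite arity and finite products commute with filtered colimits in $\Sets$, this union is the filtered colimit of those interpretations. Now let $D\colon\Ke\to\Ce$ be a $\lambda$\nobreakdash-filtered diagram with colimit cocone $(\delta_k\colon Dk\to L)_{k\in\Ke}$. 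As every $A\in S$ is $\lambda$\nobreakdash-presentable and $\Ke$ is $\lambda$\nobreakdash-filtered, $\Ce(A,L)=\colim_k\Ce(A,Dk)$ for each $A$, which identifies the underlying sorted set of $\Phi(L)$ with the colimit of those of the $\Phi(Dk)$. A second use of $\lambda$\nobreakdash-presentability --- first factoring a given pair $(f,g)\in R_a^{\Phi(L)}$ through a common stage $Dk$, and then forcing the equation $f=g\circ a$ to hold already at a later stage --- shows that $R_a^{\Phi(L)}$ is the filtered colimit of the relations $R_a^{\Phi(Dk)}$. Hence $\Phi(L)$, together with the cocone $(\Phi(\delta_k))_k$, is the colimit of $\Phi\circ D$ in ${\bf Str}\,\Sigma$, and the theorem follows.
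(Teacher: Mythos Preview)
Your proof is correct and follows essentially the same route as the paper: both take the sorts to be a set of representatives of $\lambda$-presentable objects, add one binary relation symbol per morphism of that subcategory, and send $X$ to the sorted set $(\Ce(A,X))_A$ with the graphs of precomposition as relations. The only cosmetic difference is that the paper factors the embedding as $\Ce\to\Sets^{\Ae^{\rm op}}\to{\bf Str}\,\Sigma$ and cites \cite[1.26, 1.41]{AR} for fullness and for preservation of $\lambda$-filtered colimits at each stage, whereas you describe the composite directly and verify both properties by hand from $\lambda$-presentability; the content is the same.
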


\begin{proof}
Let us assume, with greater generality, that $\Ce$ is a bounded category
and let $\Ae$ be a dense small full subcategory of~$\Ce$.
Denote by $\Sets^{{\Ae}^{\rm op}}$ the category of functors $\Ae^{\rm op}\to\Sets$,
where $\Ae^{\rm op}$ is the opposite of~$\Ae$. Then there are full embeddings
\begin{equation}
\label{embeddings}
\Ce\longrightarrow \Sets^{\Ae^{\rm op}}\longrightarrow {\bf Str}\,\Sigma,
\end{equation}
defined as follows \cite[Ch.\,1]{AR}:
The embedding of $\Ce$ into $\Sets^{{\Ae}^{\rm op}}$ is of Yoneda type, sending
each object $X$ to the restriction of $\Ce(-,X)$ to~$\Ae^{\rm op}$.
The fact that it is full and faithful is proved in \cite[Proposition~1.26]{AR}.
The signature $\Sigma$ is chosen by picking the objects of $\Ae$ as sorts 
and the morphisms of $\Ae^{\rm op}$ as relation symbols.
The full embedding of $\Sets^{{\Ae}^{\rm op}}$ into ${\bf Str}\,\Sigma$ 
sends each functor $F$ to the $\Ae$\nobreakdash-sorted set $\{FA:A\in\Ae\}$ together with a
relation $\{(x,(Ff)x):x\in FA\}\subset FA\times FB$ for each morphism $f\colon B\to A$ in~$\Ae$.
Hence, \eqref{embeddings} sends each object $X\in\Ce$ to
\[
\left\langle \{ \Ce(A,X) : A\in\Ae \},\; 
\{ \{(\alpha,\alpha\circ f):\alpha\in\Ce(A,X)\} : f\in\Ae(B,A)\} \right\rangle.
\]

If $\Ce$ is $\lambda$\nobreakdash-accessible and we let $\Ae$ be a set
of representatives of all isomorphism classes of $\lambda$\nobreakdash-presentable objects in~$\Ce$,
then \eqref{embeddings} preserves $\lambda$\nobreakdash-filtered colimits, since 
the first arrow preserves $\lambda$\nobreakdash-filtered colimits by \cite[Proposition~1.26]{AR},
and the second arrow preserves all filtered colimits; see \cite[Example~1.41]{AR}.
\end{proof}

As in \cite[Definition~2.35]{AR}, we say that a subcategory $\Ce$ of a category $\De$ 
is \emph{accessibly embedded} if $\Ce$ is full and closed
under $\lambda$\nobreakdash-filtered colimits in $\De$ for some regular cardinal~$\lambda$.
Hence, in particular, $\Ce$ is isomorphism-closed; that is, every object of $\De$ which
is isomorphic to an object of $\Ce$ is in~$\Ce$. Moreover, the inclusion $\Ce\hookrightarrow\De$
creates $\lambda$\nobreakdash-filtered colimits.
If $\De$ is accessible and
$\Ce$ is accessibly embedded into~$\De$, then $\Ce$ is itself accessible if and only if,
for some regular cardinal~$\lambda$,
every $\lambda$\nobreakdash-filtered colimit of split subobjects of objects of $\Ce$
is in~$\Ce$; see \cite[Corollary~2.36]{AR} for details.

Vop\v{e}nka's principle implies that every full embedding between accessible
categories is accessible. The same conclusion can be inferred from the existence
of sufficiently large $\Cn$\nobreakdash-extendible cardinals~\cite{BBT}.

A~theory $T$ in a $\lambda$\nobreakdash-ary language is \emph{basic} if each of its sentences 
has the form $\forall \{x_i : i\in I\}\,(\varphi(x_i)_{i\in I}\to \psi(x_i)_{i\in I})$ 
where $\varphi$ and $\psi$ are disjunctions of positive-primitive formulas and $|I|<\lambda$.
A~formula is \emph{positive-primitive} if it has the form
$\exists \{y_j : j\in J\}\,\eta((y_j)_{j\in J},(z_k)_{k\in K})$ 
in which $\eta$ is a conjunction of atomic formulas and $|J|,|K|<\lambda$.

It follows from Theorem~\ref{characterization} that every accessible category
is equivalent to an accessibly embedded subcategory of a category of relational structures,
namely to the closure of the image of \eqref{embeddings} under isomorphisms.
Moreover, the following fundamental fact is proved in~\cite{AR}:

\begin{theorem}
\label{AdaRos}
Every accessibly embedded accessible subcategory of a category of 
structures is a category of models for some basic theory, and for every basic theory $T$ in some 
language~$\Le_{\lambda}(\Sigma)$, the category ${\bf Mod}\,T$ is accessible and accessibly
embedded into~${\bf Str}\,\Sigma$.
\end{theorem}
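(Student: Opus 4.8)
The plan is to prove the two implications of Theorem~\ref{AdaRos} separately, following the development in \cite[Chapters~2, 4 and~5]{AR}. Start with the implication from basic theories to accessible categories, so let $T$ be a basic theory in $\Le_{\lambda}(\Sigma)$. First I would record that every positive-primitive formula is preserved by $\lambda$-filtered colimits in ${\bf Str}\,\Sigma$: a conjunction of atomic formulas involves fewer than $\lambda$ of the elements, so it is already satisfied at some stage of a $\lambda$-filtered colimit whenever it is satisfied in the colimit, and the outer block of fewer than $\lambda$ existential quantifiers can likewise be witnessed at a single stage. Hence each sentence $\forall\{x_i:i\in I\}\,(\varphi\to\psi)$ with $\varphi,\psi$ disjunctions of positive-primitive formulas is preserved by $\lambda$-filtered colimits, so ${\bf Mod}\,T$ is closed under $\lambda$-filtered colimits in ${\bf Str}\,\Sigma$ and is therefore accessibly embedded. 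For accessibility I would invoke the standard dictionary between positive-primitive formulas and morphisms of $\lambda$-presentable structures: a positive-primitive formula $\exists\{y_j:j\in J\}\,\eta((y_j)_{j\in J},(z_k)_{k\in K})$ corresponds to a morphism $h\colon P\to Q$ of $\lambda$-presentable $\Sigma$-structures, where $P$ is freely presented on the variables $(z_k)_{k\in K}$ and $Q$ is $P$ with the $(y_j)_{j\in J}$ adjoined subject to the atomic relations in $\eta$. Under this dictionary a basic implication becomes injectivity with respect to a cone of such morphisms, and disjunctions become cones of cones; since there is only a set of $\lambda$-presentable $\Sigma$-structures up to isomorphism, ${\bf Mod}\,T$ is exhibited as a cone-injectivity class relative to a set of cones of morphisms between $\lambda$-presentable objects of the locally presentable category ${\bf Str}\,\Sigma$, and such classes are accessible by the results of \cite[Ch.\,4]{AR}.

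For the converse, let $\Ce$ be an accessibly embedded accessible subcategory of ${\bf Str}\,\Sigma$. The essential preliminary step is a uniformization of cardinals: using the theory of sharply $\lambda$-accessible categories from \cite[Ch.\,2]{AR}, choose a single regular cardinal $\lambda$ large enough that $\Ce$ is $\lambda$-accessible and closed under $\lambda$-filtered colimits in ${\bf Str}\,\Sigma$, that $\Sigma$ is $\lambda$-ary, and that the $\lambda$-presentable objects of $\Ce$ are exactly the $\lambda$-presentable objects of ${\bf Str}\,\Sigma$ that lie in $\Ce$. Fix a set $\Ae$ of representatives of the isomorphism classes of those objects. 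Then for an arbitrary $X\in{\bf Str}\,\Sigma$ one has $X\in\Ce$ if and only if the canonical diagram $(\Ae\downarrow X)\to{\bf Str}\,\Sigma$ is $\lambda$-filtered and has $X$ as its colimit. The remaining task is to express this criterion by cone-injectivity with respect to morphisms between $\lambda$-presentable structures: $\lambda$-filteredness of $(\Ae\downarrow X)$ is a conjunction of amalgamation requirements for families of fewer than $\lambda$ maps from $\Ae$-objects into $X$, and the assertion that $X$ is the colimit says that every element of every sort and every instance of a relation in $X$ is already witnessed at some $\Ae$-stage; each of these is a cone-injectivity requirement for suitable morphisms $P\to Q$ of $\lambda$-presentable structures. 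Translating these morphisms back through the dictionary of the first part produces a basic theory $T$ in $\Le_{\lambda}(\Sigma)$ with ${\bf Mod}\,T=\Ce$.

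I expect the main obstacle to be the cardinal uniformization together with the verification that the displayed criterion is both correct and expressible by a \emph{set} of cone-injectivity conditions: this is where one must use the finer structure theory of accessible categories---the comparison relation between regular cardinals, the behaviour of presentability ranks, and the fact that closure of $\Ce$ under $\lambda$-filtered colimits forces membership in $\Ce$ to be detectable on $\lambda$-presentable approximations (the ingredient behind \cite[Corollary~2.36]{AR}). Once these are in place, the passage between positive-primitive formulas and morphisms of $\lambda$-presentable structures in both directions is routine bookkeeping, and the two halves assemble into the stated equivalence.
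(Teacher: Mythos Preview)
The paper does not prove this theorem; its entire proof is the citation ``This is shown in \cite[Theorem~4.17 and Theorem~5.35]{AR}.'' Your proposal is a faithful sketch of the argument actually given in that reference: the dictionary between positive-primitive formulas and morphisms of $\lambda$-presentable $\Sigma$-structures, the identification of basic theories with cone-injectivity classes, and the cardinal uniformization needed for the converse are exactly the ingredients of \cite[Ch.~4--5]{AR}. So your approach and the paper's coincide, in the sense that both point to the same source; you have simply unpacked what the paper leaves as a black-box citation.

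One minor point worth tightening if you write this out: in your criterion ``$X\in\Ce$ iff $(\Ae\downarrow X)$ is $\lambda$-filtered with colimit $X$'', be explicit that the colimit is computed in ${\bf Str}\,\Sigma$ and that the assertion is that the canonical comparison map is an isomorphism. The forward direction of this equivalence relies on your uniformization clause that the $\lambda$-presentable objects of $\Ce$ are exactly the $\lambda$-presentable objects of ${\bf Str}\,\Sigma$ lying in $\Ce$; this is correct but nontrivial, and is where the sharper accessibility results of \cite[Ch.~2]{AR} enter, as you anticipate.
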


\begin{proof}
This is shown in \cite[Theorem~4.17 and Theorem~5.35]{AR}.
\end{proof}

We shall use the following terminology in order to simplify statements:

\begin{definition}
{\rm
An \emph{accessible category of structures} is a full subcategory
of ${\bf Str}\,\Sigma$ that is accessible and accessibly embedded,
for some signature~$\Sigma$.
}
\end{definition}

We saw in Proposition~\ref{nonabsolute} that each category ${\bf Mod}\,T$ is {\boldmath$\Delta_2$} with parameters $\{\lambda,\Sigma,T\}$. Hence, Theorem~\ref{AdaRos} implies that every accessible category of structures is at most {\boldmath$\Delta_2$}. In many cases the complexity will be lower; for example, if $\Sigma$ is finitary, then,
according to Proposition~\ref{nonabsolute}, ${\bf Mod}\,T$ is {\boldmath$\Delta_1$} with parameters $\{\Sigma,T\}$.
This amends the statement of \cite[Proposition~4.2]{BCM}.

Although, in the rest of the article, we shall restrict most of our discussion
to accessible categories of structures, results involving only concepts that are invariant 
under equivalence of categories will remain true for arbitrary accessible categories,
by Theorem~\ref{characterization}. 

A regular cardinal $\kappa$ is said to be \emph{sharply bigger} than another regular cardinal $\lambda$ if $\kappa>\lambda$ and, for each set $X$ of cardinality less than~$\kappa$, the set ${\mathcal P}_{\lambda}(X)$ has a cofinal subset of cardinality less than~$\kappa$. This notion was introduced in \cite[\S 2.3]{MP},
where it was proved that $\kappa$ is sharply bigger than $\lambda$ if and only if
every $\lambda$\nobreakdash-accessible category is $\kappa$\nobreakdash-accessible; see also \cite[Theorem~2.11]{AR}.

If $\kappa$ has the property that $\nu^{<\lambda}<\kappa$ for all $\nu<\kappa$ (which was used in Theorem~\ref{vpsigma1} above) and $\kappa>\lambda$, then $\kappa$ is sharply bigger than~$\lambda$, since, for a set $X$ of cardinality~$\nu$, the cardinality of ${\mathcal P}_{\lambda}(X)$ is precisely $\nu^{<\lambda}$. Therefore, if $\lambda\le\mu$, then
$\left(2^{\mu}\right)^+$ is sharply bigger than~$\lambda$.
This was first observed in \cite[Proposition~2.3.5]{MP} and
shows that for every $\lambda$ there are arbitrarily large regular cardinals sharply bigger than~$\lambda$.
Moreover, if $\kappa$ is strongly inaccessible and $\kappa>\lambda$, 
then $\kappa$ is sharply bigger than~$\lambda$.

In what follows, for an $S$\nobreakdash-sorted signature $\Sigma$ and a $\Sigma$\nobreakdash-structure~$A$, the \emph{cardinality} of $A$ designates the sum $\Sigma_{s\in S}\,|A_s|$ of the cardinalities of the components of its underlying $S$\nobreakdash-sorted set. 

\begin{lemma}
\label{MakkaiPare}
Let $\Sigma$ be a $\lambda$\nobreakdash-ary signature for a regular cardinal~$\lambda$, and
let $\Ce$ be a full $\lambda$\nobreakdash-accessible subcategory of ${\bf Str}\,\Sigma$ closed under $\lambda$\nobreakdash-filtered colimits.
Let $\kappa$ be a regular cardinal sharply bigger than $\lambda$ and bigger than the cardinalities of all $\lambda$\nobreakdash-presentable objects in~$\Ce$,
and such that $\Sigma\in H(\kappa)$.
Then an object $A\in\Ce$ is 
$\kappa$\nobreakdash-presentable if and only if its cardinality is smaller than~$\kappa$.
\end{lemma}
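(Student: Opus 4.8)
The plan is to deduce the lemma from the known description, valid because $\kappa$ is sharply bigger than~$\lambda$, of the $\kappa$-presentable objects of a $\lambda$-accessible category, combined with the elementary observation that $\lambda$-filtered colimits in ${\bf Str}\,\Sigma$ are computed on underlying $S$-sorted sets, since $\Sigma$ is $\lambda$-ary. Throughout, I fix a set $\Ce_{\lambda}$ of representatives of the isomorphism classes of $\lambda$-presentable objects of~$\Ce$; by hypothesis every member of $\Ce_{\lambda}$ has cardinality less than~$\kappa$, and, since $\Sigma\in H(\kappa)$, one also has $|\Sigma|<\kappa$ and $\Ce_{\lambda}\subseteq H(\kappa)$.

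The first step is a preliminary remark: if $D\colon\Ke\to\Ce$ is a $\lambda$-filtered diagram with each value in~$\Ce_{\lambda}$ and with fewer than $\kappa$ morphisms, then its colimit $C$ (which exists in $\Ce$ and, since $\Ce$ is closed under $\lambda$-filtered colimits in ${\bf Str}\,\Sigma$, agrees with the colimit there) satisfies $|C|<\kappa$ and is $\kappa$-presentable. Indeed, the underlying set of $C$ is $\colim_{k}$ of the underlying sets of the $D(k)$, so $|C|\le\sum_{k\in\Ke}|D(k)|$, which is less than~$\kappa$ because $\Ke$ has fewer than $\kappa$ objects, each value $|D(k)|<\kappa$, and $\kappa$ is regular; and $C$ is $\kappa$-presentable because $\Ce_{\lambda}\subseteq\Ce_{\kappa}$ and $\kappa$-presentable objects are closed under $\kappa$-small colimits \cite[Proposition~1.16]{AR}. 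The same computation, applied to a retract, yields one implication of the lemma: if $A\in\Ce$ is $\kappa$-presentable, then by the characterization of $\kappa$-presentable objects in a $\lambda$-accessible category when $\kappa$ is sharply bigger than~$\lambda$ \cite[\S 2.3]{MP} (see also \cite[Theorem~2.11]{AR} and its proof), $A$ is a retract of such a colimit~$C$; applying the faithful forgetful functor to $\Sets^{S}$, the underlying set of $A$ is a retract of that of~$C$, so $|A|\le|C|<\kappa$.

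For the converse, suppose $|A|<\kappa$; the goal is to exhibit $A$ as a retract of, in the best case as, the colimit of a $\lambda$-filtered diagram with fewer than $\kappa$ morphisms valued in~$\Ce_{\lambda}$, whence $A$ is $\kappa$-presentable by the first step. Starting from the canonical diagram $D_{A}\colon(\Ce_{\lambda}\downarrow A)\to\Ce$, which is $\lambda$-filtered with colimit~$A$, I would use the combinatorial content of ``$\kappa$ sharply bigger than~$\lambda$'' — namely that every $\lambda$-filtered category is the $\kappa$-directed union of $\lambda$-filtered subcategories having fewer than $\kappa$ morphisms, the property underlying the passage from $\lambda$-accessibility to $\kappa$-accessibility in \cite[Theorem~2.11]{AR} — to write $(\Ce_{\lambda}\downarrow A)$ as such a union $\bigcup_{s}\Ke_{s}$ and to select an~$s$ for which the comparison map $\colim D_{A}|_{\Ke_{s}}\to A$ is already an isomorphism, or at least a split epimorphism. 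Here one must ensure that the $\Ke_{s}$ can be chosen so as to absorb, beyond enough objects to cover every element of the underlying set of~$A$, the fewer-than-$\lambda$-at-a-time data needed for the colimit over $\Ke_{s}$ to reproduce not only the underlying $S$-sorted set but also the operations and relations of~$A$; this uses the explicit formula for $\lambda$-filtered colimits in ${\bf Str}\,\Sigma$ together with $|\Sigma|<\kappa$.

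I expect the heart of the argument — and its main obstacle — to be precisely this last point in the converse: controlling the size of the subcategory $\Ke_{s}$ while guaranteeing that its colimit recovers $A$ as a $\Sigma$-structure and not merely as an $S$-sorted set. This is the genuine interplay between the covering property of sharply-bigger cardinals and the combinatorics of $\lambda$-ary structures, and it is where the hypotheses ``sharply bigger than~$\lambda$'', ``bigger than the cardinalities of the $\lambda$-presentable objects of~$\Ce$'', and ``$\Sigma\in H(\kappa)$'' all enter; the forward implication, by contrast, is essentially the cardinality bookkeeping of the first step.
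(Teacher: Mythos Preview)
Your treatment of the implication ``$\kappa$-presentable $\Rightarrow |A|<\kappa$'' is essentially the paper's: both invoke \cite[Proposition~2.3.11]{MP} to write a $\kappa$-presentable object as a $\lambda$-filtered colimit (or retract thereof) of $\lambda$-presentables indexed by a category with fewer than $\kappa$ morphisms, and then do the cardinality bookkeeping using that colimits are created in $\Sets^S$. This is also the \emph{only} place the paper uses ``sharply bigger''.

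For the converse, however, you take a genuinely different and unnecessarily roundabout route. The paper does \emph{not} try to realize $A$ as the colimit of a small $\lambda$-filtered subdiagram of $(\Ce_\lambda\downarrow A)$; instead it verifies the definition of $\kappa$-presentability directly. Given a $\kappa$-filtered diagram $D\colon\Ke\to\Ce$ with colimit $L$ (which, being $\lambda$-filtered as well, is computed in ${\bf Str}\,\Sigma$) and a homomorphism $f\colon A\to L$, one first factors $f$ as an $S$-sorted function through some $Dk$, using $|A_s|<\kappa$ for each $s$ and $|S|<\kappa$. Then, since the total number of pairs (operation or relation symbol, $\alpha$-sequence in $A$ with $\alpha<\lambda$) is less than $\kappa$, one pushes forward along $k\to l$ in the $\kappa$-filtered $\Ke$ to arrange that the composite $A\to Dl$ is a homomorphism; essential uniqueness is handled the same way. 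No appeal to the sharply-bigger decomposition of $(\Ce_\lambda\downarrow A)$ is needed here, and what you flag as ``the heart of the argument --- and its main obstacle'' simply disappears.

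So your plan is not wrong in spirit, but you have located the difficulty in the wrong implication and are invoking the sharply-bigger machinery where an elementary hands-on argument suffices. The trade-off: your approach, if completed, would exhibit $A$ explicitly as a small $\lambda$-filtered colimit of $\lambda$-presentables (a slightly stronger conclusion), whereas the paper's approach gives $\kappa$-presentability with less work but without that explicit description.
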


\begin{proof}
Let $S$ be the set of sorts of $\Sigma$; let $\Sigma_{\rm op}$ be its set of operation symbols and $\Sigma_{\rm rel}$ its set of relation symbols. Let $A$ be a $\Sigma$\nobreakdash-structure, and
suppose first that its cardinality $\Sigma_{s\in S}\,|A_s|$ is smaller than~$\kappa$. Let $D\colon\Ke\to\Ce$ be a $\kappa$\nobreakdash-filtered
diagram with a colimit~$L$. Then $D$ is also $\lambda$\nobreakdash-filtered and therefore the inclusion of $\Ce$ into ${\bf Str}\,\Sigma$ preserves its colimit.
Suppose given a homomorphism $f\colon A\to L$. 
Since every set $A_s$ has cardinality less than $\kappa$ and $D$ is $\kappa$\nobreakdash-filtered, each function $f_s\colon A_s\to L_s$
factors through $D(k_s)$ for some $k_s\in\Ke$. Since $|S|<\kappa$, we infer that $f$ factors (as a function) through $Dk$ for some $k\in\Ke$.
Moreover, since the cardinality of the set of all $\alpha$\nobreakdash-sequences
$\langle a_i:i\in\alpha\rangle$ with $a_i\in A_{s_i}$ for all~$i$ and with $\alpha<\lambda$ is less than~$\kappa$, and the cardinalities of the sets  $\Sigma_{\rm op}$ and $\Sigma_{\rm rel}$ are also smaller than~$\kappa$,
we can find a morphism $k\to l$ in $\Ke$ such that the composite $A\to Dk\to Dl$
is a homomorphism of $\Sigma$\nobreakdash-structures.
For the same reason, given two homomorphisms $A\to Dk$ and $A\to Dk'$ which coincide in~$L$,
there is an object $k''\in\Ke$ and morphisms $k\to k''$ and $k'\to k''$ such that the composites
$A\to Dk\to Dk''$ and $A\to Dk'\to Dk''$ are equal. Hence $A$ is $\kappa$\nobreakdash-presentable.

For the converse, by \cite[Proposition~2.3.11]{MP}, if $\kappa$ is sharply bigger than $\lambda$ then 
every $\kappa$\nobreakdash-presentable object $A$ in $\Ce$ is a 
$\lambda$\nobreakdash-filtered colimit of $\lambda$\nobreakdash-presentable objects indexed by a category with less than $\kappa$ morphisms. Therefore, since each $\lambda$\nobreakdash-presentable object has cardinality smaller than $\kappa$ and the colimit is created in~$\Sets^S$, 
it follows that $A$ also has cardinality smaller than~$\kappa$.
\end{proof}

The following is our main result in this section.

\begin{theorem}
\label{maintheorem}
Let $\Ce$ be an accessible category of structures 
and let $\Se$ be a {\boldmath$\Sigma_n$} full subcategory of~$\Ce$, where $n\ge 1$. Suppose that 
there is a proper class of supercompact cardinals if $n=2$ or that
there is a proper class of $C(n-2)$\nobreakdash-extendible cardinals
if $n\ge 3$.
Then there is a dense small full subcategory $\De\subseteq\Se$ and there are arbitrarily large regular cardinals $\kappa$
such that, for all $Y\in\Se$, the category $(\De\downarrow Y)$ is $\kappa$\nobreakdash-filtered and
$Y$ is a colimit of the canonical diagrams $(\De\downarrow Y)\to\Se$ and $(\De\downarrow Y)\to\Ce$.
\end{theorem}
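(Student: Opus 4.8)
The plan is to reduce Theorem~\ref{maintheorem} to the "small-orthogonality" machinery via the $\mathrm{SVP}$-type statements established earlier (Theorems~\ref{theorem1} and~\ref{theorem4}), together with the accessibility-theory facts collected in Section~\ref{accessiblecategories}. Since $\Ce$ is an accessible category of structures, we may fix a regular cardinal $\lambda$ such that $\Ce$ is a $\lambda$\nobreakdash-accessible, accessibly embedded full subcategory of some ${\bf Str}\,\Sigma$ with $\Sigma$ a $\lambda$\nobreakdash-ary signature. By Proposition~\ref{nonabsolute} and Theorem~\ref{AdaRos}, $\Ce$ is {\boldmath$\Delta_2$}, so $\Se$, being {\boldmath$\Sigma_n$} as a subclass, is a {\boldmath$\Sigma_{\max(n,2)}$} class of $\Sigma$\nobreakdash-structures; in all the cases covered by the hypothesis ($n=2$ with supercompacts, or $n\ge 3$ with $C(n-2)$\nobreakdash-extendibles) we therefore have a proper class of cardinals $\kappa$ for which ${\rm SVP}_{\kappa}(\mathbf{\Sigma_n})$ holds and which moreover can be taken as large as we like and with the relevant parameters in $H(\kappa)=V_\kappa$ (using that supercompact and $C(n-2)$\nobreakdash-extendible cardinals are strongly inaccessible).

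\textbf{The core construction.} First I would pick, for the fixed $\lambda$, a regular cardinal $\kappa_0$ that is sharply bigger than $\lambda$, bigger than the cardinalities of all $\lambda$\nobreakdash-presentable objects of $\Ce$, with $\Sigma$ and the parameter $p$ defining $\Se$ in $H(\kappa_0)$, and large enough that the ${\rm SVP}$ statement applies. By Theorem~\ref{theorem1} (if $n=2$) or Corollary~\ref{theorem5} (if $n\ge 3$) applied with $A=\emptyset$, every $Y\in\Se$ has a subobject $X\in\Se\cap V_{\kappa_0}$; in fact the versions with a homomorphism $g\colon A\to Y$ give more. Now set $\De$ to be a set of representatives of the isomorphism classes of objects of $\Se$ lying in $V_{\kappa_0}$ that are $\kappa_0$\nobreakdash-presentable in $\Ce$ (equivalently, by Lemma~\ref{MakkaiPare}, of cardinality $<\kappa_0$); this is a set because $V_{\kappa_0}$ is a set, and $\De$ is a small full subcategory of $\Se$. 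The claim then is that $\De$ is dense in $\Se$ and that $(\De\downarrow Y)$ is $\kappa_0$\nobreakdash-filtered with $Y$ the colimit of the canonical diagram, computed either in $\Se$ or in $\Ce$.

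\textbf{Density and filteredness.} For each $Y\in\Se$, since $\Ce$ is $\kappa_0$\nobreakdash-accessible (as $\kappa_0$ is sharply bigger than $\lambda$), $Y$ is the colimit in $\Ce$ of the canonical diagram $(\Ce_{\kappa_0}\downarrow Y)\to\Ce$, which is $\kappa_0$\nobreakdash-filtered, where $\Ce_{\kappa_0}$ denotes the $\kappa_0$\nobreakdash-presentable objects of $\Ce$. The point is to show that the subcategory of this diagram given by those $\langle A,f\rangle$ with $A\in\Se$ — which is essentially $(\De\downarrow Y)$ — is cofinal, i.e. $\kappa_0$\nobreakdash-filtered and final, so that it computes the same colimit. \textbf{This cofinality step is the main obstacle.} To establish it one uses the strengthened version of Theorem~\ref{theorem1}/Corollary~\ref{theorem5}: given any $\kappa_0$\nobreakdash-presentable $A\in\Ce_{\kappa_0}$ with a map $A\to Y$, one needs to factor it through some object of $\Se$ of size $<\kappa_0$ mapping to $Y$ — but the theorems as stated only produce such factorizations when the source is itself in $\Se$. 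The way around this is to note that $Y\in\Se$ is, as an object of $\Ce$, a $\lambda$\nobreakdash-filtered colimit of $\lambda$\nobreakdash-presentable objects of $\Ce$, and to choose $\kappa_0$ so large (and closed under the relevant operations, using that $\Se$ is closed under suitable colimits of its split subobjects by \cite[Corollary~2.36]{AR}, since $\Se$ is itself accessible when definable with sufficiently low complexity) that the objects of $\Se$ of cardinality $<\kappa_0$ are closed under pushouts along maps from $\Ce_{\kappa_0}$-objects and under $<\!\kappa_0$-small colimits inside $\Se$; combined with the factorization property from Theorem~\ref{theorem1}(a) applied to the comparison maps, this gives both the required factorizations and the uniqueness-up-to-refinement clause, hence $\kappa_0$\nobreakdash-filteredness of $(\De\downarrow Y)$ and finality of the inclusion into $(\Ce_{\kappa_0}\downarrow Y)$. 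Finally, since $\Se$ is closed under $\kappa_0$\nobreakdash-filtered colimits in $\Ce$ (accessibly embedded, for $\kappa_0$ large enough), the colimit of $(\De\downarrow Y)\to\Ce$ computed in $\Ce$ lands in $\Se$ and equals $Y$, which gives the statement for both $(\De\downarrow Y)\to\Se$ and $(\De\downarrow Y)\to\Ce$; letting $\kappa_0$ range over the proper class of admissible cardinals yields "arbitrarily large $\kappa$."
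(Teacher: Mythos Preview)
Your overall strategy---choose a suitable $\kappa$, take $\De$ to consist of representatives of $\Se\cap H(\kappa)$, and prove that $(\De\downarrow Y)$ is cofinal in $(\Ce_{\kappa}\downarrow Y)$---is exactly the paper's. However, you misread the key input and this leads you to invent an obstacle that is not there.

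You write that Theorems~\ref{theorem1}(a) and Corollary~\ref{theorem5}(a) ``only produce such factorizations when the source is itself in~$\Se$''. This is false: in those statements the source $A$ is an \emph{arbitrary} $\Sigma$\nobreakdash-structure in $V_{\kappa}$, and only the target $Y$ is required to lie in the {\boldmath$\Sigma_n$} class. Since every $A\in\Ce_{\kappa}$ is a $\Sigma$\nobreakdash-structure in $H(\kappa)=V_{\kappa}$ (by Lemma~\ref{MakkaiPare} and the choice of~$\kappa$), the factorization theorems apply directly: given $g\colon A\to Y$ with $A\in\Ce_{\kappa}$ and $Y\in\Se$, they yield $X\in\Se\cap V_{\kappa}$ and a commutative triangle $A\xrightarrow{f} X\xrightarrow{e} Y$. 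That is already the cofinality of $(\De\downarrow Y)$ in $(\Ce_{\kappa}\downarrow Y)$, since the latter is filtered. Your detour through pushouts, closure of $\Se$ under small colimits, and accessibility of $\Se$ (which is essentially what the theorem is proving) is unnecessary and, as sketched, circular.

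Two smaller points. First, you omit the case $n=1$, which needs no large cardinals: one takes $\kappa=(2^{\mu})^+$ for suitable $\mu\ge\lambda$ and invokes Theorem~\ref{vpsigma1}(a) instead. Second, you do not need $\Se$ to be closed under $\kappa$\nobreakdash-filtered colimits in~$\Ce$: once $(\De\downarrow Y)$ is cofinal, the colimit in $\Ce$ is~$Y$, and since $Y\in\Se$ and $\Se$ is full in~$\Ce$, the same cocone exhibits $Y$ as the colimit in~$\Se$.
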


\begin{proof}
Note first that, if $\Se$ is essentially small, then the result trivially holds with $\De$
a full subcategory of $\Se$ containing one representative of each isomorphism class
of objects in~$\Se$, if $\kappa$ is chosen bigger than the cardinality of the set of objects of~$\De$. 
Therefore we assume from now on that there is a proper class of nonisomorphic objects in~$\Se$.

Choose a $\Sigma_n$ formula defining $\Se$ with a set $p$ of parameters.
Suppose that $\Ce$ embeds accessibly into ${\bf Str}\,\Sigma$ for a
signature~$\Sigma$, and pick a regular cardinal $\lambda$ such that $\Sigma$ is $\lambda$\nobreakdash-ary and $\Ce$ is $\lambda$\nobreakdash-accessible and 
closed under $\lambda$\nobreakdash-filtered colimits in~${\bf Str}\,\Sigma$.
Let $\Ce_{\lambda}$ be a set of representatives of all isomorphism classes of 
$\lambda$\nobreakdash-presentable objects in~$\Ce$.

Now let $\alpha$ be any given ordinal.
Choose a regular cardinal $\kappa$ bigger than $\alpha$ and~$\lambda$, and large enough so that
 each object in $\Ce_{\lambda}$ is in~$H(\kappa)$ and $\{p,\Sigma\}\in H(\kappa)$ as well. Moreover, if $n=1$ then pick $\kappa$ of the form
$\left(2^{\mu}\right)^+$ with $\mu\ge\lambda$; if $n=2$
then choose instead $\kappa$ supercompact, and if $n\ge 3$ then choose it $C(n-2)$\nobreakdash-extendible. With any of these choices,
$\kappa$ is sharply bigger than $\lambda$ and therefore $\Ce$ is $\kappa$\nobreakdash-accessible.

Let $\De$ be a full subcategory of $\Se$ containing one representative
of each isomorphism class of objects in the set $\Se\cap H(\kappa)$.
Note that, since each object of $\De$ is
in~$H(\kappa)$, all objects of $\De$ are $\kappa$\nobreakdash-presentable in~$\Ce$,
by Lemma~\ref{MakkaiPare}.

Let $\Ce_{\kappa}$ be a set of representatives of all isomorphism classes of 
$\kappa$\nobreakdash-present\-able objects of~$\Ce$, chosen so that $\De\subseteq\Ce_{\kappa}$ and all objects of $\Ce_{\kappa}$ are in~$H(\kappa)$.
The latter is possible since, if $A\in\Ce$ and $A$ is $\kappa$\nobreakdash-presentable, then $A$ has cardinality smaller than $\kappa$ by Lemma~\ref{MakkaiPare} and therefore $A\cong A'$ as $\Sigma$\nobreakdash-structures for some $A'\in H(\kappa)$. Since $\Ce$ is isomorphism-closed, $A'$ is in $\Ce$ and we may pick $A'$ as a member of~$\Ce_{\kappa}$.

Let $Y$ be any object of~$\Se$.
Since $\Ce$ is $\kappa$\nobreakdash-accessible, we know that
$Y$ is a colimit of the canonical diagram $(\Ce_{\kappa}\downarrow Y)\to \Ce$,
which is $\kappa$\nobreakdash-filtered, by~\cite[p.\,73]{AR}.
Therefore, if we prove that $(\De\downarrow Y)$ is \emph{cofinal} 
in~$(\Ce_{\kappa}\downarrow Y)$, it will then follow that $Y$ is a colimit
of the canonical diagram $(\De\downarrow Y)\to \Ce$, and that $(\De\downarrow Y)$
is $\kappa$\nobreakdash-filtered. Moreover, since $Y$ is in~$\Se$, we shall be able to conclude 
that $Y$ is also a colimit of the canonical diagram $(\De\downarrow Y)\to \Se$, as we wanted to show.

Thus, towards proving that $(\De\downarrow Y)$ is cofinal in~$(\Ce_{\kappa}\downarrow Y)$, 
let $A$ be any object of $\Ce_{\kappa}$ and let a morphism $g\colon A\to Y$ be given. If $n=1$, then,
since $A\in H(\kappa)$, it follows from part~(a) of Theorem~\ref{vpsigma1} that
there is an object $\langle X,f\rangle$ in $(A\downarrow\Se)$ with $X\in\Se\cap H(\kappa)$,
together with an elementary embedding $e\colon X\to Y$ of $\Sigma$\nobreakdash-structures such that $e\circ f=g$. 
If $n>1$, then Theorem~\ref{theorem1} if $n=2$
or Theorem~\ref{theorem5} if $n\ge 3$ lead to the same conclusion
(recall that $H(\kappa)=V_{\kappa}$ if $\kappa$ is strongly inaccessible).
In each case, we replace, if necessary, $X$ by an isomorphic object within $\Se\cap H(\kappa)$,
so we may assume that $X\in\De$.

We therefore have a commutative triangle
\[
\xymatrix{ 
A\ar[rr]^{g}\ar[dr]_{f} &  & Y \\
& X \ar[ur]_{e} &
}
\]
where $f$ can also be viewed as 
a morphism from $\langle A,g\rangle$ to $\langle X,e\rangle$ in $(\Ce_{\kappa}\downarrow Y)$.
Since $(\Ce_{\kappa}\downarrow Y)$ is filtered, this tells us that
$(\De\downarrow Y)$ is cofinal in $(\Ce_{\kappa}\downarrow Y)$, as we wanted to show.
\end{proof}

\begin{corollary}
If there is a proper class of supercompact cardinals, then every accessible
category is co-wellpowered.
\end{corollary}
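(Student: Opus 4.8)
The plan is to deduce co-wellpoweredness from Theorem~\ref{theorem1}, applied to the class of epimorphisms out of a fixed object. Since co-wellpoweredness is invariant under equivalence of categories, by Theorem~\ref{characterization} I may assume that $\Ce$ is an accessible category of structures; then by Theorem~\ref{AdaRos} we have $\Ce={\bf Mod}\,T$ for a basic theory $T$ in a language $\Le_{\lambda}(\Sigma)$, and by Proposition~\ref{nonabsolute} the category $\Ce$ is {\boldmath$\Delta_2$}. Fix an object $Y\in\Ce$; the task is to show that $Y$ has only a set of quotients. Inside the coslice category $(Y\downarrow\Ce)$---which by Lemma~\ref{relativizing} is (isomorphic to) a full subcategory of a category of structures ${\bf Str}\,\Sigma''$, still {\boldmath$\Delta_2$}, now with $Y$ among the parameters---let $\Se$ be the full subcategory whose objects are the pairs $\langle Z,q\rangle$ for which $q$ is an epimorphism of~$\Ce$.

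The first step is to check that $\Se$ is {\boldmath$\Sigma_2$}. The statement ``$q$ is an epimorphism of $\Ce$'' is $\Pi_2$, being a universal quantification over the morphisms of the {\boldmath$\Delta_2$} category~$\Ce$; but it also admits a $\Sigma_2$ formulation using accessibility of~$\Ce$---there is a regular cardinal $\nu$ such that $\Ce$ is $\nu$-accessible, $Y$ and $Z$ are $\nu$-presentable, and $\Ce(q,W)$ is injective for every $\nu$-presentable~$W$---so that $\Se$ is in fact {\boldmath$\Delta_2$}. This complexity computation, in the spirit of Sections~\ref{Levyhierarchy} and~\ref{definablecategories}, is one of the two points where I expect the work to concentrate.

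With $\Se$ a {\boldmath$\Sigma_2$} full subcategory of ${\bf Str}\,\Sigma''$, I would invoke Theorem~\ref{theorem1} with a supercompact cardinal $\kappa$ larger than the ranks of $\Sigma''$, of the parameters, and of~$Y$, so that $Y\in V_{\kappa}=H(\kappa)$ and $\langle Y,\ident_Y\rangle\in V_{\kappa}$. Note that $\langle Y,\ident_Y\rangle\in\Se$, since $\ident_Y$ is an epimorphism, and that for every $\langle Z,q\rangle\in\Se$ the coslice morphism $\langle Y,\ident_Y\rangle\to\langle Z,q\rangle$ is $q$ itself. Part~(a) of Theorem~\ref{theorem1} therefore yields, for each $\langle Z,q\rangle\in\Se$, a factorization $\langle Y,\ident_Y\rangle\xrightarrow{p}\langle X,p\rangle\xrightarrow{e}\langle Z,q\rangle$ with $\langle X,p\rangle\in\Se\cap H(\kappa)$ and $e$ an elementary embedding of $\Sigma$-structures such that $ep=q$. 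Here $p\colon Y\to X$ is an epimorphism (because $\langle X,p\rangle\in\Se$), and $e$ is an epimorphism as well, since $q=ep$ is one; moreover $e$ realizes $X$ as an elementary substructure $e(X)\preceq Z$, with $e(X)\cong X\in\Ce$.

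The decisive step is that such an $e$ must be an isomorphism. If $e$ were not surjective, pick $z_0\in Z\setminus e(X)$ and pass to a sufficiently saturated elementary extension $Z^*\succeq Z$, which again lies in $\Ce$ since elementary extensions of models of $T$ are models of~$T$. Using that $e(X)\preceq Z$, a standard back-and-forth argument produces a homomorphism $g\colon Z\to Z^*$ agreeing with the inclusion on $e(X)$ but sending $z_0$ to a different realization of its type over $e(X)$; then $g$ and the inclusion are two distinct $\Ce$-morphisms out of $Z$ that equalize~$e$, contradicting that $e$ is an epimorphism. (Alternatively, one amalgamates $Z$ with itself over $e(X)$ inside ${\bf Str}\,\Sigma$ and checks that the amalgam is a model of~$T$.) Hence $e$ is surjective, and an elementary embedding that is surjective is an isomorphism, so $\langle Z,q\rangle\cong\langle X,p\rangle$ in $(Y\downarrow\Ce)$. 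Consequently every quotient of $Y$ is isomorphic, in $(Y\downarrow\Ce)$, to some $\langle X,p\rangle$ with $X\in\Ce\cap H(\kappa)$ and $p\in\Ce(Y,X)$ an epimorphism, and these form a set; thus $Y$ has only a set of quotients, and since $Y$ was arbitrary, $\Ce$ is co-wellpowered. The two places needing care are the {\boldmath$\Sigma_2$}-bound on~$\Se$ and the argument that $e$ is an isomorphism.
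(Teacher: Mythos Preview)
Your proposal has a genuine gap at the ``$e$ is an isomorphism'' step, and the $\boldsymbol{\Sigma_2}$ bound on $\Se$ is also not secured.

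On the isomorphism step: you need that an elementary embedding $e\colon X\to Z$ of $\Sigma$-structures which is an epimorphism in $\Ce={\bf Mod}\,T$ is surjective. Your back-and-forth sketch does not deliver this. Back-and-forth produces \emph{elementary} maps, not arbitrary homomorphisms, and the morphisms of $\Ce$ are all homomorphisms; there is no reason a realization of $\operatorname{tp}(z_0/e(X))$ in a saturated $Z^*$ extends to a \emph{homomorphism} $Z\to Z^*$ fixing $e(X)$. The amalgamation alternative is no better: you would need the pushout $Z\amalg_X Z$ in ${\bf Str}\,\Sigma$ to lie in ${\bf Mod}\,T$, but basic theories in $\Le_\lambda(\Sigma)$ are not in general closed under such pushouts. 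Non-surjective epimorphisms abound in accessible categories, and nothing in your argument rules out that $e$ is one of them.

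On the complexity bound: your $\Sigma_2$ reformulation of ``$q$ is an epimorphism'' quantifies over $\nu$-presentable $W\in\Ce$. But ``$W$ is $\nu$-presentable in $\Ce$'' is itself a statement of high complexity (it quantifies over all $\nu$-filtered diagrams and their colimits), and even after replacing it by a cardinality bound via Lemma~\ref{MakkaiPare} you must still assert $W\in\Ce$, which is $\Delta_2$; the resulting formula is not visibly $\Pi_1$ inside the leading $\exists\nu$. The paper in fact computes $\Ee_A$ only as $\boldsymbol{\Pi_2}$ and explicitly notes that this would naively require extendible cardinals.

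The paper's proof sidesteps both problems. It never tries to show that the elementary embedding is an isomorphism. Instead it exploits the fact that $\Ee_A$ is essentially a partially ordered class closed under colimits in $(A\downarrow\Ce)$, so that \emph{boundedness} (existence of a small dense subcategory) already forces essential smallness. To get boundedness from supercompactness rather than extendibility, the paper does not invoke Theorem~\ref{maintheorem} as a black box but reruns the argument of Theorem~\ref{theorem1} by hand, using one extra observation: since $\Ce$ is absolute between $V$ and the target model $M$ of the supercompact embedding (Proposition~\ref{nonabsolute}(b)), any pair of morphisms in $M$ witnessing failure of epi-ness would also witness it in $V$, so $g$ remains an epimorphism in $M$. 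This absoluteness of ``epimorphism'' is what lets supercompact cardinals suffice despite $\Ee_A$ being only $\boldsymbol{\Pi_2}$.
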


\begin{proof}
Let $\Ce$ be an accessible category. Since accessibility and co-well\-power\-edness are invariant under equivalence of categories,
we can assume that $\Ce$ is a category of models of a basic theory $T$ for some signature~$\Sigma$, by Theorem~\ref{characterization} and Theorem~\ref{AdaRos}.

For an object $A\in\Ce$, let
$\Ee_A$ be the full subcategory of $(A\downarrow \Ce)$ whose objects are the
epimorphisms. Then $\Ee_A$ is a partially ordered class, since between any two of its objects there is at most one morphism. Moreover, $\Ee_A$ is closed under colimits in $(A\downarrow \Ce)$ and, if a diagram $D\colon \Ke\to\Ee_A$ has a colimit, then the colimit is a supremum of the set $\{Dk:k\in\Ke\}$, hence determined by this set up to isomorphism. Therefore, in order to prove that $\Ce$ is co\nobreakdash-well\-powered, it is enough to prove that $\Ee_A$ is bounded for every~$A$, since this implies that $\Ee_A$ is essentially small.

From the fact that $\Ce$ is {\boldmath{$\Delta_2$}} it follows that $\Ee_A$ is {\boldmath{$\Pi_2$}}, since an object of $\Ee_A$ is a pair $\langle Y,g\rangle$ where $g\in\Ce(A,Y)$ and
\[
\begin{array}{c}
\forall Z\,\forall h\,\forall h'\, 
[(h\in\Ce(Y,Z)\,\wedge\, h'\in\Ce(Y,Z)\,\wedge\, h\circ g=h'\circ g)\,\to\, h=h'],
\end{array}
\]
and a morphism $\langle Y,g\rangle \to \langle Y',g'\rangle$ is a morphism $d\in\Ce(Y,Y')$ with $g'=d\circ g$.
Hence, Theorem~\ref{maintheorem} implies that $\Ee_A$ is bounded under the assumption that there are arbitrarily large \emph{extendible} cardinals.

However, as we next show, it is enough to assume that there are arbitrarily large \emph{supercompact} cardinals. For this, we need to repeat the argument used in the proof of Theorem~\ref{maintheorem} and the one used in the proof of Theorem~\ref{theorem1}, adapted to our current situation.

If $\Ce$ is accessible, then $(A\downarrow\Ce)$ is also accessible, by \cite[Corollary~2.44]{AR}. Pick a regular cardinal $\lambda$ such that $(A\downarrow\Ce)$ is $\lambda$\nobreakdash-accessible.
Assuming that there exists a proper class of supercompact cardinals, we may choose a supercompact cardinal $\kappa$ bigger than $\lambda$, such that $\Sigma,T\in H(\kappa)$ and such that all $\lambda$\nobreakdash-presentable objects of $(A\downarrow\Ce)$ are in $H(\kappa)$. Since $\kappa$ is strongly inaccessible, it is sharply bigger than $\lambda$ and therefore $(A\downarrow\Ce)$ is $\kappa$\nobreakdash-accessible.

Choose a full subcategory $\De$ of $\Ee_A$ containing one representative of each isomorphism class of objects in $\Ee_A\cap H(\kappa)$. By Lemma~\ref{MakkaiPare}, all objects in $\De$ are $\kappa$\nobreakdash-presentable. Choose also a set $(A\downarrow\Ce)_{\kappa}$ of representatives of all isomorphism classes of $\kappa$\nobreakdash-presentable objects of $(A\downarrow\Ce)$, containing $\De$ and such that all its objects are in $H(\kappa)$, which is possible by Lemma~\ref{MakkaiPare}.

Now let $\langle Y,g\rangle$ be any object of $\Ee_A$, so $g\colon A\to Y$ is an epimorphism. We know that $\langle Y,g\rangle$ is a colimit of the canonical diagram
\[
((A\downarrow\Ce)_{\kappa}\downarrow \langle Y,g\rangle)\longrightarrow (A\downarrow\Ce).
\]
Hence it suffices to prove that $(\De\downarrow \langle Y,g\rangle)$ is cofinal in 
$((A\downarrow\Ce)_{\kappa}\downarrow \langle Y,g\rangle)$. For this, pick any object in $((A\downarrow\Ce)_{\kappa}\downarrow \langle Y,g\rangle)$, which consists of a $\kappa$\nobreakdash-pres\-ent\-able object $\langle B,a\rangle$ of $(A\downarrow\Ce)$ together with a morphism $d\colon B\to Y$ such that $d\circ a=g$. Pick a cardinal $\mu>\kappa$ such that $\langle Y,g\rangle\in H(\mu)$. Then $d$ is also in~$H(\mu)$ since $B\in H(\kappa)$.

Let $j\colon V\to M$ be an elementary embedding with $M$ transitive and critical point~$\kappa$, such that $j(\kappa)>\mu$ and $M$ is closed under $\mu$\nobreakdash-sequences. Then $g$ and $d$ are in $M$ since $H(\mu)\in M$.
Moreover, $\Ce$ is absolute between $M$ and~$V$, by part~(b) of Proposition~\ref{nonabsolute}. Therefore $g$ is also an epimorphism in~$M$,
since, if $h,h'\in\Ce(Y,Z)$ satisfy $h\circ g=h'\circ g$ in~$M$, then $h$ and $h'$ also belong to $\Ce(Y,Z)$ in $V$ and therefore $h=h'$, since $g$ is an epimorphism in~$V$.

Since $Y\in H(\mu)$, the restriction $j\restriction Y:Y\to j(Y)$ is in~$M$, and it is an elementary embedding of $\Sigma$\nobreakdash-structures by Theorem~\ref{lemma0}.
Since $A$ and $B$ are in $H(\kappa)$, we have $j(A)=A$ and $j(B)=B$. Therefore, as in the proof of Theorem~\ref{theorem1}, $g\colon A\to Y$ and $d\colon B\to Y$ witness that in $M$ there exists an object $X$ (namely, $Y$) and an epimorphism $f\in\Ce(A,X)$ with $\text{rank}(X)<j(\kappa)$, together with an elementary embedding $e\colon X\to j(Y)$ such that $e\circ f=j(g)$ and a morphism $c\in\Ce(B,X)$ such that $c\circ a=f$ and $e\circ c=j(d)$. This implies, by elementarity of~$j$, that in $V$ there is an epimorphism $f\in\Ce(A,X)$ with $\text{rank}(X)<\kappa$, together with an elementary embedding $e\colon X\to Y$ such that $e\circ f=g$ and a morphism $c\in\Ce(B,X)$ such that $c\circ a=f$ and $e\circ c=d$. In other words, there is a commutative diagram
\[
\xymatrix{ 
& A \ar[dl]_a \ar[d]^f \ar[dr]^g & \\
B \ar@/_1.5pc/[rr]_d \ar[r]_c & X \ar[r]_e  & Y.
}
\]

Here we may replace $\langle X,f\rangle$ by an isomorphic object which is in $\De$.
This shows that $(\De\downarrow \langle Y,g\rangle)$ is cofinal in 
$((A\downarrow\Ce)_{\kappa}\downarrow \langle Y,g\rangle)$, and consequently the category $\Ee_A$ is bounded, as needed.
\end{proof}

On the other hand, as shown in~\cite[A.19]{AR}, if each accessible category is co-wellpowered then 
there exists a proper class of measurable cardinals. Therefore, the statement that 
every accessible category is co-wellpowered is set-theoretical. Its precise consistency strength is not known; 
see~\cite[Open Problem~11]{AR}. By part~(i) of \cite[Theorem~6.3.8]{MP}, together with the fact that
categories of epimorphisms can be sketched by a pushout sketch (as done in~\cite[p.\,101]{AR}), the statement
that every accessible category is co-wellpowered is implied by
the existence of a proper class of strongly compact cardinals,
a large-cardinal assumption that is not known to be 
weaker, consistency-wise, than the existence of a proper class of supercompact cardinals. 

In order to simplify the statements of several corollaries of Theorem~\ref{maintheorem},
we shall use the following terminology. 

\begin{definition}
{\rm
We say that a class $\Se$ is \emph{definable with sufficiently low complexity}
if any of the following conditions is satisfied:
\begin{itemize}
\item[{\rm (1)}]
$\Se$ is {\boldmath$\Sigma_1$}.
\item[{\rm (2)}]
There is a proper class of supercompact cardinals and $\Se$ is~{\boldmath$\Sigma_2$}.
\item[{\rm (3)}]
There is a proper class of $\Cn$\nobreakdash-extendible cardinals for some $n\ge 1$ and $\Se$ 
is~{\boldmath$\Sigma_{n+2}$}.
\end{itemize}
}
\end{definition}

By Corollary~\ref{corollary2bis}, 
if Vop\v{e}nka's principle holds, then all classes
are definable with sufficiently low complexity.

\section{Small-orthogonality classes}
\label{smallorthogonalityclasses}

An object $X$ and a morphism $f\colon A\to B$ in a category $\Ce$
are called \emph{orthogonal} \cite{FK} if the function
\[
\Ce(f,X)\colon\Ce(B,X)\longrightarrow\Ce(A,X)
\]
is bijective. That is, $X$ and $f$ are orthogonal if and only if for every
morphism $g\colon A\to X$ there is a unique morphism $h\colon B\to X$ such that
$h\circ f=g$. 

For a class of objects~$\Xe$, we denote by
${}^{\perp}\Xe$ the class of morphisms that are orthogonal
to all the objects of~$\Xe$. Similarly, for a class of 
morphisms~$\Fe$, we denote by $\Fe^{\perp}$ the class of objects
that are orthogonal to all the morphisms of~$\Fe$.
Classes of objects of the form $\Fe^{\perp}$ are called
\emph{orthogonality classes}, and, if $\Fe$ is a set
(not a proper class), then $\Fe^{\perp}$ is a \emph{small-orthogonality class}.

In what follows, we view each class of morphisms in $\Ce$
as a full subcategory of the category of arrows~$\Ar\Ce$. 

\begin{lemma}
\label{dense}
For a regular cardinal~$\lambda$,
let $\Fe$ be a class of morphisms in a $\lambda$\nobreakdash-accessible 
category~$\Ce$, and let $\De\subseteq\Fe$.
Suppose that every $f\in\Fe$ is a $\lambda$\nobreakdash-filtered colimit
of elements of~$\De$, and suppose that the inclusion of $\Fe$ into $\Ar\Ce$ preserves the colimit.
Then $\De^{\perp}=\Fe^{\perp}$.
\end{lemma}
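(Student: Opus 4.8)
The plan is as follows. Since $\De\subseteq\Fe$, every object orthogonal to all morphisms of $\Fe$ is in particular orthogonal to all morphisms of $\De$, so the inclusion $\Fe^{\perp}\subseteq\De^{\perp}$ holds trivially. The content of the lemma is the reverse inclusion $\De^{\perp}\subseteq\Fe^{\perp}$: given an object $X$ orthogonal to every morphism in $\De$, one must show that $X$ is orthogonal to every $f\in\Fe$. The underlying principle is that, for a fixed object $X$, the class of morphisms to which $X$ is orthogonal is closed under colimits formed in the arrow category.

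Fix $f\in\Fe$ and, by hypothesis, write $f=\colim_{k\in\Ke}d_k$ as a $\lambda$\nobreakdash-filtered colimit of morphisms $d_k\in\De$; since the inclusion $\Fe\hookrightarrow\Ar\Ce$ preserves this colimit, we may compute it in $\Ar\Ce$, where colimits are formed componentwise in the domain and in the codomain. Thus, writing $d_k\colon A_k\to B_k$, the morphism $f$ is the map $A\to B$ induced on colimits, with $A=\colim_k A_k$ and $B=\colim_k B_k$, the family $(d_k)$ being a natural transformation between the diagrams $k\mapsto A_k$ and $k\mapsto B_k$ whose colimit is $f$. Now apply the contravariant hom-functor $\Ce(-,X)\colon\Ce^{{\rm op}}\to\Sets$, which carries colimits in $\Ce$ to limits in $\Sets$: the maps $\Ce(d_k,X)\colon\Ce(B_k,X)\to\Ce(A_k,X)$ form a natural transformation between two $\Ke^{{\rm op}}$\nobreakdash-indexed diagrams in $\Sets$ whose limits are $\Ce(B,X)$ and $\Ce(A,X)$, respectively, and the induced map $\lim_k\Ce(B_k,X)\to\lim_k\Ce(A_k,X)$ between these limits is precisely $\Ce(f,X)$. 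Since $X$ is orthogonal to each $d_k$, every component $\Ce(d_k,X)$ is a bijection; and a natural transformation of diagrams which is pointwise a bijection induces a bijection between the limits, so $\Ce(f,X)$ is a bijection. Hence $X$ is orthogonal to $f$, and since $f\in\Fe$ was arbitrary, $X\in\Fe^{\perp}$.

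The only point demanding care is the bookkeeping that identifies $\Ce(f,X)$ with the map induced on limits by $\bigl(\Ce(d_k,X)\bigr)_k$, i.e., the compatibility of the canonical isomorphisms $\Ce(B,X)\cong\lim_k\Ce(B_k,X)$ and $\Ce(A,X)\cong\lim_k\Ce(A_k,X)$ with $\Ce(f,X)$ and with $\lim_k\Ce(d_k,X)$; this is a routine diagram chase from the universal properties of $A$ and $B$ together with the fact that $f$ is the colimit of the $d_k$ in $\Ar\Ce$. Equivalently, one can argue by hand: from a morphism $g\colon A\to X$ one forms the composites $A_k\to A\to X$ along $g$, obtains from $X\perp d_k$ a unique $h_k\colon B_k\to X$ with $h_k$ compatible along $d_k$ for each~$k$, observes that the $h_k$ form a cocone over $(B_k)$ and hence factor uniquely through $B=\colim_k B_k$ as a morphism $h\colon B\to X$ with $h\circ f=g$, and checks uniqueness of $h$ by the same reasoning. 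I anticipate no genuine obstacle; note in particular that neither the filteredness of $\Ke$ nor the accessibility of $\Ce$ is used for this direction — only the stated colimit presentation of the morphisms of~$\Fe$.
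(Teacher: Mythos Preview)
Your argument follows the same route as the paper's: reduce to showing $\De^{\perp}\subseteq\Fe^{\perp}$, present $f\in\Fe$ as a colimit of $d_k\in\De$ in $\Ar\Ce$, identify the domain and codomain of $f$ with $\colim_k A_k$ and $\colim_k B_k$, and then use that $\Ce(-,X)$ turns these colimits into limits, so a levelwise bijection becomes a bijection on the limit. This is exactly what the paper does.

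There is, however, one genuine slip. You assert that colimits in $\Ar\Ce$ are computed componentwise, and you conclude that ``neither the filteredness of $\Ke$ nor the accessibility of $\Ce$ is used''. That is not correct. Knowing that $f$ is a colimit of the $d_k$ in $\Ar\Ce$ does \emph{not}, in general, force $A=\colim_k A_k$ and $B=\colim_k B_k$ in $\Ce$: the domain and codomain functors $\Ar\Ce\to\Ce$ need not preserve colimits when $\Ce$ lacks the relevant colimits. (A small example: take $\Ce$ with objects $A_1,A_2,B,C$, arrows $d_i\colon A_i\to B$ and $\gamma_i\colon A_i\to C$ and nothing else; then $\ident_B$ is the coproduct of $d_1,d_2$ in $\Ar\Ce$, but $B$ is not the coproduct of $A_1,A_2$ in $\Ce$, since there is no map $B\to C$.) The paper uses precisely the $\lambda$\nobreakdash-accessibility of $\Ce$ together with the $\lambda$\nobreakdash-filteredness of $\Ke$ to guarantee that $\colim_k A_k$ and $\colim_k B_k$ exist in $\Ce$; once they do, the induced arrow between them is a colimit in $\Ar\Ce$, hence isomorphic to $f$, and your chain of bijections goes through. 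So keep your argument, but reinstate the one line you dropped: the componentwise colimits exist because $\Ce$ has $\lambda$\nobreakdash-filtered colimits.
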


\begin{proof}
To prove this claim, only the inclusion
$\De^{\perp}\subseteq \Fe^{\perp}$ needs to be checked.
Let $X\in\De^{\perp}$ and let $f\colon A\to B$ be any element of~$\Fe$.
By assumption, $f=\colim\,d_k$ where $d_k\colon A_k\to B_k$
is in $\De$ for all~$k\in\Ke$, and $\Ke$ is $\lambda$\nobreakdash-filtered.
Since $\Ce$ is $\lambda$\nobreakdash-accessible, the colimits
$\colim\,A_k$ and $\colim\,B_k$ exist, and the induced
arrow $g\colon\colim\,A_k\to\colim\,B_k$ is a colimit
of the arrows $d_k$ in~$\Ar\Ce$. Since $f$ is also a colimit of
the same diagram, we infer that $g\cong f$. Hence, $f$ induces bijections
\begin{align*}
\Ce(B,X) & \cong \Ce(\colim\,B_k, X)\cong \lim\,\Ce(B_k,X) \\
& \cong \lim\,\Ce(A_k,X) \cong \Ce(\colim\,A_k,X) \cong \Ce(A,X),
\end{align*}
which means that $X\in\Fe^{\perp}$, as needed.
\end{proof}

\begin{lemma}
\label{ortocount}
If $\Se$ is a {\boldmath$\Sigma_{n+1}$} full subcategory of a 
{\boldmath$\Sigma_{n}$} category~$\Ce$, then
${}^{\perp}\Se$ is {\boldmath$\Pi_{n+1}$} if $n\ge 1$,
and it is {\boldmath$\Pi_2$} if $n=0$.
\end{lemma}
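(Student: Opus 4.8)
The plan is to write down an explicit formula defining ${}^{\perp}\Se$, regarded as a full subcategory of $\Ar\Ce$, and then read off its Lévy complexity from the standard closure properties of the classes $\Sigma_m$ and $\Pi_m$: for $m\ge 1$ both are closed under finite conjunctions, finite disjunctions and bounded quantification, one has $\Sigma_m\subseteq\Pi_{m+1}$ and $\Pi_m\subseteq\Sigma_{m+1}$, prefixing an unbounded $\exists$ preserves $\Sigma_m$ for $m\ge 1$ but turns a $\Sigma_0$ formula into a $\Sigma_1$ formula, and prefixing an unbounded $\forall$ preserves $\Pi_m$ and sends $\Sigma_m$ into $\Pi_{m+1}$.

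First I would fix a $\Sigma_n$ formula $\varphi(x,y,z,f,g,h,i)$ witnessing that $\Ce$ is {\boldmath$\Sigma_n$} (suppressing the parameter of $\Ce$) and a $\Sigma_{n+1}$ formula $\sigma(x)$ defining the objects of $\Se$ (suppressing the parameter of $\Se$). As in the discussion after Definition~\ref{definablecategory}, $\psi_{\rm Mor}(x,y,z):=\exists i\,\varphi(x,x,y,i,z,z,i)$ expresses ``$z\in\Ce(x,y)$'', and I would set $\theta(x,y,z,f,h,g):=\exists i\,\varphi(x,y,z,f,h,g,i)$, which expresses ``$f\colon x\to y$ and $h\colon y\to z$ are morphisms of $\Ce$ and $g=h\circ f$''. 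Because of the unbounded quantifier over $i$, both of these are {\boldmath$\Sigma_m$} formulas with $m:=\max(n,1)$; note that $m+1$ equals $n+1$ when $n\ge 1$ and equals $2$ when $n=0$, which is exactly the bound asserted in the lemma.

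Next I would express, for a triple $\langle A,B,f\rangle$, the statement that $f$ is orthogonal to an object $X$, i.e. that $\alpha\mapsto\alpha\circ f$ is a bijection $\Ce(B,X)\to\Ce(A,X)$, as the conjunction of a surjectivity clause
\[
\forall g\,\bigl(\psi_{\rm Mor}(A,X,g)\to\exists h\,\theta(A,B,X,f,h,g)\bigr)
\]
and an injectivity clause
\[
\forall g\,\forall h\,\forall h'\,\bigl((\theta(A,B,X,f,h,g)\wedge\theta(A,B,X,f,h',g))\to h=h'\bigr).
\]
The injectivity clause is a purely universal formula whose matrix is a disjunction of {\boldmath$\Pi_m$} formulas, hence {\boldmath$\Pi_m$}. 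The surjectivity clause is $\forall g$ applied to the disjunction of the {\boldmath$\Pi_m$} formula $\neg\psi_{\rm Mor}(A,X,g)$ with the {\boldmath$\Sigma_m$} formula $\exists h\,\theta(A,B,X,f,h,g)$; such a disjunction lies in {\boldmath$\Pi_{m+1}$}, and the leading $\forall g$ keeps it there, so the surjectivity clause is {\boldmath$\Pi_{m+1}$}. Hence ``$f$ is orthogonal to $X$'', call it $\mathrm{Orth}(A,B,f,X)$, is {\boldmath$\Pi_{m+1}$}. The only point that requires genuine care is precisely this splitting of ``$\exists! h$'' into a $\Sigma$-friendly existence clause and a $\Pi$-friendly uniqueness clause \emph{before} pushing $\forall g$ through; keeping them merged would instead push the complexity up to {\boldmath$\Pi_{m+2}$}.

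Finally, ``$\langle A,B,f\rangle$ is an object of ${}^{\perp}\Se$'' is
\[
\psi_{\rm Mor}(A,B,f)\;\wedge\;\forall X\,\bigl(\neg\sigma(X)\vee\mathrm{Orth}(A,B,f,X)\bigr).
\]
Since $\neg\sigma$ is {\boldmath$\Pi_{n+1}$} and $n+1\le m+1$, both disjuncts lie in {\boldmath$\Pi_{m+1}$}, so the disjunction and then its universal closure over $X$ remain {\boldmath$\Pi_{m+1}$}; conjoining $\psi_{\rm Mor}(A,B,f)$, which is {\boldmath$\Sigma_m$} and hence {\boldmath$\Pi_{m+1}$}, does not raise the complexity. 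The morphisms, composition and identities of ${}^{\perp}\Se$, as a full subcategory of $\Ar\Ce$, are defined by conjoining the {\boldmath$\Sigma_m$}-definable structure of $\Ar\Ce$ (read off from $\psi_{\rm Mor}$ and $\theta$) with the object-membership condition above, so they are {\boldmath$\Pi_{m+1}$} as well. Thus ${}^{\perp}\Se$ is {\boldmath$\Pi_{m+1}$}, which is {\boldmath$\Pi_{n+1}$} if $n\ge 1$ and {\boldmath$\Pi_2$} if $n=0$.
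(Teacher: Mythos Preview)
Your proof is correct and follows essentially the same approach as the paper: write down the defining formula for ${}^{\perp}\Se$ split into an existence clause and a uniqueness clause, then read off the L\'evy complexity. Your treatment is in fact more careful than the paper's terse argument, particularly in isolating $m=\max(n,1)$ to handle the $n=0$ case uniformly and in explaining why the $\exists!$ must be split before pushing the outer universal quantifier through.
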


\begin{proof}
The class of morphisms ${}^{\perp}\Se$ can be defined as follows: 
$\langle A,B,f\rangle\in {}^{\perp}\Se$ if and only if
\begin{equation}
\label{orthogonality}
\begin{array}{c}
\forall X \, \forall g \, [(X\in\Se\, \wedge\, g\in\Ce(A,X))
\to \exists h\,(h\in\Ce(B,X)\,\wedge\, h\circ f = g)] \\[0.1cm]
\wedge \, \forall X\,\forall h_1\,\forall h_2\, [(X\in\Se\,\wedge\, h_1\in\Ce(B,X)\,\wedge\,h_2\in\Ce(B,X) \\[0.1cm] 
\wedge\,h_1\circ f=h_2\circ f)\to h_1=h_2].
\end{array}
\end{equation}
Recall that $P\to Q$ means $\neg(P\wedge\neg Q)$, or $\neg P\vee Q$.
Therefore, \eqref{orthogonality} is at least~$\Pi_2$,
and it is $\Pi_{n+1}$ if $\Se$ is~{\boldmath$\Sigma_{n+1}$} 
and $\Ce$ is at most {\boldmath$\Sigma_{n}$} with $n\ge 1$.
\end{proof}

\begin{theorem}
\label{theorem6.3}
Assume the existence of a proper class of $\Cn$\nobreakdash-extendible cardinals,
where $n\ge 2$. Then each {\boldmath$\Sigma_{n+1}$}
orthogonality class in an accessible category $\Ce$ 
of structures is a small-orthogonality class.
\end{theorem}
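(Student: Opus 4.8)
The plan is to take the given orthogonality class $\Xe=\Fe^{\perp}$, replace $\Fe$ by the largest class of morphisms yielding the same orthogonal class, bound the complexity of that class using Lemma~\ref{ortocount}, pass to a small dense subcategory of it by Theorem~\ref{maintheorem}, and conclude with Lemma~\ref{dense}. The first step uses only the standard properties of the Galois connection determined by orthogonality: for any class $\Fe$ of morphisms one has $\Fe\subseteq {}^{\perp}(\Fe^{\perp})$, hence $({}^{\perp}(\Fe^{\perp}))^{\perp}\subseteq\Fe^{\perp}$, and the reverse inclusion is immediate, so $({}^{\perp}(\Fe^{\perp}))^{\perp}=\Fe^{\perp}$. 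Thus, writing $\Xe=\Fe^{\perp}$ and $\Ge_0={}^{\perp}\Xe$, we have $\Ge_0^{\perp}=\Xe$, and there is no loss of generality in assuming $\Fe=\Ge_0={}^{\perp}\Xe$.

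Next I would estimate the complexity of $\Ge_0$, regarded as a full subcategory of $\Ar\Ce$. Since $\Ce$ is an accessible category of structures, it is {\boldmath$\Delta_2$} by Proposition~\ref{nonabsolute} and Theorem~\ref{AdaRos}, hence {\boldmath$\Sigma_n$} because $n\ge 2$. As $\Xe$ is {\boldmath$\Sigma_{n+1}$}, Lemma~\ref{ortocount} with $\Se=\Xe$ shows that $\Ge_0={}^{\perp}\Xe$ is {\boldmath$\Pi_{n+1}$}, and therefore {\boldmath$\Sigma_{n+2}$} after prepending a dummy existential quantifier. By Lemma~\ref{relativizing} there is a signature $\Sigma'$ for which $\Ar{\bf Str}\,\Sigma$ fully embeds into ${\bf Str}\,\Sigma'$ preserving complexity, $\Sigma$ being a signature with $\Ce$ a full, accessibly embedded, $\lambda$\nobreakdash-accessible subcategory of ${\bf Str}\,\Sigma$. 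Because $\lambda$\nobreakdash-filtered colimits in $\Ar{\bf Str}\,\Sigma$ are computed componentwise, $\Ar\Ce$ is closed under $\lambda$\nobreakdash-filtered colimits in $\Ar{\bf Str}\,\Sigma$, and, the arrow category of an accessible category being accessible, $\Ar\Ce$ is itself an accessible category of structures via the above embedding, inside which $\Ge_0$ is a {\boldmath$\Sigma_{n+2}$} full subcategory.

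Then I would apply Theorem~\ref{maintheorem} to the accessible category of structures $\Ar\Ce$ and its full subcategory $\Ge_0$, with the role of ``$n$'' there played by $n+2$. Since $n\ge 2$ gives $n+2\ge 3$, the hypothesis required by Theorem~\ref{maintheorem} is a proper class of $C((n+2)-2)=\Cn$\nobreakdash-extendible cardinals, which is precisely our assumption. We thus obtain a dense small full subcategory $\De\subseteq\Ge_0$ and arbitrarily large regular cardinals $\kappa$ such that, for every $f\in\Ge_0$, the category $(\De\downarrow f)$ is $\kappa$\nobreakdash-filtered and $f$ is a colimit of both canonical diagrams $(\De\downarrow f)\to\Ge_0$ and $(\De\downarrow f)\to\Ar\Ce$. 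Choosing such a $\kappa$ with $\kappa\ge\lambda$, each $f\in\Ge_0$ is exhibited as a $\lambda$\nobreakdash-filtered colimit of objects of $\De$ whose colimit cocone is preserved by the inclusion $\Ge_0\hookrightarrow\Ar\Ce$, so Lemma~\ref{dense} gives $\De^{\perp}=\Ge_0^{\perp}=\Xe$. Since $\De$ is small, its objects form a set $\Ge$ of morphisms of $\Ce$ with $\Ge^{\perp}=\De^{\perp}=\Xe$, and hence $\Xe$ is a small-orthogonality class.

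I expect the main obstacle to be the complexity bookkeeping that places ${}^{\perp}\Xe$ at level {\boldmath$\Pi_{n+1}$} (hence {\boldmath$\Sigma_{n+2}$}), together with checking that $\Ar\Ce$ legitimately counts as an accessible category of structures so that Theorem~\ref{maintheorem} applies to it. Once this is settled, the large-cardinal hypothesis of $\Cn$\nobreakdash-extendibility is exactly the one Theorem~\ref{maintheorem} demands for a {\boldmath$\Sigma_{n+2}$} subcategory, and the remaining steps are a routine combination of Lemmas~\ref{relativizing}, \ref{ortocount}, and~\ref{dense}.
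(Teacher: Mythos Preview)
Your proposal is correct and follows essentially the same route as the paper: replace $\Fe$ by ${}^{\perp}\Xe$ via the Galois connection, use Proposition~\ref{nonabsolute} and Lemma~\ref{ortocount} to bound ${}^{\perp}\Xe$ as {\boldmath$\Pi_{n+1}$} (hence {\boldmath$\Sigma_{n+2}$}), embed $\Ar\Ce$ as an accessible category of structures via Lemma~\ref{relativizing}, invoke Theorem~\ref{maintheorem} under the $\Cn$\nobreakdash-extendibility hypothesis to get a small dense $\De$, and finish with Lemma~\ref{dense}. Your write-up is in fact slightly more explicit than the paper's in justifying that $\Ce$ is {\boldmath$\Sigma_n$} (needed for Lemma~\ref{ortocount}) and that $\Ar\Ce$ is genuinely an accessible category of structures, but the argument is the same.
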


\begin{proof}
Let $\Se$ be a full subcategory of $\Ce$ whose objects form a 
{\boldmath$\Sigma_{n+1}$} orthogonality class. Thus $\Se=\Fe^{\perp}$ for some~$\Fe$, and
this implies that 
\[
({}^{\perp}\Se)^{\perp}=({}^{\perp}(\Fe^{\perp}))^{\perp}=\Fe^{\perp}=\Se.
\]

Since $\Ce$ is {\boldmath$\Delta_2$} by Proposition~\ref{nonabsolute}, we infer from
Lemma~\ref{ortocount} that ${}^{\perp}\Se$ is {\boldmath$\Pi_{n+1}$}.
Now the category of arrows $\Ar\Ce$ is accessible and embeds accessibly into a category
of structures in such a way that complexity is preserved, by Lemma~\ref{relativizing}.
Hence, by Theorem~\ref{maintheorem}, ${}^{\perp}\Se$ has a
dense small full subcategory $\De$ and there is a regular cardinal
$\kappa$ (which we may choose so that $\Ce$ is $\kappa$\nobreakdash-accessible)
such that every arrow $f\in {}^{\perp}\Se$ is a $\kappa$\nobreakdash-filtered
colimit of elements of~$\De$, both in ${}^{\perp}\Se$ and in~$\Ar\Ce$.
Then $\De^{\perp}=({}^{\perp}\Se)^{\perp}=\Se$
by Lemma~\ref{dense}, so $\Se$ is indeed a small-orthogonality class.
\end{proof}

This result can be sharpened as follows.
A~\emph{reflection} on a category is a left adjoint (when it exists) of the inclusion
of a full subcategory~\cite{Mac}, which is then called \emph{reflective}. 
For example, in the category of groups, 
the abelianization functor is a reflection onto the reflective full subcategory of abelian groups.
For every reflection~$L$, the closure under isomorphisms of its image
is an orthogonality class, and it is in fact orthogonal to the 
class of \emph{$L$\nobreakdash-equivalences}, i.e., morphisms $f$ such that $Lf$ is an isomorphism.

A~reflection $L$
is called an \emph{$\Fe$\nobreakdash-reflection}, where $\Fe$ is a set or a proper class of morphisms,
if the closure under isomorphisms of the image of $L$ is equal to~$\Fe^{\perp}$. This notion is
particularly relevant when $\Fe$ can be chosen to be a set (or even better a single morphism).
In the previous example, abelianization is an $f$\nobreakdash-reflection where $f$ is 
the canonical projection of a free group on two generators onto
a free abelian group on two generators, since the groups orthogonal to $f$ are precisely the abelian groups.

\begin{theorem}
\label{corollary3.4}
Let $L$ be a reflection on an accessible category $\Ce$ of structures.
Then $L$ is an $\Fe$\nobreakdash-reflection for some set $\Fe$ of morphisms
under any of the following assumptions:
\begin{itemize}
\item[(1)]
The class of $L$\nobreakdash-equivalences is definable with sufficiently low complexity.
\item[(2)]
The class of objects isomorphic to $LX$ for some $X$ is {\boldmath$\Sigma_{n+1}$}
for $n\ge 2$ and there is a proper class of $\Cn$\nobreakdash-extendible cardinals.
\end{itemize}
\end{theorem}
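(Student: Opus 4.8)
The plan is to deduce both cases from Theorem~\ref{maintheorem} together with Lemma~\ref{dense}, and from Theorem~\ref{theorem6.3}, after a preliminary translation into orthogonality. Write $\Se$ for the class of objects of $\Ce$ that are isomorphic to $LX$ for some~$X$; this is the closure under isomorphisms of the image of~$L$, and it is a reflective full subcategory. Let $\We$ be the class of $L$\nobreakdash-equivalences, viewed as a full subcategory of~$\Ar\Ce$. The first step is to record the standard identity $\Se=\We^{\perp}$. For one inclusion, if $Y\in\Se$ then the unit $\eta$ of $L$ induces natural bijections $\Ce(A,Y)\cong\Ce(LA,Y)$, so for an $L$\nobreakdash-equivalence $f\colon A\to B$ the map $\Ce(f,Y)$ becomes $\Ce(Lf,Y)$, which is bijective since $Lf$ is an isomorphism; hence $Y\in\We^{\perp}$. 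For the converse, $\eta_Y$ is itself an $L$\nobreakdash-equivalence (because $L\eta_Y$ is an isomorphism), so orthogonality of $Y$ to $\eta_Y$ produces a retraction $r$ of $\eta_Y$, and a further use of orthogonality of $LY$ to $\eta_Y$ forces $\eta_Y r=\ident$, whence $Y\cong LY\in\Se$. In particular $\Se$ is an orthogonality class.

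For assumption~(1), the point is that $\We$ is definable with sufficiently low complexity, and the large-cardinal hypothesis built into that phrase is exactly the one under which Theorem~\ref{maintheorem} applies to a {\boldmath$\Sigma_1$}, {\boldmath$\Sigma_2$} or {\boldmath$\Sigma_{n+2}$} full subcategory. So I would first invoke Lemma~\ref{relativizing} to embed $\Ar\Ce$ accessibly, and complexity-preservingly, into a category of structures, and then apply Theorem~\ref{maintheorem} to $\We$ regarded as a full subcategory there. This yields a dense small full subcategory $\De\subseteq\We$ and a regular cardinal $\kappa$, which — as in the proof of Theorem~\ref{theorem6.3} — we may choose so that $\Ce$, hence also $\Ar\Ce$, is $\kappa$\nobreakdash-accessible, such that every $f\in\We$ is a $\kappa$\nobreakdash-filtered colimit of objects of $\De$, both in $\We$ and in~$\Ar\Ce$. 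Lemma~\ref{dense} then gives $\De^{\perp}=\We^{\perp}=\Se$. Since $\De$ is a set of morphisms and $\Se$ is the closure under isomorphisms of the image of~$L$, we conclude that $L$ is a $\De$\nobreakdash-reflection.

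For assumption~(2), $\Se=\We^{\perp}$ is an orthogonality class in the accessible category of structures $\Ce$, it is assumed {\boldmath$\Sigma_{n+1}$} with $n\ge 2$, and a proper class of $\Cn$\nobreakdash-extendible cardinals is assumed to exist; this is exactly the hypothesis of Theorem~\ref{theorem6.3}, which therefore tells us directly that $\Se$ is a small-orthogonality class, say $\Se=\Fe^{\perp}$ for a set $\Fe$ of morphisms, so that $L$ is an $\Fe$\nobreakdash-reflection. I expect the only genuine work to be in the preliminary step: carefully establishing $\Se=\We^{\perp}$ and checking that $\Ar\Ce$ is accessible and that Lemma~\ref{relativizing} preserves the relevant complexity. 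After that, case~(1) is a direct repetition of the argument of Theorem~\ref{theorem6.3} with $\We$ in place of ${}^{\perp}\Se$, and case~(2) is immediate.
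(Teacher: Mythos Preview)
Your proposal is correct and follows essentially the same route as the paper: case~(1) applies Theorem~\ref{maintheorem} to the class of $L$\nobreakdash-equivalences inside $\Ar\Ce$ and then invokes Lemma~\ref{dense}, while case~(2) is a direct application of Theorem~\ref{theorem6.3}. You are simply more explicit than the paper about the preliminary identity $\Se=\We^{\perp}$ and about the accessible embedding of $\Ar\Ce$ via Lemma~\ref{relativizing}, both of which the paper leaves implicit.
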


\begin{proof}
To prove case~(1), let $\Se$ be the full subcategory of $L$\nobreakdash-equivalences
in the category of arrows of~$\Ce$.
It then follows from Theorem~\ref{maintheorem}
that there is a small full subcategory $\De$ of $\Se$ which is dense
and satisfies $\Se^{\perp}=\De^{\perp}$, by Lemma~\ref{dense}, as needed.
Case~(2) follows as a special case of Theorem~\ref{theorem6.3}.
\end{proof}

The following corollary is a stronger variant of
\cite[Corollary~4.6]{BCM}. The assumptions that $L$ be an epireflection and
that $\Ce$ be balanced, which were made in~\cite{BCM}, are not at all necessary here.

\begin{corollary}
\label{onemore}
Suppose that there is a proper class of supercompact cardinals.
If $L$ is a reflection on an accessible category $\Ce$ of structures
and the class of $L$\nobreakdash-equivalences is~{\boldmath$\Sigma_2$},
then $L$ is an $\Fe$\nobreakdash-reflection for some set $\Fe$ of morphisms.
\end{corollary}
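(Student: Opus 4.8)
The plan is to obtain this statement as an immediate specialization of Theorem~\ref{corollary3.4}. The key observation is that the two hypotheses here---that there is a proper class of supercompact cardinals and that the class of $L$\nobreakdash-equivalences is~{\boldmath$\Sigma_2$}---are exactly condition~(2) in the definition of a class that is \emph{definable with sufficiently low complexity}. Hence the class of $L$\nobreakdash-equivalences satisfies the hypothesis of case~(1) of Theorem~\ref{corollary3.4}, and the conclusion follows at once.

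For completeness I would unwind what Theorem~\ref{corollary3.4} does in this case. Let $\Se$ be the full subcategory of $\Ar\Ce$ whose objects are the $L$\nobreakdash-equivalences. By Lemma~\ref{relativizing}, $\Ar\Ce$ embeds accessibly into a category of structures in a complexity-preserving way, so $\Se$ is a {\boldmath$\Sigma_2$} full subcategory of an accessible category of structures. Applying Theorem~\ref{maintheorem} with $n=2$---this is the only place the proper class of supercompact cardinals is used---yields a dense small full subcategory $\De\subseteq\Se$ and a regular cardinal $\kappa$, with $\Ce$ (hence $\Ar\Ce$) $\kappa$\nobreakdash-accessible, such that every object of $\Se$ is a $\kappa$\nobreakdash-filtered colimit of objects of~$\De$, the colimit being preserved by the inclusion $\Se\hookrightarrow\Ar\Ce$. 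Lemma~\ref{dense} then gives $\De^{\perp}=\Se^{\perp}$.

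Finally, since $L$ is a reflection, the closure under isomorphisms of its image is the orthogonality class $\Se^{\perp}$ consisting of all objects orthogonal to every $L$\nobreakdash-equivalence; this is standard and is recorded in the paragraph preceding Theorem~\ref{corollary3.4}. Therefore the image of $L$ equals $\De^{\perp}$, and since $\De$ is small, $L$ is an $\Fe$\nobreakdash-reflection, where $\Fe$ is the set of morphisms underlying the objects of~$\De$. I do not expect any genuine obstacle: the large-cardinal content (producing small subobjects of objects of $\Se$) and the colimit bookkeeping in the category of arrows are already carried out in Theorem~\ref{maintheorem} and Lemma~\ref{dense}, and the only routine point to check is that the reflective subcategory determined by $L$ is the orthogonal class of the $L$\nobreakdash-equivalences.
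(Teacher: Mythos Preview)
Your proposal is correct and follows exactly the paper's approach: the paper's own proof simply notes that the hypotheses make the class of $L$\nobreakdash-equivalences definable with sufficiently low complexity and then invokes Theorem~\ref{corollary3.4}. Your additional unwinding of Theorem~\ref{corollary3.4} via Lemma~\ref{relativizing}, Theorem~\ref{maintheorem}, and Lemma~\ref{dense} is accurate and matches what those results do.
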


\begin{proof}
By assumption, the class of $L$\nobreakdash-equivalences is definable with 
sufficiently low complexity. Hence, Theorem~\ref{corollary3.4} applies.
\end{proof}

As already shown in \cite[Theorem~6.3]{CSS}, the assertion that every reflection on an
accessible category is an $\Fe$\nobreakdash-reflection for some set $\Fe$ of morphisms
cannot be proved in~ZFC. Specifically,
if one assumes that measurable cardinals do not exist and considers reflection
on the category of groups with respect to the class $\Ze$ of homomorphisms
of the form $\Z^{\kappa}/\Z^{<\kappa}\to\{0\}$, where $\kappa$ runs
over all cardinals (see Example~\ref{example2}), then there is no set $\Fe$ of group homomorphisms
such that $\Fe$\nobreakdash-reflection coincides with $\Ze$\nobreakdash-reflection. 
This fact was also used in~\cite{BCM}.

\begin{theorem}
\label{corollary6.4}
If $\Ce$ is a locally presentable category  
of structures, then every full subcategory $\Se$ of $\Ce$ closed under limits and
definable with sufficiently low complexity is reflective.
\end{theorem}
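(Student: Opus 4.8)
The plan is to exhibit $\Se$ as a small-orthogonality class $\Fe^{\perp}$ of $\Ce$ for some \emph{set} $\Fe$ of morphisms; reflectivity then follows from the classical fact that small-orthogonality classes in locally presentable categories are reflective (see \cite[Ch.\,1]{AR}).

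The first step is to observe that, since $\Se$ is closed under limits, it coincides with the orthogonality class of its own orthogonal complement, $\Se=({}^{\perp}\Se)^{\perp}$. The inclusion $\Se\subseteq({}^{\perp}\Se)^{\perp}$ is formal; for the converse one uses that $\Ce$, being locally presentable, is complete and well-powered, so that $\Se$ admits a weak reflection $Y\to R_Y$ for every object $Y$ of $\Ce$, through which every morphism from $Y$ into an object of $\Se$ factors. When $Y\notin\Se$ this morphism is not invertible and lies in ${}^{\perp}\Se$, so $Y\notin({}^{\perp}\Se)^{\perp}$. This step is carried out in ZFC.

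The second step is to bound the L\'evy complexity of ${}^{\perp}\Se$, regarded as a full subcategory of $\Ar\Ce$. By Lemma~\ref{relativizing}, $\Ar\Ce$ embeds fully and in a complexity-preserving way into a category of structures; since limits and colimits in $\Ar\Ce$ are formed componentwise, it is an accessible category of structures. By Proposition~\ref{nonabsolute} the ambient category $\Ce$ is at most {\boldmath$\Delta_2$}, so Lemma~\ref{ortocount} shows that if $\Se$ is {\boldmath$\Sigma_n$} with $n\ge 3$ then ${}^{\perp}\Se$ is {\boldmath$\Pi_n$}. Because a $C(n-2)$-extendible cardinal is $\Sigma_n$-correct by Lemma~\ref{extendiblecorrectness}, it reflects {\boldmath$\Pi_n$}-formulas no less than {\boldmath$\Sigma_n$}-formulas, so the argument proving Theorem~\ref{maintheorem} applies to the {\boldmath$\Pi_n$} class ${}^{\perp}\Se$ under the hypothesis of a proper class of $C(n-2)$-extendible cardinals --- precisely the hypothesis making a {\boldmath$\Sigma_n$} class definable with sufficiently low complexity. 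The borderline cases, $\Se$ being {\boldmath$\Sigma_1$} or {\boldmath$\Sigma_2$}, are handled separately: there one applies Theorem~\ref{maintheorem} directly to $\Se$, whose complexity matches the hypothesis verbatim, obtaining that $\Se$ is bounded, and then deduces reflectivity from standard facts --- a complete bounded category is locally presentable, and a limit-closed accessible full subcategory of a locally presentable category is reflective (cf.\ \cite[Ch.\,1--2]{AR}).

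Putting this together, Theorem~\ref{maintheorem} produces a dense small full subcategory $\De\subseteq{}^{\perp}\Se$ and a regular cardinal $\kappa$, which may be chosen so that $\Ce$ is $\kappa$-accessible, such that every $f\in{}^{\perp}\Se$ is a $\kappa$-filtered colimit of objects of $\De$ both in ${}^{\perp}\Se$ and in $\Ar\Ce$. Lemma~\ref{dense} then gives $\De^{\perp}=({}^{\perp}\Se)^{\perp}=\Se$, so $\Se$ is a small-orthogonality class and hence reflective. The step I expect to be the main obstacle is the complexity bookkeeping of the second step: the orthogonality condition \eqref{orthogonality} defining ${}^{\perp}\Se$ carries an extra alternation of quantifiers, over the objects of $\Se$ and over the morphisms of $\Ce$ (which is only {\boldmath$\Delta_2$}, not {\boldmath$\Delta_1$}, in general), and reconciling the resulting bound with the finely calibrated hypotheses of Theorem~\ref{maintheorem} is what forces the case distinction above. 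It is also worth isolating that what is genuinely used is closure of $\Se$ under \emph{limits}, not merely that $\Se$ is an orthogonality class for a proper class of morphisms --- this enters both in identifying $\Se$ with $({}^{\perp}\Se)^{\perp}$ and in the reflectivity argument invoked in the borderline cases.
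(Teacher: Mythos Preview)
Your approach takes a substantial detour and has a genuine gap at the very first step. You claim that $\Se=({}^{\perp}\Se)^{\perp}$ follows in ZFC from closure under limits, arguing that completeness and well-poweredness of $\Ce$ give a weak reflection $Y\to R_Y$. Neither part of this argument holds: completeness and well-poweredness alone do \emph{not} produce weak reflections (that would itself require a solution set, which is precisely what is at stake), and even if a weak reflection existed, the map $Y\to R_Y$ would give existence of factorizations through $\Se$ but not uniqueness, so there is no reason for it to lie in ${}^{\perp}\Se$. Without this equality the rest of your argument does not start.

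Even granting step one, the detour through ${}^{\perp}\Se$ costs you a level of complexity (Lemma~\ref{ortocount} turns a {\boldmath$\Sigma_n$} class into a {\boldmath$\Pi_n$} or {\boldmath$\Pi_{n+1}$} class), and your appeal to ``the argument proving Theorem~\ref{maintheorem} applies to {\boldmath$\Pi_n$} classes under $C(n-2)$-extendibles'' is not something that Theorem~\ref{maintheorem} or its proof actually establishes; it would need a separate verification through Theorems~\ref{theorem1} and~\ref{theorem4}. Your fallback for the low-complexity cases (``complete bounded implies locally presentable'') is also not a standard fact and would require proof.

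The paper's argument avoids all of this. It applies the factorization in part~(a) of Theorem~\ref{vpsigma1}/\ref{theorem1}/Corollary~\ref{theorem5} (equivalently, the cofinality argument in the proof of Theorem~\ref{maintheorem}) directly to $\Se$: for each $A\in\Ce$, every morphism $A\to Y$ with $Y\in\Se$ factors through some $X\in\Se\cap H(\kappa)$, which is a set. This is exactly the solution-set condition, so the Freyd Adjoint Functor Theorem applies, using only that $\Se$ is closed under limits. No orthogonality, no passage to $\Ar\Ce$, no complexity increment.
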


\begin{proof}
As in the proof of Theorem~\ref{maintheorem}, for every $A\in\Ce$
we can choose a small full subcategory $\De$ of $\Se$ (depending on the cardinality of $A$ and the parameters of $\Ce$)
such that every arrow $f\colon A \to Y$ 
with $Y$ in $\Se$ factors through some object $X\in\De$.
Hence the inclusion functor $\Se\hookrightarrow\Ce$ satisfies the
solution-set condition for every $A$ in~$\Ce$, 
as required in the Freyd Adjoint Functor Theorem \cite[V.6]{Mac},
from which the existence of a reflection of $\Ce$ onto $\Se$ follows.
\end{proof}

The following result is a further improvement, since it implies, among other things, 
that, if $\Se$ is~{\boldmath$\Sigma_1$},
then the reflectivity of $\Se^{\perp}$ is provable in ZFC.
This yields, in particular, a solution of the Freyd--Kelly orthogonal subcategory problem
\cite{FK} in ZFC for {\boldmath$\Sigma_1$} classes.

\begin{theorem}
\label{corollary6.5}
Let $\Se$ be a class of morphisms definable with sufficiently low complexity in an accessible category $\Ce$ 
of structures. Then $\Se^{\perp}$ is a small-orthogonality class and, if $\Ce$ is cocomplete,
then $\Se^{\perp}$ is reflective.
\end{theorem}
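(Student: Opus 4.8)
The plan is to adapt the argument in the proof of Theorem~\ref{theorem6.3}, applying it directly to the class of morphisms $\Se$ rather than to an orthogonal complement.

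First I would view $\Se$ as a full subcategory of the category of arrows $\Ar\Ce$. As observed in the proof of Theorem~\ref{theorem6.3}, $\Ar\Ce$ is accessible and, by Lemma~\ref{relativizing}, it embeds accessibly into a category of structures in a way that preserves complexity; hence $\Ar\Ce$ is itself an accessible category of structures, and the complexity of $\Se$ as a full subcategory of $\Ar\Ce$ equals its complexity as a class of morphisms of~$\Ce$, which is sufficiently low by hypothesis. Theorem~\ref{maintheorem} therefore applies to $\Se\subseteq\Ar\Ce$: there is a dense small full subcategory $\De\subseteq\Se$ and a regular cardinal~$\kappa$, which we may in addition choose so that $\Ce$ is $\kappa$\nobreakdash-accessible, such that for every $f\in\Se$ the category $(\De\downarrow f)$ is $\kappa$\nobreakdash-filtered and $f$ is a colimit of the canonical diagrams $(\De\downarrow f)\to\Se$ and $(\De\downarrow f)\to\Ar\Ce$. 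In particular every $f\in\Se$ is a $\kappa$\nobreakdash-filtered colimit of objects of~$\De$, and this colimit is preserved by the inclusion $\Se\hookrightarrow\Ar\Ce$.

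Next I would invoke Lemma~\ref{dense} with $\lambda=\kappa$, with the $\kappa$\nobreakdash-accessible category~$\Ce$, and with $\Fe=\Se$: its hypotheses are precisely what the previous step produced, so $\De^{\perp}=\Se^{\perp}$. Since $\De$ is small, $\Se^{\perp}$ is a small-orthogonality class, which is the first assertion.

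For the reflectivity clause, assume in addition that $\Ce$ is cocomplete. Being accessible and cocomplete, $\Ce$ is locally presentable by \cite[Corollary~2.47]{AR}, and in a locally presentable category the orthogonal complement of a \emph{set} of morphisms is reflective (see \cite[\S 1.C]{AR}); applying this to~$\De$ shows that $\Se^{\perp}=\De^{\perp}$ is reflective. The steps requiring care are the identification of $\Ar\Ce$ as an accessible category of structures carrying $\Se$ with unchanged complexity, supplied by Lemma~\ref{relativizing} together with the accessibility of arrow categories, and, in the cocomplete case, the appeal to the standard reflectivity of small-orthogonality classes in locally presentable categories; everything else is a direct combination of Theorem~\ref{maintheorem} and Lemma~\ref{dense}.
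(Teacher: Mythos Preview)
Your proof is correct and follows essentially the same route as the paper's: view $\Se$ as a full subcategory of $\Ar\Ce$, apply Theorem~\ref{maintheorem} to obtain a dense small $\De\subseteq\Se$, use Lemma~\ref{dense} to conclude $\De^{\perp}=\Se^{\perp}$, and finish with the standard reflectivity of small-orthogonality classes in locally presentable categories (the paper cites \cite[1.37]{AR} directly, whereas you pass through \cite[Corollary~2.47]{AR}). Your version is simply more explicit about the intermediate steps.
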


\begin{proof}
If we view $\Se$ as a full subcategory of the category of arrows of~$\Ce$, then
Theorem~\ref{maintheorem} ensures that $\Se$ has a dense small full subcategory $\De$
and Lemma~\ref{dense} implies that $\De^{\perp}=\Se^{\perp}$. Hence $\Se^{\perp}$
is a small-orthogonality class, and small-orthogonality classes are reflective 
if colimits exist \cite[1.37]{AR}.
\end{proof}

If we weaken the assumption that $\Se$ is closed under limits in 
Theorem~\ref{corollary6.4}, by imposing only that it is closed under products and retracts,
then we may infer similarly that $\Se$ is weakly reflective, under the hypotheses made in the statement.
On the other hand, it is shown in \cite{CGR} that, assuming the nonexistence of measurable cardinals, there is 
a {\boldmath$\Sigma_2$} full subcategory $\Se$ of the category of abelian groups which is closed under 
products and retracts but not weakly reflective. Specifically, $\Se$ is the closure
of the class of groups $\Z^\kappa/\Z^{<\kappa}$ under products and retracts, where $\kappa$ runs over all cardinals.
Hence, the statement that all
{\boldmath$\Sigma_2$} full subcategories closed under products and retracts
in locally presentable categories are weakly reflective
implies the existence of measurable cardinals, while it follows from the existence
of supercompact cardinals.

\begin{theorem} 
Every full subcategory closed under colimits and definable with sufficiently low complexity
in a locally presentable category $\Ce$ of structures is coreflective.
\end{theorem}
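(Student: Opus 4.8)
The plan is to show that $\Se$, regarded as a category in its own right, is locally presentable, and then to conclude that the colimit-preserving inclusion $\Se\hookrightarrow\Ce$ is a left adjoint, i.e.\ that $\Se$ is coreflective in~$\Ce$.

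First, since $\Ce$ is cocomplete and $\Se$ is closed under colimits in~$\Ce$, the subcategory $\Se$ is cocomplete, colimits in $\Se$ being computed as in~$\Ce$; in particular $\Se$ is isomorphism-closed, the inclusion $\Se\hookrightarrow\Ce$ preserves all colimits, and $\Se$ is closed under $\lambda$\nobreakdash-filtered colimits in $\Ce$ for every regular~$\lambda$. Next I would apply Theorem~\ref{maintheorem} to the category~$\Ce$ (which, being locally presentable, is an accessible category of structures) and to its subcategory~$\Se$, whose complexity hypotheses for $n\ge 2$ are exactly those encoded in ``definable with sufficiently low complexity''. This provides a dense small full subcategory $\De\subseteq\Se$ together with a regular cardinal~$\kappa$ which, following the proof of Theorem~\ref{maintheorem}, may be chosen so that $\Ce$ is $\kappa$\nobreakdash-accessible and every object of $\De$ belongs to $H(\kappa)$ and is therefore $\kappa$\nobreakdash-presentable in~$\Ce$ by Lemma~\ref{MakkaiPare}, and so that every $Y\in\Se$ is the colimit of the $\kappa$\nobreakdash-filtered canonical diagram $(\De\downarrow Y)\to\Se$.

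Now, because $\Se$ is closed under $\kappa$\nobreakdash-filtered colimits in~$\Ce$, the inclusion creates them, so $\kappa$\nobreakdash-filtered colimits in $\Se$ exist and coincide with those in~$\Ce$; it follows that every object of~$\De$, being $\kappa$\nobreakdash-presentable in~$\Ce$ and lying in the full subcategory~$\Se$, is $\kappa$\nobreakdash-presentable in~$\Se$ as well. Hence the objects of $\De$ form a set of $\kappa$\nobreakdash-presentable objects of~$\Se$ such that every object of $\Se$ is a $\kappa$\nobreakdash-filtered colimit of them, which shows that $\Se$ is $\kappa$\nobreakdash-accessible. Being accessible and cocomplete, $\Se$ is locally presentable. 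Finally, the inclusion $\Se\hookrightarrow\Ce$ is a colimit-preserving functor between locally presentable categories, hence a left adjoint by the dual of the Special Adjoint Functor Theorem \cite[V.8]{Mac} (locally presentable categories are cocomplete, co-wellpowered and possess a strong generator); a right adjoint of the inclusion is precisely a coreflection onto~$\Se$.

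The step I expect to require the most care is the passage from $\kappa$\nobreakdash-presentability in~$\Ce$ to $\kappa$\nobreakdash-presentability in~$\Se$: this relies on $\Se$ being closed under \emph{all} colimits, not merely under filtered ones, so that $\kappa$\nobreakdash-filtered colimits in $\Se$ genuinely agree with those of~$\Ce$, and on the fact---already arranged within the proof of Theorem~\ref{maintheorem} via Lemma~\ref{MakkaiPare}---that the witnessing cardinal $\kappa$ can simultaneously be taken large enough for $\Ce$ to be $\kappa$\nobreakdash-accessible and for the objects of $\De$ in $H(\kappa)$ to be $\kappa$\nobreakdash-presentable in~$\Ce$.
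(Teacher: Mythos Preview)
Your proof is correct and is precisely a spelled-out version of the argument the paper defers to with ``Argue as in \cite[Theorem~6.28]{AR}'': you use Theorem~\ref{maintheorem} in place of the Vop\v{e}nka-based boundedness result of~\cite{AR} to exhibit $\Se$ as $\kappa$\nobreakdash-accessible, conclude it is locally presentable, and then invoke the adjoint functor theorem for the colimit-preserving inclusion. One minor remark: the passage from $\kappa$\nobreakdash-presentability in $\Ce$ to $\kappa$\nobreakdash-presentability in $\Se$ only needs closure of $\Se$ under $\kappa$\nobreakdash-filtered colimits (plus fullness), not closure under all colimits; the latter is needed only to make $\Se$ cocomplete.
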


\begin{proof} 
Argue as in \cite[Theorem~6.28]{AR}. 
\end{proof}

\section{Consequences in homotopy theory}
\label{cohomologicallocalizations}

Hovey conjectured in \cite{Hov} that for every cohomology theory
defined on spectra there is a homology theory with the same acyclics. 
This conjecture remains so far unsolved.
In a different but closely related direction, the existence of cohomological
localizations is also an open problem in ZFC, although it is known
that it follows from Vop\v{e}nka's principle, both in unstable homotopy
and in stable homotopy, by \cite{CSS} and~\cite[Theorem~1.5]{CCh}.

Motivated by these problems,
in this section we compare homological acyclic classes with
cohomological acyclic classes from the point of view of complexity
of their definitions.
We consider homology theories and cohomology theories defined on
simplicial sets and represented by spectra.

Spectra will be meant in the sense of Bousfield--Friedlander \cite{BF}.
Thus, a \emph{spectrum} $E$ is a sequence of pointed simplicial sets
\[
\langle(E_n,p_n) : \,p_n\in(E_n)_0,\;0\le n<\omega\rangle
\]
equipped with pointed simplicial maps $\sigma_n\colon S E_{n}\to E_{n+1}$ for all~$n$.
Here $S$ denotes \emph{suspension}, that is, $S X=\Sph^1\wedge X$. For $k\ge 1$,
we denote by $\Sph^{k}$ the simplicial $k$\nobreakdash-sphere, namely
$\Sph^{k}=\Delta[k]/\partial\Delta[k]$, where $\Delta[k]$ is the standard $k$\nobreakdash-simplex
and $\partial\Delta[k]$ is its boundary.
For pointed simplicial sets $X$ and~$Y$,
the \emph{smash product} $X\wedge Y$ is the quotient of the product $X\times Y$ by the
wedge sum $X\vee Y$, and we
denote by $\Map_*(X,Y)$ the \emph{pointed function complex} from $X$ to~$Y$, whose $n$\nobreakdash-simplices are
the pointed maps $X\wedge\Delta[n]_+\to Y$, where the subscript $+$ means that a disjoint basepoint has been added.

A simplicial set is \emph{fibrant} if it is a Kan complex \cite{Kan}. 
For the purposes of this article, it will be convenient to use
Kan's ${\rm Ex}^{\infty}$ construction
as a fibrant replacement functor. Thus, there is a natural (injective)
weak equivalence $j_Y\colon Y\hookrightarrow \Exinf Y$ for all~$Y$,
where $\Exinf Y$ is fibrant.

Let $[X,Y]$ denote the set of morphisms from $X$ to $Y$ in the pointed
homotopy category of simplicial sets, which
can be described as the set of pointed homotopy classes of maps $X\to\Exinf Y$.
If $Y$ is fibrant, then this is in bijective correspondence, via~$j_Y$, with 
the set of pointed homotopy classes of maps $X\to Y$.

A~spectrum $E$ is an \emph{$\Omega$\nobreakdash-spectrum} if each $E_n$ is fibrant and the
adjoints $\tau_n\colon E_n\to \Omega E_{n+1}$ of the structure maps $\sigma_n\colon SE_n\to E_{n+1}$ are weak equivalences,
where $\Omega$ denotes the \emph{loop space} functor $\Omega X=\Map_*(\Sph^1,X)$.

Each spectrum $E$ defines a reduced homology theory $E_*$ on simplicial sets~by
\begin{equation}
\label{homologies}
E_k(X)=\colim_n\,\pi_{n+k}(X\wedge E_n) = \colim_n\,[\Sph^{n+k}, X\wedge E_n]
\end{equation}
for $k\in\Z$,
and, if $E$ is an $\Omega$\nobreakdash-spectrum, then $E$ defines a reduced cohomology theory $E^*$
on simplicial sets by
\begin{equation}
\label{cohomologies}
E^k(X)=\colim_n\,\pi_{n-k}(\Map_*(X,E_n)) = \colim_n\,[S^nX,E_{n+k}]
\end{equation}
for $k\in\Z$. Note that, if $k\ge 0$, then simply $E^k(X)\cong [X,E_k]$.

Such homology or cohomology theories are called \emph{representable},
and we shall only consider these in this article. Although not
every generalized homology or cohomology theory in the sense of Eilenberg--Steenrod is representable
\cite[Example~II.3.17]{Rud}, homological localizations have only been constructed and studied
assuming representability \cite{AF}, \cite{BouTop}. According to Brown's representability theorem,
every cohomology theory which is \emph{additive} (i.e., sending coproducts to products)
is represented by some $\Omega$\nobreakdash-spectrum. Similarly, homology theories
that preserve filtered colimits are representable. 
See \cite{Ada} or \cite{Rud} for further details.

In most of what follows, we assume that $E$ is an $\Omega$\nobreakdash-spectrum.
A~simplicial set $X$ is called \emph{$E_*$\nobreakdash-acyclic}
if $E_k(X)=0$ for all $k\in\Z$, and, similarly, 
$X$ is \emph{$E^*$\nobreakdash-acyclic} if $E^k(X)=0$ for all $k\in\Z$.
Observe that, by~\eqref{cohomologies}, the statement that $X$ is $E^*$\nobreakdash-acyclic is equivalent
to the statement that the pointed function complex $\Map_*(X,E_n)$
is weakly contractible (that is, connected and with vanishing homotopy groups) for all~$n$.

A~map $f\colon X\to Y$ is an \emph{$E_*$\nobreakdash-equivalence} if 
\[
E_k(f)\colon E_k(X)\longrightarrow E_k(Y)
\] 
is an isomorphism of abelian groups for all $k\in\Z$,
and similarly for cohomology.
Let $Cf$ denote the \emph{mapping cone} of~$f$, which 
is obtained from the disjoint union of $Y$ and $X\times \Delta[1]$ by identifying
$X\times\{0\}$ with $f(X)\subseteq Y$ using~$f$, and collapsing
$X\times\{1\}$ to a point. Using the Mayer--Vietoris axiom, one finds that
$f$ is an $E_*$\nobreakdash-equivalence if and only if $Cf$ is $E_*$\nobreakdash-acyclic, 
and analogously for cohomology.

The category of simplicial sets is~$\Delta_0$, locally presentable, and it has a canonical accessible embedding
into a category of structures with a finitary $\omega$\nobreakdash-sorted operational signature.
In fact, one can write down explicitly a formula without unbounded quantifiers 
expressing that $X$ and $Y$ are simplicial sets and $f$ is a simplicial map from $X$ to~$Y$.
This amounts to formalizing the claim that a simplicial set $X$
is a sequence of sets $\langle X_n:0\le n<\omega\rangle$ (where the elements
of $X_n$ are called \emph{$n$\nobreakdash-simplices}), 
together with functions $d_i^n\colon X_n\to X_{n-1}$ (called \emph{faces}) 
for $n\ge 1$ and $0\le i\le n$, and
$s_i^n\colon X_n\to X_{n+1}$ (called \emph{degeneracies})
for $n\ge 0$ and $0\le i\le n$, satisfying the simplicial identities;
see \cite[Definition~1.1]{May}. A~simplicial map $f\colon X\to Y$
is a sequence of functions $\langle f_n\colon X_n\to Y_n\rangle_{0\le n<\omega}$ compatible 
with faces and degeneracies.

Similarly, the category of spectra is~$\Delta_0$, locally presentable, and it also has an accessible embedding
into a category of structures with a finitary $\omega$\nobreakdash-sorted operational signature,
since a spectrum $E$ consists of a sequence of pointed simplicial sets $\langle (E_m,p_m):0\le m<\omega\rangle$,
where $p_m\in (E_m)_0$, and a sequence of pointed maps $\langle \sigma_m\colon SE_m\to E_{m+1}\rangle_{0\le m<\omega}$, each of which can be viewed as a map $\Delta[1]\times E_m\to E_{m+1}$ sending
$\partial\Delta[1]\times E_m$ and $\Delta[1]\times \{p_m\}$ to the basepoint~$p_{m+1}$.
Giving a map $f\colon \Delta[1]\times E_m\to E_{m+1}$
is equivalent to giving a collection of functions 
\[
f_0^0,f_0^1\colon(E_m)_0\to(E_{m+1})_0 \quad \mbox{and} \quad
f_k^0,f_k^1,f_k^{01}\colon(E_m)_k\to(E_{m+1})_k
\]
for~$k\ge 1$, with commutativity conditions
\[
\begin{array}{lll}
f_0^0\circ d_0^1=d_0^1\circ f_1^0, & \quad
f_0^1\circ d_0^1=d_0^1\circ f_1^1, & \quad
f_0^0\circ d_0^1=d_0^1\circ f_1^{01}, \\[0.2cm]
f_0^0\circ d_1^1=d_1^1\circ f_1^0, & \quad
f_0^1\circ d_1^1=d_1^1\circ f_1^1, & \quad
f_0^1\circ d_1^1=d_1^1\circ f_1^{01}, \\[0.2cm]
s_0^0\circ f_0^0=f_1^0\circ s_0^0, & \quad
s_0^0\circ f_0^1=f_1^1\circ s_0^0, 
\end{array}
\]
and correspondingly for $k\ge 1$.

\begin{proposition}
\label{weqs}
The following are $\Delta_1$ classes:
\begin{itemize}
\item[{\rm (1)}] Fibrant simplicial sets.
\item[{\rm (2)}] Weak equivalences of simplicial sets.
\item[{\rm (3)}] Weakly contractible spectra.
\item[{\rm (4)}] $\Omega$\nobreakdash-spectra.
\end{itemize}
\end{proposition}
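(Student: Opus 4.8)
The plan is to show that each of the four classes is absolute for transitive models of a sufficiently large finite fragment ${\rm ZFC}^*$ of~ZFC; by the discussion in Section~\ref{Levyhierarchy} (see around~\eqref{trick}), a class that is both upward and downward absolute for transitive models of such a fragment is~$\Delta_1$. Since the categories of simplicial sets and of spectra are~$\Delta_0$, only the additional conditions require attention, and the whole matter reduces to three absoluteness facts: (i)~the functor $\Exinf$, and more generally the elementary set-level constructions on simplicial sets and spectra used below --- function complexes $\Map_*(K,-)$ for a fixed finite pointed simplicial set~$K$, smash products, wedges, quotients, mapping cones --- are absolute, being given by explicit formulas and by forming sets of functions between sets already present; (ii)~for a fibrant simplicial set $Z$ with basepoint~$z_0$, the sets $\pi_n(Z,z_0)$ and the maps induced on them by simplicial maps are absolute; and (iii)~$\omega$\nobreakdash-indexed colimits of sets are absolute. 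Granting (i)--(iii), each clause below is a Boolean combination of $\Delta_0$ statements with statements obtained from absolute ones by quantification bounded by~$\omega$; since $\omega$ is absolute and is contained in every transitive model of ${\rm ZFC}^*$, such quantification preserves absoluteness, and we are done.

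For~(i): the barycentric subdivision $\mathrm{Sd}\,\Delta[n]$ is an explicitly described finite simplicial set (its $m$\nobreakdash-simplices are the chains of length $m+1$ in the poset of nonempty subsets of $\{0,\dots,n\}$), hence defined by a $\Delta_0$ formula; then $(\mathrm{Ex}\,X)_n$ is the set $\Hom(\mathrm{Sd}\,\Delta[n],X)$ of natural transformations, a subset of a finite product of the sets $X_m$ singled out by $\Delta_0$ conditions, and the natural map $X\to\mathrm{Ex}\,X$ is equally explicit. Iterating this recursion over~$\omega$ and passing to $\Exinf X=\colim_k\mathrm{Ex}^{k}X$ uses only the recursion theorem, Replacement and unions; these are available in a transitive model of a finite fragment of~ZFC, and the outcome coincides with the one computed in~$V$. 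The same holds for $\Map_*(K,X)$ with $K$ a fixed finite pointed simplicial set --- in particular for $\Omega X=\Map_*(\Sph^1,X)$ --- and for the smash products $X\wedge E_n$ and mapping cones appearing in~\eqref{homologies}, which are quotients and pushouts of sets that are already in the model.

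Observation~(ii) is the point emphasised in the Introduction. For $Z$ fibrant, a pointed simplicial map $\Sph^n\to Z$ is the same as a single $n$\nobreakdash-simplex of $Z$ all of whose faces coincide with the degeneracy of~$z_0$, a $\Delta_0$ condition in~$Z$; two such simplices represent the same element of $\pi_n(Z,z_0)$ precisely when there is an $(n+1)$\nobreakdash-simplex of $Z$ with a prescribed boundary, again $\Delta_0$. Thus $\pi_n(Z,z_0)$ is a quotient of a subset of $Z_n$ by an equivalence relation, all defined by $\Delta_0$ formulas from $Z$ and~$z_0$, and a simplicial map $Z\to Z'$ acts on it by the same absolute recipe. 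Observation~(iii) is immediate: a colimit of an $\omega$\nobreakdash-indexed sequence of sets is a quotient of a disjoint union by an absolutely definable relation. We now assemble the statements.
\begin{itemize}
\item[{\rm (1)}] ``$X$ is a fibrant simplicial set'' says that $X$ is a simplicial set and, for all $n\ge 1$ in $\omega$ and all $k\le n$, every map $\Lambda^{n}_{k}\to X$ extends over~$\Delta[n]$. A map $\Lambda^{n}_{k}\to X$ is a finite compatible family of simplices of~$X$, and an extension is a single $n$\nobreakdash-simplex of $X$ with prescribed faces; so, apart from the quantifier over~$\omega$, all quantifiers are bounded. If $M\subseteq N$ are transitive models of a finite fragment of~ZFC with $X\in M$, then every horn in $X$ and every filler of it lies already in~$M$, so $M$ and $N$ agree on whether $X$ is fibrant. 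Hence the class is absolute, therefore~$\Delta_1$.
\item[{\rm (2)}] ``$f\colon X\to Y$ is a weak equivalence of simplicial sets'' says that $f$ is a simplicial map and, with $g=\Exinf f\colon\Exinf X\to\Exinf Y$, the induced map $\pi_0(\Exinf X)\to\pi_0(\Exinf Y)$ is a bijection and, for every vertex $x_0\in(\Exinf X)_0$ and every $n\ge 1$ in $\omega$, the induced map $\pi_n(\Exinf X,x_0)\to\pi_n(\Exinf Y,g(x_0))$ is a bijection. By (i)--(iii) this is absolute for transitive models of a finite fragment of~ZFC, hence~$\Delta_1$.
\item[{\rm (3)}] ``$E$ is a weakly contractible spectrum'' says that $E$ is a spectrum whose homotopy groups $\pi_n(E)=\colim_k\pi_{n+k}(\Exinf E_k)$ vanish for every $n\in\Z$; equivalently, for an $\Omega$\nobreakdash-spectrum, that each $E_n$ is weakly contractible in the sense of~\eqref{cohomologies}. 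However formalised, the condition is built from the absolute constructions of (i)--(iii) together with the $\Delta_0$ test ``this set has exactly one element'', so it is absolute for transitive models of a finite fragment of~ZFC, hence~$\Delta_1$.
\item[{\rm (4)}] ``$E$ is an $\Omega$\nobreakdash-spectrum'' says that $E$ is a spectrum, each $E_n$ is fibrant, and for each $n$ the adjoint $\tau_n\colon E_n\to\Omega E_{n+1}=\Map_*(\Sph^1,E_{n+1})$ of the structure map $\sigma_n$ is a weak equivalence. The simplicial sets $\Omega E_{n+1}$ and the maps $\tau_n$ arise from the absolute constructions of~(i); being fibrant is absolute by~(1) and being a weak equivalence by~(2); and the surrounding quantifier ranges over~$\omega$. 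Hence this class is absolute for transitive models of a finite fragment of~ZFC, therefore~$\Delta_1$.
\end{itemize}

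The one step that is not mere bookkeeping is the absoluteness of $\Exinf$ and of the homotopy sets of Kan complexes, that is, observations~(i) and~(ii); with those in hand, everything else follows from the stability of absoluteness under conjunction, disjunction, negation, and quantification bounded by~$\omega$.
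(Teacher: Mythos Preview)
Your proof is correct and arrives at the same conclusion by a genuinely different route from the paper's. The paper argues directly at the level of formulas: for each of the four classes it writes down an explicit description whose only unbounded quantifiers range over finite sequences of simplices of the given object, and then invokes the fact (Example~\ref{example3}) that quantification over finite subsets is~$\Delta_1$. In particular, for weak equivalences the paper never forms $\Exinf X$ as an object; instead it uses the adjunction $\mathrm{sd}^k\dashv\mathrm{Ex}^k$ to rewrite ``a map $\Sph^n\to\mathrm{Ex}^k Y$'' as a finite tuple of simplices of $Y$ satisfying a $\Delta_0$ list of face relations, and then quantifies over the finite stage~$k$.

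Your approach is to invoke once and for all the absoluteness criterion around~\eqref{trick}: you isolate three absolute ingredients --- the construction $\Exinf$ (and the other elementary combinatorial functors), the combinatorial description of $\pi_n$ of a Kan complex, and $\omega$\nobreakdash-indexed colimits --- and then observe that each of the four classes is assembled from these by $\omega$\nobreakdash-bounded quantification. This is cleaner to read and makes the underlying reason (everything is determined by finite simplex data inside the given objects) more transparent; the paper's approach, by contrast, yields the $\Delta_1$ formulas more or less explicitly, which some readers may prefer. One small remark: in your treatment of~(3), the clause ``equivalently, for an $\Omega$\nobreakdash-spectrum, that each $E_n$ is weakly contractible'' is superfluous and potentially misleading, since the proposition concerns arbitrary spectra; your first formulation via the colimit $\pi_n(E)=\colim_k\pi_{n+k}(\Exinf E_k)$ is the correct one and already suffices.
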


\begin{proof}
The assertion that a given simplicial set $X$ is fibrant can be formalized
by means of the Kan extension condition, as in~\cite[Definition~1.3]{May}.
Explicitly, a simplicial set $X$ is fibrant if and only if
for every $1\le n<\omega$ and every $k\leq n+1$, the following sentence holds: 
For all $x_0,x_1,\dots,x_{n+1}\in X_n$ such that $d_i^n x_j=d_{j-1}^n x_i$ for $i<j$, $i\ne k$ 
and $j\ne k $, there exists $x\in X_{n+1}$ such that $d_i^{n+1}x=x_i$ for $i\ne k$. 
Since quantification over finite subsets is~$\Delta_1$ (see Example~\ref{example3}), 
the class of fibrant simplicial sets is $\Delta_1$\nobreakdash-definable.

Towards~(2), recall that a map of simplicial sets $f\colon X\to Y$ is a weak equivalence if and only if
it induces a bijection of connected components and isomorphisms of homotopy groups
for every choice of a basepoint.
Let us assume first that $X$ and $Y$ are fibrant. 
Then $f$ induces a bijection of connected components
if and only if, for all $x_0$ and $x_1$ of $X_0$, if there exists $v\in Y_1$ with $d_0^1v=f(x_0)$
and $d_1^1v=f(x_1)$, then there exists $u\in X_1$ with $d_0^1u=x_0$ and $d_1^1u=x_1$,
and moreover for each $y\in Y_0$ there exist $x\in X_0$ and $v\in Y_1$ such that
$d_0^1v=y$ and $d_1^1v=f(x)$.
Hence, the statement that $f$ induces a bijection of
connected components is~$\Delta_0$.

Similarly, if a simplicial set $X$ is fibrant, then the $n$th homotopy group
$\pi_n(X,p)$ with basepoint $p\in X_0$ is the quotient of the set of all $x\in X_n$ such that
$d_i^nx=sp$ for all~$i$ (where $s=s_{n-2}^{n-2}\circ\cdots\circ s_0^0$) by the homotopy relation, where
$x\sim x'$ if $d_i^nx=d_i^nx'$ for all $i$ and there exists $z\in X_{n+1}$ with
$d_{n+1}^{n+1}z=x$, $d_n^{n+1}z=x'$, and $d_i^{n+1}z=s_{n-1}d_i^nx$ for $0\le i<n$; compare with \cite[Definition~3.1]{May}.
Therefore, if $X$ and $Y$ are fibrant, then $f$ induces an isomorphism
$\pi_n(X,p)\cong\pi_n(Y,q)$, where $p\in X_0$ and $q=f(p)$, if and only if the
following sentence holds:
\[
\begin{array}{c}
\forall y\in Y_n \, [\forall i\le n\,(d_i^ny=sq) \to [\exists x\in X_n \,(\forall i\le n \,
(d_i^nx=sp)
\\[0.2cm]
\wedge \, f_n(x)\sim y \,
\wedge \, \forall x'\in X_n \, ((\forall i\le n\, (d_i^nx'=sp)\, \wedge \, f_n(x')\sim y) \to x\sim x'))]].
\end{array}
\]
This shows that the statement that a map between \emph{fibrant} simplicial sets
is a weak equivalence is~$\Delta_1$.

Next we analyze the complexity of a fibrant replacement.
For a simplicial set~$X$, the map $j_X\colon X\hookrightarrow\Exinf X$ can be defined
as the inclusion of $X$ into a simplicial set $\Exinf X$ defined as follows.
Let ${\rm Ex}^1 X$ be the simplicial set whose set
of $n$\nobreakdash-simplices is the set of all maps from the barycentric subdivision of $\Delta[n]$
into~$X$. The barycentric subdivision ${\rm sd}\,\Delta[n]$ is the nerve of the poset of nondegenerate
simplices of $\Delta[n]$ (see \cite[Ch.\,III, \S 4]{GJ}).
The \emph{last vertex map} ${\rm sd}\,\Delta[n]\to\Delta[n]$
yields an inclusion $X\hookrightarrow {\rm Ex}^1 X$.
Then $\Exinf X$ is the union of a sequence of inclusions ${\rm Ex}^k X\hookrightarrow {\rm Ex}^{k+1} X$
for $k\ge 1$, where ${\rm Ex}^k$ is the composite of ${\rm Ex}^1$ with itself $k$~times.

Let $p$ be any vertex of~$X$. Each element in $\pi_n(\Exinf Y,f(p))$ is represented by a map $\Sph^n\to{\rm Ex}^k Y$
based at $f(p)$ for some $k<\omega$, that is, a map from $\Delta[n]$ to ${\rm Ex}^k Y$ sending the boundary of 
$\Delta[n]$ to~$f(p)$.
By adjointness, the maps $\Delta[n]\to {\rm Ex}^k Y$ correspond bijectively with the
maps ${\rm sd}^k\Delta[n]\to Y$, where ${\rm sd}^k$ is an iterated barycentric subdivision.
Let $a_{k,n}$ be the number of nondegenerate $n$\nobreakdash-simplices of ${\rm sd}^k\Delta[n]$
and let $R_{k,n}$ be the set of all relations among their faces. 
For example, $a_{2,1}=4$ and $R_{2,1}$ consists of the equalities 
\[
\begin{array}{c}
d_1^1\,x_{\mbox{\tiny{$(0\to 001)$}}}  = d_1^1\,x_{\mbox{\tiny{$(01\to 001)$}}}, \qquad
d_0^1\,x_{\mbox{\tiny{$(01\to 001)$}}} = d_0^1\,x_{\mbox{\tiny{$(01\to 011)$}}},\\[0.2cm]
d_1^1\,x_{\mbox{\tiny{$(01\to 011)$}}} = d_1^1\,x_{\mbox{\tiny{$(1\to 011)$}}}.
\end{array}
\]
Thus, each map $\Delta[n]\to {\rm Ex}^kY$ is determined by a sequence of
$a_{k,n}$ (not necessarily distinct) elements of $Y_n$
satisfying a set $R_{k,n}$ of equalities among their faces. 
In what follows, when we write ``a map $\beta\colon\Sph^n\to{\rm Ex}^kY$''
we implicitly formalize it as an ordered sequence of $a_{k,n}$ elements of $Y_n$ satisfying a set $S_{k,n}$ 
of sentences, including those of $R_{k,n}$ and those needed to express the fact that $\partial\Delta[n]$
is sent to the basepoint~$f(p)$. Homotopies into ${\rm Ex}^k Y$ are formalized similarly.

The assertion that $f\colon X\to Y$ induces $\pi_n(\Exinf X,p)\cong\pi_n(\Exinf Y,f(p))$ 
for every $p\in X_0$ can therefore be expressed by stating that for every $k<\omega$ and every map
$\beta\colon\Sph^n\to{\rm Ex}^kY$ based at $f(p)$
there exist $l<\omega$ and a map $\alpha\colon\Sph^n\to{\rm Ex}^lX$
based at $p$ and a homotopy $H\colon \Sph^n\wedge\Delta[1]_+\to {\rm Ex}^r Y$
from $({\rm Ex}^r f)\circ\alpha$ to~$\beta$, where $r\ge k$ and $r\ge l$, 
and, moreover, if $\alpha'\colon\Sph^n\to{\rm Ex}^m X$ 
is based at $p$ and there is a homotopy from $({\rm Ex}^r f)\circ\alpha'$ to $\beta$ with $r\ge k$
and $r\ge m$, then there is a homotopy $H\colon \Sph^n\wedge\Delta[1]_+\to {\rm Ex}^s X$ 
from $\alpha$ to $\alpha'$ with $s\ge l$ and $s\ge m$.
Therefore, the class of
weak equivalences between simplicial sets 
is $\Delta_1$\nobreakdash-definable.

Having proved (1) and~(2), we next address~(3).
A~spectrum $F$ is weakly contractible if and only if
all its homotopy groups vanish, that is,
\[
\mbox{$\colim_n\, [\Sph^{n+k}, F_n]=0$ for all~$k\in\Z$.}
\]
This is equivalent to imposing that, for all $k\in\Z$ and $n\ge 0$ such that $n+k\ge 0$,
each pointed map 
$\beta\colon \Sph^{n+k}\to\Exinf F_n$ 
becomes nullhomotopic after suspending it a finite number of times (say, $m$~times) 
and composing with the structure maps $\sigma_n\colon S F_n\to F_{n+1}$.
More precisely, on the one hand, we have:
\begin{equation}
\label{suspending}
\SelectTips{cm}{}
\xymatrix{ 
\Sph^{n+m+k}\ar[r]^-{S^m \beta} & S^m \Exinf F_n\ar[r]^-{j} & \Exinf S^m \Exinf F_n,
}
\end{equation}
and, on the other hand, there are maps
\[
\SelectTips{cm}{}
\xymatrixcolsep{2cm}
\xymatrix{ 
\Exinf S^m \Exinf F_n & \Exinf S^m F_n\ar[l]_-{\Exinf S^m j}\ar[r]^-{\Exinf \sigma} & \Exinf F_{n+m},
}
\]
where $\sigma$ is an abbreviation for
$\sigma_{n+m-1}\circ S\sigma_{n+m-2}\circ \cdots\circ S^{m-2}\sigma_{n+1}\circ S^{m-1}\sigma_n$.
The maps $j$ and $\Exinf S^m j$ are natural weak equivalences.

Hence, $F$ is weakly contractible if and only if, for each~$k\in\Z$
and each $(n+k)$\nobreakdash-simplex $x\in\Exinf F_n$ whose faces are equal to the basepoint,
there is an $(n+m+k)$\nobreakdash-simplex $y\in \Exinf S^m F_n$ whose faces are equal to the basepoint and
an $(n+m+k+1)$\nobreakdash-simplex $z\in \Exinf F_{n+m}$ whose top face 
is $y$ and all its other faces are equal to the basepoint, and $(\Exinf S^m j)y\sim j(S^mx)$.

We finally prove~(4).
In order to formalize the fact that a spectrum $E$ is an $\Omega$\nobreakdash-spectrum, 
we first need that each simplicial set $E_n$ be fibrant. Then we need to define
the adjoint maps $\tau_n\colon E_n\to\Omega E_{n+1}$ and we need to impose that each $\tau_n$
be a weak equivalence. To define~$\tau_n$, let $x$ be a $k$\nobreakdash-simplex of~$E_n$. Its image
in $\Omega E_{n+1}=\Map_*(\Sph^1,E_{n+1})$ is a map $\Sph^1\wedge\Delta[k]_+\to E_{n+1}$
which is determined by imposing that 
\[
(\tau_n(x))(se_1,e_k)=\sigma_n(se_1,x),
\] 
where $e_1$ is the nondegenerate $1$\nobreakdash-simplex of $\Sph^1$ and $e_k$ is the nondegenerate
$k$\nobreakdash-simplex of~$\Delta[k]$, and $s$ denotes a composition of degeneracies.
\end{proof}

In what follows, we denote by $\Ssets_*$ the category of pointed simplicial sets
and pointed maps.

\begin{theorem}
\label{homology}
The class of $E_*$\nobreakdash-acyclic simplicial sets for a spectrum $E$ is~{\boldmath$\Delta_1$} with
$E$ as a parameter.
\end{theorem}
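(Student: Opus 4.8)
The plan is to unwind the statement ``$X$ is $E_*$-acyclic'' into an assertion using only bounded quantifiers together with quantification over $\omega$ and over finite sets, exactly in the style of the proof of Proposition~\ref{weqs}; since such quantification is $\Delta_1$ by Example~\ref{example3}, the class of $E_*$-acyclic simplicial sets will then be $\Delta_1$ with $E$ as a parameter. (Equivalently, one could show that $E_*$-acyclicity is absolute for transitive models of a sufficiently large finite fragment of ZFC and invoke the argument around~\eqref{trick}.) First I would rewrite $E_*$-acyclicity in terms of vanishing of homotopy classes: by~\eqref{homologies}, $X$ is $E_*$-acyclic if and only if for every $k\in\Z$ the sequential colimit $\colim_n\,[\Sph^{n+k},X\wedge E_n]$ is trivial, where the bonding map sends a class represented by $\beta\colon\Sph^{n+k}\to X\wedge E_n$ to its suspension, composed with the twist isomorphism $S(X\wedge E_n)\cong X\wedge SE_n$ and with $\ident_X\wedge\sigma_n\colon X\wedge SE_n\to X\wedge E_{n+1}$. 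As the colimit is indexed by $\omega$, it is trivial precisely when every element dies at a finite stage. Hence $X$ is $E_*$-acyclic if and only if, for all $k\in\Z$, all $n\ge 0$ with $n+k\ge 0$, and every pointed map $\beta\colon\Sph^{n+k}\to\Exinf(X\wedge E_n)$, there is some $m<\omega$ such that the map $\Sph^{n+m+k}\to\Exinf(X\wedge E_{n+m})$ obtained by suspending $\beta$ $m$ times, inserting the twist isomorphisms, and composing with the iterated structure maps of the spectrum $X\wedge E$ (together with the natural weak equivalences $j$ relating the various $\Exinf$'s, just as in the analysis of weakly contractible spectra in Proposition~\ref{weqs}) is nullhomotopic.

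Next I would verify that every ingredient in this reformulation is expressible with bounded quantifiers, relative to operations already shown to be $\Delta_1$. The smash product $X\wedge E_n$ is the simplicial set whose $j$-simplices are the pairs of $j$-simplices of $X$ and of $E_n$ modulo the wedge, with diagonal faces and degeneracies, so ``$w$ is a $j$-simplex of $X\wedge E_n$ with prescribed faces'' is $\Delta_0$ in $X$, $E$ and~$j$; and the suspension $S(X\wedge E_n)=\Sph^1\wedge X\wedge E_n$, the twist isomorphism, and the structure maps of $X\wedge E$ are equally concrete. Moreover, since $\Sph^{n+k}$ and $\Sph^{n+k}\wedge\Delta[1]_+$ are finite simplicial sets and $\Exinf$ is a sequential colimit along injections, any pointed map $\Sph^{n+k}\to\Exinf(X\wedge E_n)$ factors through ${\rm Ex}^{r}(X\wedge E_n)$ for some finite~$r$, and such a map is, by adjunction, a finite tuple of simplices of $X\wedge E_n$ indexed by the nondegenerate simplices of the fixed finite complex ${\rm sd}^{r}\Delta[n+k]$ and subject to a finite set of equalities among their faces (those of $R_{r,n+k}$ together with the conditions forcing the boundary to the basepoint), exactly as in the treatment of weak equivalences in Proposition~\ref{weqs}; nullhomotopies are formalized likewise. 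Consequently the displayed condition is a conjunction over $k\in\Z$ of statements of the form $\forall n\,\forall r\,\forall\beta\,\exists m\,\exists s\,\exists H\,(\cdots)$ in which every unbounded quantifier ranges over $\omega$ or over a finite set and the matrix is $\Delta_0$ in $X$ and in the parameters $E$, $\{\Sph^{j}\}_{j}$ and $\{{\rm sd}^{r}\Delta[j]\}_{r,j}$. As in Proposition~\ref{weqs}, this exhibits the class of $E_*$-acyclic simplicial sets as $\Delta_1$ with $E$ as a parameter.

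The only genuinely new point beyond the template already established for cohomological weak contractibility is the bookkeeping of the auxiliary spectrum $X\wedge E$: identifying $S(X\wedge E_n)$ with $X\wedge SE_n$ by the twist, iterating the structure maps, and keeping track of where the added copies of $\Sph^1$ sit, so that ``the image of $[\beta]$ under the colimit bonding maps vanishes'' really does reduce to a bounded assertion about finitely many simplices of $X$ and of the $E_n$'s. This is routine but somewhat fiddly, and is the main obstacle; once it is done, the rest of the argument runs in parallel with the proof of Proposition~\ref{weqs}.
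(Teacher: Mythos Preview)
Your approach is correct and follows the template of Proposition~\ref{weqs}, but the paper takes a somewhat different route that is worth noting. Rather than unwinding the colimit $\colim_n [\Sph^{n+k},X\wedge E_n]$ directly into a formula about finite tuples of simplices of $X$ and of the~$E_n$, the paper first observes that $X$ is $E_*$\nobreakdash-acyclic if and only if the spectrum $X\wedge E$ is weakly contractible, and then introduces this spectrum via a single unbounded quantifier: the $\Sigma_1$ form is $\exists F\,[\,F$ is a spectrum $\wedge$ $(\forall n<\omega)(F_n=X\wedge E_n$ with the correct structure maps$)$ $\wedge$ $F$ is weakly contractible$\,]$, and the $\Pi_1$ form is the corresponding $\forall F\,[\cdots\to\cdots]$. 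The clause ``$F$ is weakly contractible'' is $\Delta_1$ by part~(3) of Proposition~\ref{weqs}, and the defining clauses for $F_n$ and the structure maps are $\Delta_0$ once a concrete set-level model of the smash product is fixed. This device of naming the auxiliary object with a single $\exists/\forall$ is cleaner and more modular: it reuses Proposition~\ref{weqs}(3) as a black box and avoids the bookkeeping you flag as ``routine but fiddly'' (tracking twists, iterated structure maps, and the interaction with $\Exinf$). Your approach, by contrast, inlines that proof with $F=X\wedge E$, working throughout with representative pairs of simplices rather than ever forming $X\wedge E$ as a set; this buys you a formula whose only unbounded quantifiers range over finite sequences in $X$ and $E$, so that $\Delta_1$ follows directly from absoluteness as in Example~\ref{example3} and the discussion around~\eqref{trick}. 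Both arguments are sound; the paper's is shorter, yours is more self-contained.
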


\begin{proof}
If $(X,p)$ and $(Y,q)$ are pointed simplicial sets, then
$W=X\vee Y$ is a pointed simplicial set contained in $X\times Y$ such that $W_n$ contains
all elements of the form $(x,sq)$ with $x\in X_n$ and all those of the form $(sp,y)$ with $y\in Y_n$,
where $s$ is a composition of degeneracies, with basepoint $(p,q)$.
The smash product $X\wedge Y$ is obtained from $X\times Y$ by collapsing $X\vee Y$ to a point.
Hence, $(X\wedge Y)_n=(X_n\times Y_n)\setminus (W_n\setminus \{(sp,sq)\})$ for all~$n$,
and we declare equal to $(sp,sq)$ all faces of elements of $X_{n+1}\times Y_{n+1}$ 
and all degeneracies of elements of $X_{n-1}\times Y_{n-1}$ taking values in~$W_n$.

If $(X,p)$ is a pointed simplicial set and $E$ is a spectrum with structure maps $\langle \sigma_n : 0\le n<\omega\rangle$,
then $X\wedge E$ is a spectrum with $(X\wedge E)_n=X\wedge E_n$ and structure maps $({\rm id}\wedge \sigma_n)\circ(\tau\wedge{\rm id})$
for all~$n$, where $\tau\colon \Sph^1\wedge X\to X\wedge\Sph^1$ is the twist map. 
By part~(3) of Proposition~\ref{weqs}, the statement that $X\wedge E$ is weakly contractible
is~$\Delta_1$.
However, a formula expressing this fact has to contain
a definition of $X\wedge E$, where $E$ is a given spectrum treated as a parameter. 
This can be done in two equivalent ways, as follows:
\begin{equation}
\label{first}
\begin{array}{c}
X\in\Ssets_*\,\wedge\,\exists F\, [\mbox{$F$ is a spectrum}\,\wedge\, (\forall n<\omega)((F_n=X\wedge E_n)
\\[0.1cm]
\wedge \,
\sigma_n^F=({\rm id}\wedge \sigma_n^E)\circ(\tau\wedge{\rm id}))\,\wedge\, \mbox{$F$ is weakly contractible}];
\end{array}
\end{equation}
\begin{equation}
\label{second}
\begin{array}{c}
X\in\Ssets_*\,\wedge\,\forall F\, [[\mbox{$F$ is a spectrum}\,\wedge\, (\forall n<\omega)((F_n=X\wedge E_n)
\\[0.1cm]
\wedge\,
\sigma_n^F=({\rm id}\wedge \sigma_n^E)\circ(\tau\wedge{\rm id}))]\,\to\, \mbox{$F$ is weakly contractible}].
\end{array}
\end{equation}
Since \eqref{first} is $\Sigma_1$ and \eqref{second} is~$\Pi_1$, the theorem is proved.
\end{proof}

As explained in Section~\ref{Levyhierarchy}, the fact that homological acyclic classes
are {\boldmath$\Delta_1$} implies that they are absolute. This means that,
if $E$ is a spectrum and $M$ is a transitive model of ZFC such that $E\in M$ 
(in which case $E$ is a spectrum in $M$ as well, since being a spectrum is~$\Delta_0$), then
a simplicial set $X\in M$ is $E_*$\nobreakdash-acyclic in $M$ if and only if it is $E_*$\nobreakdash-acyclic.

We thank Federico Cantero for pertinent remarks about the argument given in the proof
of the next result.

\begin{theorem}
\label{cohomology}
The class of $E^*$\nobreakdash-acyclic simplicial sets 
for an $\Omega$\nobreakdash-spectrum~$E$ is~{\boldmath$\Delta_2$}
with $E$ as a parameter.
\end{theorem}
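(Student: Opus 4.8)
The plan is to use the equivalence noted just above the statement: a simplicial set $X$ is $E^*$-acyclic if and only if the pointed function complex $\Map_*(X,E_n)$ is weakly contractible for every $n<\omega$. Here the hypothesis that $E$ is an $\Omega$-spectrum enters, since its structure maps make the colimit system in \eqref{cohomologies} consist of isomorphisms, so $\pi_j(\Map_*(X,E_n))\cong E^{n-j}(X)$ for all $n,j\ge 0$. I would also record the one homotopy-theoretic fact that $\Map_*(X,E_n)$ is fibrant, being a pointed function complex into the Kan complex~$E_n$; hence its connectedness and homotopy groups are computed by bounded quantifiers over its simplices, exactly as in the proof of Proposition~\ref{weqs}.

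The key point is that, for a fixed $n$, the statement ``$P=\Map_*(X,E_n)$'' is $\Pi_1$ with parameters $X$ and~$E$. For each $k$, the pointed simplicial set $X\wedge\Delta[k]_+$ is obtained from $X$ by the same $\Delta_0$ recipe for smash products spelled out in the proof of Theorem~\ref{homology}, with $\Delta[k]_+$ a fixed finite simplicial set definable without parameters; and ``$(P)_k$ is the set of all pointed maps $X\wedge\Delta[k]_+\to E_n$'' has the shape ``$C$ is the set of all sets with a given $\Delta_0$ property'', which is $\Pi_1$ just as the power-set operation is in Example~\ref{example3}, while the compatibility of the faces and degeneracies of $P$ with those induced from $\Delta[k]$ is $\Delta_0$. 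Since the set $\Map_*(X,E_n)$ always exists, there is exactly one $P$ with $P=\Map_*(X,E_n)$. Finally, ``$P$ is weakly contractible'' is equivalent to ``the unique map $P\to\Delta[0]$ is a weak equivalence'', which is $\Delta_1$ by part~(2) of Proposition~\ref{weqs}.

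Then I would assemble the two definitions and count quantifiers. For $\Sigma_2$: $X$ is $E^*$-acyclic iff $X\in\Ssets_*$, $E$ is an $\Omega$-spectrum, and there is a sequence $\langle P_n:n<\omega\rangle$ with $P_n=\Map_*(X,E_n)$ and $P_n$ weakly contractible for all~$n$; the matrix under the existential quantifier is a conjunction of a $\Pi_1$ clause and a $\Delta_1$ clause, hence $\Pi_1$, and ``$X\in\Ssets_*$'' and ``$E$ is an $\Omega$-spectrum'' are $\Delta_0$ and $\Delta_1$ (the latter by part~(4) of Proposition~\ref{weqs}), so the whole formula is $\Sigma_2$. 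For $\Pi_2$, replace the existential quantifier by a universal one: $X$ is $E^*$-acyclic iff $X\in\Ssets_*$, $E$ is an $\Omega$-spectrum, and for every sequence $\langle P_n:n<\omega\rangle$, if $P_n=\Map_*(X,E_n)$ for all~$n$ then $P_n$ is weakly contractible for all~$n$; now the matrix under the universal quantifier has the form $\Pi_1\to\Delta_1$, that is $\Sigma_1\vee\Pi_1$, which is $\Pi_2$, so the whole formula is $\Pi_2$. The two formulas define the same class because $\Map_*(X,E_n)$ exists and is unique for each~$n$.

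I expect the main obstacle to be conceptual rather than technical: the naive formalization ``every pointed map $X\to E_n$ is nullhomotopic, and likewise after every suspension'' uses an unbounded universal quantifier over maps, which (as the introduction explains) is genuinely non-absolute, so it cannot be kept within $\Pi_1$ or $\Delta_1$. The device that pins the class at $\Delta_2$ is to quantify once, existentially or universally, over the actual function complexes $\Map_*(X,E_n)$ as sets, paying the $\Pi_1$ cost of asserting ``this set really is the function complex'', and to push everything else into the $\Delta_1$ bookkeeping of Proposition~\ref{weqs}; the remaining care is the routine check that $\Pi_1\to\Delta_1$ stays within $\Pi_2$ and that an existential quantifier over a $\Pi_1$ matrix stays within $\Sigma_2$.
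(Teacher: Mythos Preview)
Your proof is correct, and for the $\Sigma_2$ direction it coincides with the paper's argument: both quantify existentially over the function complex $\Map_*(X,E_n)$, pay the $\Pi_1$ cost of asserting that the witness really is the set of all pointed maps $X\wedge\Delta[k]_+\to E_n$, and then invoke Proposition~\ref{weqs} to say that weak contractibility of this witness is~$\Delta_1$. The only cosmetic difference is that you quantify over the whole sequence $\langle P_n:n<\omega\rangle$ at once, while the paper writes $(\forall n<\omega)\,\exists M$; these are interchangeable since bounded quantifiers preserve the $\Sigma_2$ level.

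For the $\Pi_2$ direction the two arguments diverge. The paper goes back to the colimit description~\eqref{cohomologies} and writes out the naive condition ``for all $n,k$ and all maps $S^nX\to E_{n+k}$, some finite suspension composed with the structure maps is nullhomotopic'', which is visibly $\Pi_2$ because the unbounded universal runs over maps out of~$X$. You instead exploit the \emph{uniqueness} of $\Map_*(X,E_n)$ to turn the existential quantifier into a universal one, obtaining a $\forall\,(\Pi_1\to\Delta_1)$ formula. Your route is more uniform---both halves of the $\Delta_2$ claim rest on the same characterisation and the same lemma---and avoids the bookkeeping about iterated suspensions and structure maps in~\eqref{suspending}. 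The paper's route, on the other hand, does not need the observation that the function complex is uniquely determined, and makes the link with the original definition of $E^*$\nobreakdash-acyclicity more transparent. (Incidentally, your bound $\Sigma_1\vee\Pi_1\subseteq\Pi_2$ is correct but looser than necessary: since the ``weakly contractible'' clause is $\Delta_1$ you may take its $\Sigma_1$ form, so the matrix is already $\Sigma_1$ and the outer universal gives $\Pi_2$ directly.)
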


\begin{proof}
Let $E$ be an $\Omega$\nobreakdash-spectrum, which will be used as a parameter.
By part~(4) of Proposition~\ref{weqs}, every transitive model of ZFC containing $E$
will agree with the fact that $E$ is an $\Omega$\nobreakdash-spectrum.

A~simplicial set $X$ is $E^*$\nobreakdash-acyclic if and only if, for all $k\in\Z$ and $n\ge0$ with $n+k\ge 0$, every map
$S^n X\to E_{n+k}$ becomes nullhomotopic after suspending it a finite
number of times and composing with the structure maps of $E$ as in~\eqref{suspending}. 
This claim leads to a $\Pi_2$ formula ---note that a map $S^n X\to E_{n+k}$
is no longer determined by any finite set of simplices of~$E_{n+k}$. 
Next we show that it is possible to restate it by means of a $\Sigma_2$ formula.

A pointed simplicial set $(X,p)$ is $E^*$\nobreakdash-acyclic if and only if for all~$n<\omega$ the
simplicial set $\Map_*(X,E_n)$ is weakly contractible, assuming that $E$ is an $\Omega$\nobreakdash-spectrum.
Thus, $X$ is $E^*$\nobreakdash-acyclic if and only if the following formula holds, where we need to define
$M=\Map_*(X,E_n)$:
\[
\begin{array}{cc}
X\in\Ssets_* \, \wedge \, (\forall n<\omega)\, \exists M\, [M\in\Ssets_*
\\[0.1cm]
\wedge \,
(\forall k<\omega)\, [(\forall f\in M_k)\, f\in\Ssets_*(X\wedge\Delta[k]_+,E_n)
\\[0.1cm]
\wedge \,
\forall g\,(g\in\Ssets_*(X\wedge\Delta[k]_+,E_n)\to g\in M_k)]\,\wedge\,
\mbox{$M$ is weakly contractible}].
\end{array}
\]
According to Proposition~\ref{weqs}, this is a $\Sigma_2$ formula.
\end{proof}

In order to state and prove the next results, we use the term \emph{homotopy reflection}
(also called \emph{homotopy localization} elsewhere)
to designate a functor $L\colon\Ssets_*\to\Ssets_*$ equipped with a natural transformation
$\eta\colon{\rm Id}\to L$ which preserves
weak equivalences and becomes a reflection when passing to the homotopy category.
For a homotopy reflection~$L$, an \emph{$L$\nobreakdash-equiv\-al\-ence}
is a map $f\colon X\to Y$ such that $Lf\colon LX\to LY$ is an isomorphism in the homotopy category,
and a simplicial set $X$ is called \emph{$L$\nobreakdash-local}
if it is fibrant and weakly equivalent to $LX$ for some~$X$.

We also recall that, for a pointed map~$f\colon A\to B$,
a connected fibrant simplicial set $X$ is \emph{$f$\nobreakdash-local} if the induced map
of pointed function complexes
\[
\Map_*(f,X):\Map_*(B,X)\longrightarrow\Map_*(A,X)
\]
is a weak equivalence,
and a nonconnected $X$ is $f$\nobreakdash-local if each of its
connected components is $f$\nobreakdash-local with any choice of basepoint;
cf.~\cite[1.A.1]{Far2}.
Note that, if $X$ is $f$\nobreakdash-local for a map $f\colon A\to B$, then $f$
induces a bijection $[B,X]\cong [A,X]$, since $[B,X]$ is in natural bijective correspondence with 
the set of connected components of $\Map_*(B,X)$. Hence, being $f$\nobreakdash-local is a stronger
condition than being orthogonal to $f$ in the homotopy category.

The same terminology is used for a set or a proper
class of maps~$\Fe$; that is, a simplicial set is \emph{$\Fe$\nobreakdash-local} if it is
$f$\nobreakdash-local for all $f\in\Fe$.
An \emph{$\Fe$\nobreakdash-localization} is a homotopy
reflection $L$ such that the class of $L$\nobreakdash-local spaces coincides with
the class of $\Fe$\nobreakdash-local spaces.

\begin{lemma}
\label{flocal-slocal}
Given any class of pointed maps~$\Se$ between simplicial sets, if there is a subclass $\Fe\subseteq\Se$ such that
each element of $\Se$ is a filtered colimit of elements of~$\Fe$, then
every $\Fe$\nobreakdash-local space is $\Se$\nobreakdash-local. 
\end{lemma}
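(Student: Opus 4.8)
The plan is to fix a map $f\colon A\to B$ in $\Se$, express it as a filtered colimit $f=\colim_{k\in\Ke}d_k$ of maps $d_k\colon A_k\to B_k$ in $\Fe$ in the category of arrows of $\Ssets_*$, and show that an $\Fe$\nobreakdash-local space $X$ is $f$\nobreakdash-local; this is the homotopical analogue of Lemma~\ref{dense}. Since being $f$\nobreakdash-local is a property of each connected component separately and each component of an $\Fe$\nobreakdash-local space is again $\Fe$\nobreakdash-local, it suffices to treat the case of a connected fibrant~$X$, for which ``$X$ is $f$\nobreakdash-local'' means precisely that $\Map_*(f,X)\colon\Map_*(B,X)\to\Map_*(A,X)$ is a weak equivalence. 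Colimits in the category of arrows are computed componentwise, so $A=\colim_k A_k$ and $B=\colim_k B_k$ in $\Ssets_*$, and since the pointed function complex converts colimits in its first variable into limits, $\Map_*(f,X)$ is the map $\lim_k\Map_*(B_k,X)\to\lim_k\Map_*(A_k,X)$ induced by the maps $\Map_*(d_k,X)$, each of which is a weak equivalence of Kan complexes because $X$ is $d_k$\nobreakdash-local.

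It then remains to see that this induced map of limits is again a weak equivalence, and for this I would use two facts. First, for a fibrant~$X$ the functor $Z\mapsto\Map_*(Z,X)$ sends weak equivalences between simplicial sets to weak equivalences and sends homotopy colimits of simplicial sets to homotopy limits; both follow from the pushout--product axiom together with Ken Brown's lemma, since $\Map_*(-,X)$ is the right adjoint in a Quillen adjunction between $\Ssets_*$ and $\Ssets_*^{\mathrm{op}}$. Second, a filtered colimit of simplicial sets is a homotopy colimit, i.e.\ the canonical comparison $\mathrm{hocolim}_k B_k\to\colim_k B_k$ is a weak equivalence, and likewise for $\{A_k\}$ and for $\{d_k\}$. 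Granting these, $\Map_*(B,X)=\Map_*(\colim_k B_k,X)$ is weakly equivalent to $\mathrm{holim}_k\Map_*(B_k,X)$, similarly $\Map_*(A,X)\simeq\mathrm{holim}_k\Map_*(A_k,X)$, and under these identifications $\Map_*(f,X)$ corresponds to $\mathrm{holim}_k\Map_*(d_k,X)$; since homotopy limits preserve levelwise weak equivalences and each $\Map_*(d_k,X)$ is one, this map is a weak equivalence. Hence $X$ is $f$\nobreakdash-local, and as $f\in\Se$ was arbitrary, $X$ is $\Se$\nobreakdash-local. Equivalently, the argument shows that the class of maps $g$ for which $X$ is $g$\nobreakdash-local is closed under homotopy colimits in the category of arrows, into which filtered colimits fall; see~\cite{Far2} for related closure properties of $f$\nobreakdash-local spaces.

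The step I expect to be the main obstacle is precisely the identification of the strict filtered colimit with a homotopy colimit, equivalently the verification that $\lim_k\Map_*(B_k,X)$ has the right homotopy type even though the transition maps $B_k\to B_{k'}$ of a general filtered diagram need not be cofibrations. One clean way around it, closer to the explicit style of the rest of this section, is to replace the filtered diagram $\{d_k\}$ by a levelwise weakly equivalent one all of whose transition and structure maps are monomorphisms; this can be done without changing the colimit up to weak equivalence, because filtered colimits preserve weak equivalences in $\Ssets_*$, and afterwards $\Map_*(B_{k'},X)\to\Map_*(B_k,X)$ is a fibration, each inverse limit above is already a homotopy limit, and the argument becomes routine. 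Carrying out this reduction carefully, or else invoking directly the comparison between filtered colimits and homotopy colimits in $\Ssets$, is the only point that requires care; everything else is formal manipulation with function complexes and adjunctions.
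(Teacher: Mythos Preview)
Your proposal is correct and follows essentially the same route as the paper: reduce to connected fibrant~$X$, use that the filtered colimit $\colim_k d_k$ agrees with the homotopy colimit, and then chain the weak equivalences $\Map_*(B,X)\simeq\Map_*(\mathrm{hocolim}_k B_k,X)\simeq\mathrm{holim}_k\Map_*(B_k,X)\simeq\mathrm{holim}_k\Map_*(A_k,X)\simeq\Map_*(A,X)$. The paper handles the step you flag as the main obstacle simply by citing \cite[Lemma~5.2]{CSS} for the fact that $\mathrm{hocolim}_k f_k\to\colim_k f_k$ is a weak equivalence (since homotopy groups commute with filtered colimits), rather than passing through a cofibrant replacement of the diagram.
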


\begin{proof}
The argument is analogous to the one used in the proof of Lemma~\ref{dense}.
Let $f\colon A\to B$ be any element of $\Se$ and let $X$ be an $\Fe$\nobreakdash-local
simplicial set, which we may assume connected. Write $f=\colim\, f_k$ (in the category of pointed maps between
simplicial sets), where $f_k\colon A_k\to B_k$ is in $\Fe$ for all $k\in\Ke$, and $\Ke$ is filtered.
Now we use, as in~\cite[Lemma~5.2]{CSS}, the fact that the natural map
\[ {\rm hocolim}\, f_k\longrightarrow \colim\, f_k \] 
is a weak equivalence, since homotopy groups commute with filtered colimits
(here ${\rm hocolim}$ is a pointed homotopy colimit \cite[18.8]{Hir}).
Hence,
\begin{align*}
\Map_*(B,X) & \simeq \Map_*({\rm hocolim}\,B_k,X) \simeq {\rm holim}\,\Map_*(B_k,X) \\
& \simeq {\rm holim}\,\Map_*(A_k,X) \simeq \Map_*({\rm hocolim}\,A_k,X) \simeq \Map_*(A,X),
\end{align*}
from which it follows indeed that $X$ is $\Se$\nobreakdash-local.
\end{proof}

\begin{theorem}
\label{localizationsexist}
Assume the existence of arbitrarily large supercompact
cardinals. Then for every additive cohomology theory $E^*$
defined on simplicial sets there is a homotopy reflection $L$ such that the $L$\nobreakdash-equivalences 
are precisely the $E^*$\nobreakdash-equivalences.
\end{theorem}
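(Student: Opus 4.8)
The plan is to combine Brown representability with the complexity bound of Theorem~\ref{cohomology}, feed the resulting class into Theorem~\ref{maintheorem}, and then invoke the classical construction of localizations at sets of maps together with Lemma~\ref{flocal-slocal}.

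First I would use Brown's representability theorem (see \cite{Ada} or \cite{Rud}) to fix an $\Omega$\nobreakdash-spectrum $E$ representing the given additive cohomology theory $E^*$; from now on $E$ is used as a parameter. Next I would check that the class $\Se$ of $E^*$\nobreakdash-equivalences, viewed as a full subcategory of the category of arrows $\Ar\Ssets_*$, is {\boldmath$\Sigma_2$} with $E$ as a parameter: a pointed map $f$ is an $E^*$\nobreakdash-equivalence if and only if its mapping cone $Cf$ is $E^*$\nobreakdash-acyclic, the assignment $f\mapsto Cf$ is $\Delta_0$\nobreakdash-definable, and the class of $E^*$\nobreakdash-acyclic simplicial sets is {\boldmath$\Delta_2$} by Theorem~\ref{cohomology}; hence $\Se$ is {\boldmath$\Delta_2$}, in particular {\boldmath$\Sigma_2$}. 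As in the proof of Theorem~\ref{theorem6.3}, the category $\Ar\Ssets_*$ is an accessible category of structures, via the canonical accessible embedding of $\Ssets_*$ into a category of structures recalled in Section~\ref{cohomologicallocalizations} and Lemma~\ref{relativizing}, and complexity is preserved.

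Since there is a proper class of supercompact cardinals, Theorem~\ref{maintheorem} applies with $n=2$: it produces a dense small full subcategory $\De\subseteq\Se$ and arbitrarily large regular cardinals $\kappa$ such that every $E^*$\nobreakdash-equivalence $f$ is a $\kappa$\nobreakdash-filtered colimit in $\Ar\Ssets_*$ of elements of $\De$; in particular every $E^*$\nobreakdash-equivalence is a filtered colimit of elements of $\De$. Since $\De$ is a \emph{set} of pointed maps (which, if convenient, may first be replaced by cofibrations), the $\De$\nobreakdash-localization $L=L_{\De}$ exists by \cite{BouJPAA}, \cite{Far2}, \cite{Hir}, and is a homotopy reflection. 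By Lemma~\ref{flocal-slocal}, every $\De$\nobreakdash-local simplicial set is $\Se$\nobreakdash-local, and conversely every $\Se$\nobreakdash-local simplicial set is $\De$\nobreakdash-local because $\De\subseteq\Se$; thus the $L$\nobreakdash-local spaces are precisely the spaces that are local with respect to the class of all $E^*$\nobreakdash-equivalences.

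It remains to identify the $L$\nobreakdash-equivalences with the $E^*$\nobreakdash-equivalences. One inclusion is immediate: if $f$ is an $E^*$\nobreakdash-equivalence, then $\Map_*(f,X)$ is a weak equivalence for every $\Se$\nobreakdash-local $X$, hence for every $\De$\nobreakdash-local $X$, so $f$ is an $L$\nobreakdash-equivalence. The converse is the step I expect to be the main obstacle. The point is that each space $E_k$ is itself $\Se$\nobreakdash-local: using the $\Omega$\nobreakdash-spectrum structure one obtains natural isomorphisms $\pi_j\Map_*(Y,E_k)\cong E^{k-j}(Y)$ for all $j\ge 0$, so for any $E^*$\nobreakdash-equivalence $g$ the induced map $\Map_*(g,E_k)$ is an isomorphism on all homotopy groups and a bijection on connected components, hence a weak equivalence. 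Consequently, if $f$ is an $L$\nobreakdash-equivalence, then $\Map_*(f,E_k)$ is a weak equivalence for every $k$, which forces $E^m(f)$ to be an isomorphism of abelian groups for every $m\in\Z$; that is, $f$ is an $E^*$\nobreakdash-equivalence. This establishes that the class of $L$\nobreakdash-equivalences coincides with the class of $E^*$\nobreakdash-equivalences, as required.
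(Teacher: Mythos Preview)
Your proposal is correct and follows essentially the same line as the paper's proof: represent $E^*$ by an $\Omega$\nobreakdash-spectrum, bound the complexity of the class of $E^*$\nobreakdash-equivalences by {\boldmath$\Sigma_2$} via Theorem~\ref{cohomology}, apply Theorem~\ref{maintheorem} to extract a small set $\De$ of $E^*$\nobreakdash-equivalences, localize at that set, and identify the two classes of equivalences using Lemma~\ref{flocal-slocal} together with the fact that each $E_k$ is local. The only detail the paper makes explicit that you leave implicit is that $E_k$ need not be connected, so one uses that $E_k$ is a loop space (all components share the same homotopy type) to conclude that all of $E_k$, not merely its basepoint component, is local.
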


\begin{proof}
Let $\Se$ be the class of $E^*$\nobreakdash-equivalences for a given
additive cohomology theory~$E^*$, and view it as a full
subcategory of the category of pointed maps between simplicial sets,
which is accessibly embedded into a category of structures, by Lemma~\ref{relativizing}.
Since the class of $E^*$\nobreakdash-equivalences coincides with the class of
maps whose mapping cone is $E^*$\nobreakdash-acyclic, Theorem~\ref{cohomology}
tells us that $\Se$ is~{\boldmath$\Delta_2$}, hence~{\boldmath$\Sigma_2$}.
Consequently, it follows from Theorem~\ref{maintheorem} that
there is a regular cardinal $\kappa$ and a set $\Fe$ of
$E^*$\nobreakdash-equivalences
such that every $E^*$\nobreakdash-equivalence is a $\kappa$\nobreakdash-filtered colimit 
of elements of~$\Fe$ in the category of pointed maps between simplicial sets.

To conclude the proof, let $f\colon A\to B$ be the coproduct of all the elements of~$\mathcal{F}$,
and let $L$ be $f$\nobreakdash-localization, as constructed in~\cite{BouJPAA}, \cite{Far2}
or~\cite{Hir}. Since all the elements of
$\mathcal{F}$ are $E^*$\nobreakdash-equivalences and $E^*$ is additive, $f$ is an $E^*$\nobreakdash-equivalence. 

Let $E$ be an $\Omega$\nobreakdash-spectrum representing~$E^*$. Since $f$ is an $E^*$\nobreakdash-equivalence, 
it induces bijections
$[B,E_n]\cong[A,E_n]$ for all~$n$, and in fact weak equivalences $\Map_*(B,E_n)\simeq\Map_*(A,E_n)$
for all~$n$. In other words, the basepoint component of $E_n$ is $f$\nobreakdash-local for all~$n$.
Since $E_n$ is a loop space, all its connected components have the same homotopy type and
therefore $E_n$ itself is $f$\nobreakdash-local for all~$n$. It follows that every $L$\nobreakdash-equivalence
$g\colon X\to Y$ induces a weak equivalence $\Map_*(Y,E_n)\simeq\Map_*(X,E_n)$ for all~$n$, and we conclude that all $L$\nobreakdash-equivalences are $E^*$\nobreakdash-equivalences.

Conversely, every $E^*$\nobreakdash-equivalence is, as said above, a $\kappa$\nobreakdash-filtered
colimit of objects from~$\Fe$. According to Lemma~\ref{flocal-slocal}, 
every $L$\nobreakdash-local simplicial set is $E^*$\nobreakdash-local, and therefore
all $E^*$\nobreakdash-equivalences are $L$\nobreakdash-equivalences. This completes the argument.
\end{proof}

What we have proved is that localization with respect to any additive cohomology
theory exists on the homotopy category of simplicial sets if arbitrarily
large supercompact cardinals exist. This is a substantial improvement of 
\cite[Corollary~5.4]{CSS}, where it was proved that the existence of 
cohomological localizations follows from~Vop\v{e}nka's principle. 

We also emphasize that from Theorem~\ref{homology} it follows, by a
similar method as in the proof of Theorem~\ref{localizationsexist} (or using
Theorem~\ref{invertingclassesofmaps} below), that the existence
of \emph{homological} localizations (for representable homology theories) 
is provable in~ZFC. Bousfield did it indeed in~\cite{BouTop}.

The same line of argument provides an answer to Farjoun's question in \cite{Far1} of whether
all homotopy reflections are $f$\nobreakdash-local\-izations for some map~$f$. It was shown in \cite{CSS}
that the answer is affirmative under Vop\v{e}nka's principle, and Prze\'{z}dziecki proved in \cite{Prz}
that an affirmative answer is in fact equivalent to Vop\v{e}nka's principle.
Here we prove an analogue of Theorem~\ref{corollary3.4}.

\begin{theorem}
\label{farjoun}
A homotopy reflection $L$ on simplicial sets is an $f$\nobreakdash-localiza\-tion for some map $f$ under 
any of the following assumptions:
\begin{itemize}
\item[(1)]
The class of $L$\nobreakdash-equivalences is definable with sufficiently low complexity.
\item[(2)]
The class of $L$\nobreakdash-local simplicial sets is~{\boldmath$\Sigma_{n+1}$}
for $n\ge 2$ and there is a proper class of $\Cn$\nobreakdash-extendible cardinals.
\end{itemize}
\end{theorem}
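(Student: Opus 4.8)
The plan is to adapt the proof of Theorem~\ref{corollary3.4} to the homotopy category, the point in each case being to extract from the hypotheses a \emph{set} $\Fe$ of pointed maps such that a simplicial set is $L$\nobreakdash-local if and only if it is $\Fe$\nobreakdash-local; the single map $f=\coprod_{d\in\Fe}d$ (coproduct in $\Ar\Ssets_*$, i.e.\ the wedge of the sources mapped to the wedge of the targets) then witnesses that $L$ is an $f$\nobreakdash-localization, since $\Map_*(\coprod_{d}A_d,Y)\cong\prod_{d}\Map_*(A_d,Y)$ shows that being $f$\nobreakdash-local coincides with being $d$\nobreakdash-local for every $d\in\Fe$. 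Throughout I would use that $\Ar\Ssets_*$ is $\Delta_0$, locally presentable and accessibly embedded into a category of structures with complexity preserved (Lemma~\ref{relativizing}); that, since $L$ preserves weak equivalences, a fibrant simplicial set is $L$\nobreakdash-local exactly when it is local, in the function-complex sense, with respect to every $L$\nobreakdash-equivalence (cf.~\cite[\S 1.A]{Far2}); that every map $\eta_Y\colon Y\to LY$ is an $L$\nobreakdash-equivalence; and that $L$\nobreakdash-local simplicial sets are closed under homotopy retracts. Since $f$\nobreakdash-localizations of simplicial sets exist (\cite{BouJPAA}, \cite{Far2}, \cite{Hir}), it suffices in each case to identify the class of $L$\nobreakdash-local spaces with the class of $f$\nobreakdash-local spaces for the map $f$ produced below.

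For~(1), view the class $\Se$ of $L$\nobreakdash-equivalences as a full subcategory of $\Ar\Ssets_*$. As $\Se$ is definable with sufficiently low complexity, Theorem~\ref{maintheorem} supplies a regular cardinal $\kappa$ and a dense small full subcategory $\De\subseteq\Se$ with every $L$\nobreakdash-equivalence a $\kappa$\nobreakdash-filtered colimit of elements of $\De$ in the category of pointed maps; set $f=\coprod_{d\in\De}d$. If $Y$ is $f$\nobreakdash-local then it is $d$\nobreakdash-local for all $d\in\De$, hence $g$\nobreakdash-local for every $L$\nobreakdash-equivalence $g$ by Lemma~\ref{flocal-slocal}; applied to $g=\eta_Y$ this gives a bijection $[LY,Y]\cong[Y,Y]$, so the identity of $Y$ factors through $\eta_Y$ up to homotopy and $Y$ is a homotopy retract of the $L$\nobreakdash-local space $LY$, whence $Y$ is $L$\nobreakdash-local. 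Conversely, an $L$\nobreakdash-local $Y$ is $d$\nobreakdash-local for every $d\in\De\subseteq\Se$ by the function-complex characterization, hence $f$\nobreakdash-local. Thus the $f$\nobreakdash-local simplicial sets are precisely the $L$\nobreakdash-local ones, so $L$ is an $f$\nobreakdash-localization.

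For~(2), let $\Se$ be the class of $L$\nobreakdash-local simplicial sets (which is {\boldmath$\Sigma_{n+1}$} by hypothesis) and let $\Le$ be the class of pointed maps $g$ such that every object of $\Se$ is $g$\nobreakdash-local. I would first bound the complexity of $\Le$: exactly as in the proof of Theorem~\ref{cohomology}, the function complexes $\Map_*(B,Y)$ and $\Map_*(A,Y)$ can be pinned down by formulas with only bounded quantifiers, and by Proposition~\ref{weqs} the assertion that the map between them induced by $g$ is a weak equivalence is $\Delta_1$; hence ``$Y$ is $g$\nobreakdash-local'' is $\Delta_2$, and ``$g\in\Le$'', being of the form $\forall Y\,(Y\in\Se\to\theta)$ for a $\Delta_2$ formula $\theta$, is {\boldmath$\Pi_{n+1}$}, in particular {\boldmath$\Sigma_{n+2}$}. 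Since there is a proper class of $\Cn$\nobreakdash-extendible cardinals and $n\ge 2$, Theorem~\ref{maintheorem} (applied to $\Le$ through the embedding of Lemma~\ref{relativizing}) gives a regular cardinal $\kappa$ and a dense small $\De\subseteq\Le$ with every element of $\Le$ a $\kappa$\nobreakdash-filtered colimit of elements of $\De$; set $f=\coprod_{d\in\De}d$. By Lemma~\ref{flocal-slocal} every $f$\nobreakdash-local (equivalently $\De$\nobreakdash-local) space is $\Le$\nobreakdash-local, and conversely, since $\De\subseteq\Le$, every $\Le$\nobreakdash-local space is $\De$\nobreakdash-local; so the $f$\nobreakdash-local and $\Le$\nobreakdash-local classes agree. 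Every $L$\nobreakdash-local $Y$ lies in the $\Le$\nobreakdash-local class by the definition of $\Le$; and conversely $\eta_Y$ belongs to $\Le$ (being an $L$\nobreakdash-equivalence, against which $L$\nobreakdash-local spaces are local in the function-complex sense), so a $\Le$\nobreakdash-local $Y$ is $\eta_Y$\nobreakdash-local, hence a homotopy retract of the $L$\nobreakdash-local space $LY$, hence $L$\nobreakdash-local. Therefore the $f$\nobreakdash-local simplicial sets are exactly the $L$\nobreakdash-local ones, so $L$ is an $f$\nobreakdash-localization.

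The step I expect to be the main obstacle is the complexity bookkeeping in~(2): one must spell out, in the manner of the proof of Theorem~\ref{cohomology}, a $\Delta_2$ formalization of ``$\Map_*(g,Y)$ is a weak equivalence'', and then check that quantifying universally over the {\boldmath$\Sigma_{n+1}$} class $\Se$ of $L$\nobreakdash-local spaces does not push $\Le$ beyond {\boldmath$\Sigma_{n+2}$} (this is what forces $n\ge 2$ together with the hypothesis on $\Cn$\nobreakdash-extendible cardinals). A further point requiring care is the interface between $L$\nobreakdash-locality and the orthogonality conditions used here: one needs that $L$\nobreakdash-locality is detected on function complexes and not merely on the morphism sets of the homotopy category---which uses that $L$ preserves weak equivalences---and that $L$\nobreakdash-local simplicial sets are closed under homotopy retracts, both of which enter each ``$f$\nobreakdash-local $\Rightarrow$ $L$\nobreakdash-local'' argument.
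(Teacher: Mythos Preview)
Your proposal is correct and follows essentially the same route as the paper: in both cases one applies Theorem~\ref{maintheorem} to (a suitable description of) the class of $L$\nobreakdash-equivalences inside $\Ar\Ssets_*$, takes the coproduct $f$ of the resulting small dense set, and then matches $f$\nobreakdash-local with $L$\nobreakdash-local using Lemma~\ref{flocal-slocal} and the function-complex characterization of $L$\nobreakdash-locality. The only cosmetic difference is that in~(2) the paper reduces to~(1) by showing the $L$\nobreakdash-equivalences are {\boldmath$\Pi_{n+1}$} via the hom-set bijection $[B,X]\cong[A,X]$, whereas you reach the same complexity bound through the (equivalent) function-complex description $\Le$; your retract argument via $\eta_Y$ makes explicit a step the paper leaves implicit.
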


\begin{proof}
For (1), we may choose, by~Theorem~\ref{maintheorem}, a set $\Fe$ of $L$\nobreakdash-equivalences such that 
every $L$\nobreakdash-equivalence is a filtered colimit of elements of~$\Fe$ in the category of pointed
maps between simplicial sets.
Let $f$ be the coproduct of all the elements of~$\Fe$.
Then $f$ is an $L$\nobreakdash-equivalence, since the class of $L$\nobreakdash-equivalences
is closed under coproducts. Therefore, every $L$\nobreakdash-local simplicial set is $f$\nobreakdash-local,
by \cite[Corollary~4.4]{CSS}. Conversely, every $f$\nobreakdash-local simplicial set is $L$\nobreakdash-local
by Lemma~\ref{flocal-slocal}.

In order to prove~(2), note that, if the class of $L$\nobreakdash-local simplicial sets is 
{\boldmath$\Sigma_{n+1}$},
then the class of $L$\nobreakdash-equivalences
is {\boldmath$\Pi_{n+1}$},
since $f\colon A\to B$ is an $L$\nobreakdash-equivalence if and only if the induced function 
$[B,X]\rightarrow [A,X]$
is a bijection for each $L$\nobreakdash-local space~$X$, which can be formalized as
\[
\begin{array}{c}
\forall X\,\forall g\, [(\text{$X$ is an $L$\nobreakdash-local simplicial set}\,\wedge\,
g\in\Ssets_*(A,X))\to \\[0.1cm]
(\exists h\,(h\in\Ssets_*(B,X) \wedge h\circ f\simeq g)\,\wedge \,
\text{any two such maps are homotopic})].
\end{array}
\]
The statement ``any two such maps are homotopic'' can be formally written as a $\Pi_2$ formula.
Hence the same argument as in part~(1) applies under the
assumption that a proper class of $\Cn$\nobreakdash-extendible cardinals exists,
by means of Theorem~\ref{maintheorem}.
\end{proof}

The corresponding analogue of Theorem~\ref{corollary6.5} is the next result.
Localization with respect to proper classes of maps was shown to exist in \cite{Ch1}
under restrictive conditions.

\begin{theorem}
\label{invertingclassesofmaps}
Let $\Se$ be any (possibly proper) class of maps of simplicial sets.
If $\Se$ is definable with sufficiently low complexity, then an $\Se$\nobreakdash-localization exists.
\end{theorem}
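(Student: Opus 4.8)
The plan is to mimic the proof of Theorem~\ref{corollary6.5}, with orthogonality replaced by $f$\nobreakdash-locality and with Lemma~\ref{flocal-slocal} playing the role of Lemma~\ref{dense}. First I would view $\Se$ as a full subcategory of the category $\Ar\Ssets_*$ of pointed maps between simplicial sets. As already used in the proofs of Theorems~\ref{localizationsexist} and~\ref{farjoun}, the category $\Ssets_*$ is locally presentable and $\Delta_0$, and by Lemma~\ref{relativizing} its arrow category embeds fully and accessibly into a category of structures in a way that preserves complexity; thus $\Se$ becomes a {\boldmath$\Sigma_n$} full subcategory of an accessible category of structures, where the value of $n$ and the accompanying large-cardinal hypothesis are exactly those packaged in the phrase ``definable with sufficiently low complexity''. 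Theorem~\ref{maintheorem} therefore applies and produces a dense small full subcategory $\De\subseteq\Se$ together with a regular cardinal $\kappa$ such that, for every $f\in\Se$, the object $f$ is a $\kappa$\nobreakdash-filtered colimit of objects of $\De$, computed both in $\Se$ and in $\Ar\Ssets_*$.

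Next I would let $f\colon A\to B$ be the coproduct (wedge) of the small set of all the morphisms belonging to $\De$, and take $L$ to be the $f$\nobreakdash-localization functor, which exists by \cite{BouJPAA}, \cite{Far2}, or~\cite{Hir}. It then remains to check that the $L$\nobreakdash-local simplicial sets are precisely the $\Se$\nobreakdash-local ones. For one direction, a fibrant simplicial set is $f$\nobreakdash-local if and only if it is $g$\nobreakdash-local for every $g\in\De$, since $\Map_*(-,X)$ carries a wedge of domains to the corresponding product of function complexes, so the $f$\nobreakdash-locality weak equivalence decomposes as the product of the $g$\nobreakdash-locality weak equivalences; hence the $L$\nobreakdash-local spaces are exactly the $\De$\nobreakdash-local spaces. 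Because every element of $\Se$ is a filtered colimit of elements of $\De\subseteq\Se$, Lemma~\ref{flocal-slocal} shows that every $\De$\nobreakdash-local space is $\Se$\nobreakdash-local; and conversely every $\Se$\nobreakdash-local space is $\De$\nobreakdash-local since $\De\subseteq\Se$. Therefore the $L$\nobreakdash-local spaces coincide with the $\Se$\nobreakdash-local spaces, so $L$ is an $\Se$\nobreakdash-localization, as required.

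The only delicate point is the reduction to the arrow category: one has to make sure that ``$\Se$ is definable with sufficiently low complexity'' as a class of maps yields the same complexity bound for $\Se$ regarded as a full subcategory of $\Ar\Ssets_*$ sitting inside a category of structures (this is precisely the content of Lemma~\ref{relativizing}), and that the $\kappa$\nobreakdash-filtered colimit exhibiting $f$ is genuinely taken in $\Ar\Ssets_*$, so that Lemma~\ref{flocal-slocal} may be invoked with an honest filtered colimit of pointed maps. Both are handled exactly as in the proofs of Theorems~\ref{localizationsexist} and~\ref{farjoun}; the remainder is a routine assembly of Theorem~\ref{maintheorem}, Lemma~\ref{flocal-slocal}, and the standard construction of $f$\nobreakdash-localizations.
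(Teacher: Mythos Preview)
Your proposal is correct and follows essentially the same route as the paper: apply Theorem~\ref{maintheorem} to $\Se$ viewed inside $\Ar\Ssets_*$ to extract a set $\De\subseteq\Se$ such that every map in $\Se$ is a filtered colimit of maps in $\De$, then invoke Lemma~\ref{flocal-slocal} to conclude that $\De$\nobreakdash-local and $\Se$\nobreakdash-local coincide. The only cosmetic difference is that the paper stops at the set $\Fe=\De$ and cites the existence of $\Fe$\nobreakdash-localization for a \emph{set} $\Fe$, whereas you take one extra (harmless) step and pass to the coproduct $f$ of the maps in $\De$, as in the proofs of Theorems~\ref{localizationsexist} and~\ref{farjoun}; the two formulations are equivalent.
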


\begin{proof}
Theorem~\ref{maintheorem} implies that there is a set $\Fe\subseteq\Se$ such that
every $f\in\Se$ is a filtered colimit of elements of~$\Fe$.
Then $\Fe$\nobreakdash-localization exists since $\Fe$ is a set, and
every $\Fe$\nobreakdash-local simplicial set is $\Se$\nobreakdash-local
by Lemma~\ref{flocal-slocal}. Since $\Fe\subseteq\Se$, all $\Se$\nobreakdash-local simplicial sets are $\Fe$\nobreakdash-local, so the proof is complete.
\end{proof}

\section{Bergman's question}
\label{Bergmanquestion}

If $\Sigma$ is a finitary operational signature, then
$\Sigma$\nobreakdash-structures are \emph{universal algebras}.
If $\Ce$ is a full subcategory of ${\bf Str}\,\Sigma$ and $n$ is a nonnegative
integer, an \emph{$n$\nobreakdash-ary implicit operation} $f$ on $\Ce$ is a 
natural transformation from the $n$\nobreakdash-fold product functor to the identity functor; that is,
a collection of maps $f_X\colon X^n\to X$
indexed by objects $X$ of $\Ce$ such that the square
\[
\xymatrix@=3pc{
X^n \ar[r]^{h^n} \ar[d]_{f_X} & Y^n \ar[d]^{f_Y} \\
X \ar[r]^{h} & Y
}
\]
commutes for each homomorphism $h\colon X\to Y$. Such implicit operations are very useful in finite universal 
algebra; see~\cite{A}. If $\Ce$ is a proper class with no homomorphisms except identities, then 
each collection $\{f_X\}_{X\in{\Ce}}$ is an implicit operation. Thus, assuming the negation of 
Vop\v{e}nka's principle, there is a proper class of implicit operations on~$\Ce$. In connection 
with~\cite{B}, Bergman asked whether this can happen assuming Vop\v{e}nka's principle.

\begin{theorem}
\label{th8.1}
For a finitary operational signature $\Sigma$,
Vop\v{e}nka's principle implies that there is only a set of implicit operations on each full subcategory of ${\bf Str}\,\Sigma$.
\end{theorem}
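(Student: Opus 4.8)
The plan is to fix the arity and reduce everything to a single ``smallness'' property of an auxiliary category. Since every implicit operation has a well-defined arity $n<\omega$ and a countable union of sets is a set, it suffices to show that for each~$n$ the collection $\mathcal{I}_n$ of $n$-ary implicit operations on~$\Ce$ is a set. To this end, I would introduce the category $\mathbf{P}_n$ of \emph{$n$-pointed objects of~$\Ce$}: objects are pairs $(X,\bar a)$ with $X\in\Ce$ and $\bar a=(a_1,\dots,a_n)\in X^n$, and a morphism $(X,\bar a)\to(Y,\bar b)$ is a morphism $h\in\Ce(X,Y)$ with $h(a_k)=b_k$ for all~$k$. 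The decisive reduction is: \emph{if $\mathbf{P}_n$ has a weakly initial set}, that is, a \emph{set} $\mathcal{G}=\{(Z_i,\bar c_i):i\in I\}$ of objects such that every object of $\mathbf{P}_n$ receives a morphism from some member of~$\mathcal{G}$, \emph{then $\mathcal{I}_n$ is a set}. Indeed, naturality forces an $n$-ary implicit operation $f$ to be completely determined by the single elements $f_{Z_i}(\bar c_i)\in Z_i$: for any $(X,\bar a)$, choosing $h\colon(Z_i,\bar c_i)\to(X,\bar a)$ in $\mathbf{P}_n$ gives $f_X(\bar a)=f_X\bigl(h(\bar c_i)\bigr)=h\bigl(f_{Z_i}(\bar c_i)\bigr)$, so that $f\mapsto\bigl(f_{Z_i}(\bar c_i)\bigr)_{i\in I}$ is an injection of $\mathcal{I}_n$ into the set $\prod_{i\in I}Z_i$.

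It therefore remains to derive, from Vop\v{e}nka's principle, that $\mathbf{P}_n$ has a weakly initial set. (One could quote the known fact that, under Vop\v{e}nka's principle, every full subcategory of a locally presentable category is bounded, hence has a small dense --- in particular weakly initial --- subcategory, cf.\ \cite[Ch.\,6]{AR}; but the direct argument is short.) Suppose not. For each ordinal~$\alpha$ the objects of $\mathbf{P}_n$ of rank $<\alpha$ form a set~$\mathcal{S}_\alpha$ (here we use that $\Sigma$ is a set), which by hypothesis is not weakly initial, so there is an \emph{$\alpha$-bad} object, i.e.\ one receiving no morphism from any member of~$\mathcal{S}_\alpha$. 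An $\alpha$-bad object has rank $\ge\alpha$ (else it would receive its own identity), and every $\alpha'$-bad object is $\alpha$-bad when $\alpha\le\alpha'$; hence $\mu_\alpha:=$ the least rank of an $\alpha$-bad object is nondecreasing in~$\alpha$ and satisfies $\mu_\alpha\ge\alpha$. Now define $\alpha_0=0$, $\alpha_{\xi+1}=\mu_{\alpha_\xi}+1$, $\alpha_\lambda=\sup_{\xi<\lambda}\alpha_\xi$ for limit~$\lambda$, and put $\mu^{\ast}_\xi:=\mu_{\alpha_\xi}$; then $\mu^{\ast}_\xi<\alpha_{\xi+1}\le\alpha_\eta\le\mu^{\ast}_\eta$ whenever $\xi<\eta$, so $\xi\mapsto\mu^{\ast}_\xi$ is strictly increasing. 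Using a global well-ordering, fix for each~$\xi$ an $\alpha_\xi$-bad object $(X_\xi,\bar a_\xi)$ of rank exactly~$\mu^{\ast}_\xi$.

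Next I would encode each $(X_\xi,\bar a_\xi)$, together with an orientation tag, as a structure of one fixed signature~$\Sigma^{\dagger}$ and apply VP. Take $\Sigma^{\dagger}$ two-sorted: one sort carrying all operation symbols of~$\Sigma$ plus $n$ new constant symbols, and a second, disjoint sort carrying one binary relation symbol~$E$ and one constant symbol~$d$. Let $P_\xi$ be the $\Sigma^{\dagger}$-structure whose first sort is $X_\xi$ with the new constants interpreted by the components of~$\bar a_\xi$, and whose second sort is $(\mu^{\ast}_\xi+1,\in)$ with $d$ interpreted as~$\mu^{\ast}_\xi$. Since $\mu^{\ast}$ is strictly increasing, the $P_\xi$ are pairwise nonisomorphic, so $\{P_\xi:\xi\in\mathrm{Ord}\}$ is a proper class of structures of the same type. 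By VP there are $\xi\ne\eta$ and an elementary embedding $e\colon P_\xi\to P_\eta$. On the second sort, $e$ maps each ordinal $<\mu^{\ast}_\xi$ strictly increasingly into the ordinals $<\mu^{\ast}_\eta$ and sends $\mu^{\ast}_\xi$ to~$\mu^{\ast}_\eta$, which forces $\mu^{\ast}_\xi\le\mu^{\ast}_\eta$, hence $\xi<\eta$. On the first sort, $e$ restricts to a homomorphism of $\Sigma$-structures $X_\xi\to X_\eta$ preserving the $n$ new constants; as $\Ce$ is full, this is a morphism $(X_\xi,\bar a_\xi)\to(X_\eta,\bar a_\eta)$ in~$\mathbf{P}_n$. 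But $(X_\eta,\bar a_\eta)$ is $\alpha_\eta$-bad while $\mathrm{rank}(X_\xi,\bar a_\xi)=\mu^{\ast}_\xi<\alpha_{\xi+1}\le\alpha_\eta$, a contradiction. Hence $\mathbf{P}_n$ has a weakly initial set, so $\mathcal{I}_n$ is a set for every~$n$, and the class of all implicit operations on~$\Ce$ is a set.

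The step I expect to be delicate is the middle one: one must realize that an implicit operation, though indexed by the proper class of objects of~$\Ce$, is nonetheless pinned down by only set-many of its values, and then manufacture out of a putative counterexample a proper class of same-type structures on which VP yields a morphism forbidden by a rank count. As always in such arguments, controlling the \emph{direction} of the elementary embedding is the subtle point; here this is arranged by the auxiliary $\in$-sort, exactly as in the proof of Theorem~\ref{proposition000}, and --- again as there --- making the choice of the witnesses $(X_\xi,\bar a_\xi)$ canonical requires a global well-ordering.
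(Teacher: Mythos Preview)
Your proof is correct and takes a genuinely different route from the paper's. The paper's argument is a two-liner: it cites \cite{AR2} for the fact that, under Vop\v{e}nka's principle, there is a regular cardinal $\kappa$ and a \emph{set} $\Ae\subseteq\Ce$ such that every object of $\Ce$ is a $\kappa$-filtered colimit of objects of~$\Ae$; since the forgetful functor ${\bf Str}\,\Sigma\to\Sets^S$ and the finite power functor $(-)^n$ on $\Sets^S$ preserve such colimits, each $f_X$ is determined by the family $\{f_A\}_{A\in\Ae}$, whence there is only a set of implicit operations. Your reduction is to a different smallness property --- a weakly initial set in the pointed category $\mathbf{P}_n$ rather than a generating set in $\Ce$ itself --- and you obtain it by a direct, self-contained application of VP to a hand-built proper class, rather than by quoting the preaccessibility result of~\cite{AR2}. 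What the paper's approach buys is brevity and a single set $\Ae$ that works for every arity at once; what yours buys is transparency about exactly where VP enters, making explicit the rank bookkeeping and the direction-control device (the auxiliary $\in$-sort with a named top element) that underlie the cited result. Your caveat about global choice is apt and honest: since VP is formulated here as a schema on definable classes, the canonical choice of the witnesses $(X_\xi,\bar a_\xi)$ does need something beyond plain ZFC, exactly as the paper itself flags in the paragraph following the statement of~VP. One minor point: if $\Sigma$ is genuinely many-sorted, the tuples $\bar a$ live in a fixed $X_s^n$ and the constants added to $\Sigma$ must carry a sort, so $\mathbf{P}_n$ should be run sort by sort; this changes nothing essential in your argument.
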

\begin{proof}
Let $\Ce$ be a full subcategory of ${\bf Str}\,\Sigma$, where $\Sigma$ is $S$\nobreakdash-sorted. By~\cite{AR2}, Vop\v{e}nka's principle~implies 
that there is a regular
cardinal $\kappa$ and a set $\Ae$ of objects in $\Ce$ such that each object of $\Ce$ 
is a $\kappa$\nobreakdash-filtered colimit of objects of~$\Ae$. Since the forgetful functor 
${\bf Str}\,\Sigma \to\Sets^S$
and the $n$\nobreakdash-fold product functor $(-)^n\colon \Sets^S\to\Sets^S$ preserve colimits, each implicit operation
$f_X$ with $X\in\Ce$ is uniquely determined 
by $\{f_A\}_{A\in{\Ae}}$. Hence there is only a set 
of distinct implicit operations on~$\Ce$.
\end{proof} 

We improve this result as follows.

\begin{theorem}
\label{th8.2}
For a finitary operational signature~$\Sigma$,
every full subcategory $\Se$ of ${\bf Str}\,\Sigma$ definable with sufficiently low complexity
has only a set of implicit operations.
\end{theorem}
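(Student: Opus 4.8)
The plan is to mimic the proof of Theorem~\ref{th8.1}, but to replace the appeal to Vop\v{e}nka's principle (via~\cite{AR2}) with an appeal to Theorem~\ref{maintheorem}, exactly as was done in passing from results provable under VP to results provable under the weaker large-cardinal hypotheses throughout Section~\ref{smallorthogonalityclasses}. The key observation is that an $n$\nobreakdash-ary implicit operation on a full subcategory $\Se$ of ${\bf Str}\,\Sigma$ is determined, as a natural transformation, by its values on a dense subcategory of $\Se$, since the $n$\nobreakdash-fold product functor and the forgetful functor ${\bf Str}\,\Sigma\to\Sets^S$ preserve colimits (as recalled in the proof of Theorem~\ref{th8.1}).

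First I would note that ${\bf Str}\,\Sigma$ is locally presentable, hence accessible, and that since $\Sigma$ is finitary the category ${\bf Str}\,\Sigma$ is {\boldmath$\Delta_1$} with $\Sigma$ as a parameter, by Proposition~\ref{nonabsolute}. In particular ${\bf Str}\,\Sigma$ is an accessible category of structures in which $\Se$, being definable with sufficiently low complexity, sits as a full subcategory satisfying the hypotheses of Theorem~\ref{maintheorem}. Applying that theorem yields a dense small full subcategory $\De\subseteq\Se$ and a regular cardinal $\kappa$ such that, for every $Y\in\Se$, the object $Y$ is the colimit of the canonical diagram $(\De\downarrow Y)\to\Se$ (equivalently $(\De\downarrow Y)\to{\bf Str}\,\Sigma$), with $(\De\downarrow Y)$ being $\kappa$\nobreakdash-filtered.

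Next I would argue that an implicit operation $f=\{f_X\}_{X\in\Se}$ is uniquely determined by its restriction $\{f_A\}_{A\in\De}$ to the small subcategory~$\De$. Indeed, fix $Y\in\Se$ and write $Y=\colim_{\langle A,u\rangle}A$ over $(\De\downarrow Y)$; since the forgetful functor to $\Sets^S$ creates colimits and $(-)^n\colon\Sets^S\to\Sets^S$ preserves them (colimits in $\Sets^S$ are computed sortwise, and finite products of sets commute with filtered colimits), we have $Y^n=\colim_{\langle A,u\rangle}A^n$. By naturality of $f$, for each $\langle A,u\rangle$ the square with top edge $u^n\colon A^n\to Y^n$, left edge $f_A$, right edge $f_Y$ and bottom edge $u\colon A\to Y$ commutes; hence $f_Y$ is the unique morphism $Y^n\to Y$ induced on the colimit by the cocone $\{u\circ f_A\}_{\langle A,u\rangle}$. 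Thus $f_Y$ is completely determined by $\{f_A\}_{A\in\De}$, and since $Y$ was arbitrary the whole natural transformation $f$ is determined by its values on~$\De$. As $\De$ is small and each $f_A$ is an element of the set $\Se(A^n,A)$, there are at most $\prod_{A\in\De}|\Se(A^n,A)|$ many $n$\nobreakdash-ary implicit operations, which is a set; summing over $n<\omega$ gives a set of implicit operations in total.

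The main obstacle is simply making sure the hypotheses of Theorem~\ref{maintheorem} are met: one must check that ${\bf Str}\,\Sigma$ qualifies as an ``accessible category of structures'' in the sense fixed in Section~\ref{accessiblecategories} (it does, trivially, being accessibly embedded into itself), and that $\De$ can be taken so that every $Y\in\Se$ is a colimit of objects of $\De$ \emph{in} $\Se$ (this is part of the conclusion of Theorem~\ref{maintheorem}). The only genuinely delicate point is the colimit-preservation argument for $(-)^n$: one needs that the relevant diagram $(\De\downarrow Y)$ is filtered (which Theorem~\ref{maintheorem} guarantees, being $\kappa$\nobreakdash-filtered) so that finite products commute with the colimit in $\Sets^S$; for a general diagram this would fail, but for a filtered one it is standard. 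No new large-cardinal input beyond what is already packaged in ``definable with sufficiently low complexity'' is required.
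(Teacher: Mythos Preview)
Your proposal is correct and follows essentially the same approach as the paper: both invoke Theorem~\ref{maintheorem} (the paper refers back to its proof, you to its statement) to exhibit a small set $\De\subseteq\Se$ such that every object of $\Se$ is a $\kappa$\nobreakdash-filtered colimit of objects of~$\De$, and then conclude exactly as in Theorem~\ref{th8.1}; your remark that filteredness is what makes $(-)^n$ commute with the colimit is a welcome clarification that the paper glosses over. One inconsequential slip: $f_A$ is only an $S$\nobreakdash-sorted function $A^n\to A$, not an element of $\Se(A^n,A)$ (it need not be a homomorphism, and $A^n$ need not lie in~$\Se$), but it still ranges over a set, which is all your counting argument needs.
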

\begin{proof}
As shown in the proof of Theorem~\ref{maintheorem}, for each object $Y$ of~$\Se$
the slice category $(\Se\cap H(\kappa)\downarrow Y)$ 
is cofinal in $(\Ke\downarrow Y)$ for some regular 
cardinal~$\kappa$, where $\Ke$ is the (essentially small) class
of $\kappa$\nobreakdash-presentable objects in ${\bf Str}\,\Sigma$. Thus each object
of $\Se$ is a $\kappa$\nobreakdash-filtered colimit of objects from the set
$\Se\cap H(\kappa)$. The rest is the same as in the proof of Theorem~\ref{th8.1}.
\end{proof}

\bigskip

{\footnotesize

\noindent
Joan Bagaria,
ICREA (Instituci\'o Catalana de Recerca i Estudis Avan\c{c}ats) and
Departament de L\`ogica, Hist\`oria i Filosofia de la Ci\`encia, Universitat de Barcelona,
Montalegre~6, 08001~Barcelona, Spain,
{\tt joan.bagaria@icrea.cat; bagaria@ub.edu}.

\bigskip

\noindent
Carles Casacuberta,
Departament d'\`Algebra i Geometria and Institut de Matem\`atica, 
Universitat de Barcelona,
Gran Via de les Corts Catalanes~585, 08007~Barcelona, Spain,
{\tt carles.casacuberta@ub.edu}.

\bigskip

\noindent
A. R. D. Mathias,
ERMIT, Universit\'e de la R\'eunion,
UFR Sciences et Technologies,
Labo\-ratoire d'Informatique et de Math\'ematiques,
2~rue~Joseph Wetzel, B\^atiment~2,
F\nobreakdash-97490 Sainte Clotilde, France outre-mer,
{\tt ardm@univ-reunion.fr; ardm@dpmms.cam.ac.uk}.

\bigskip

\noindent
Ji\v{r}\'{\i} Rosick\'{y},
Department of Mathematics and Statistics, Masaryk University,
Kotl\'{a}\v{r}sk\'{a}~2, 600~00~Brno, Czech Republic,
{\tt rosicky@math.muni.cz}.

}

\end{document}